\newcommand{\Z}{\mathbb{Z}}						
\newcommand{\R}{\mathbb{R}}						
\newcommand{\K}{\mathbb{K}}
\renewcommand{\phi}{\varphi}
\renewcommand{\S}{\mathbb{S}}					
\newcommand{\B}{\mathbb{B}}
\newcommand{\eps}{\varepsilon}					
\newcommand{\dd}								
	{\mathop{}\!\mathrm{d}}						
\newcommand{\ddn}[1]							
	{\mathop{}\!\mathrm{d^{#1}}}
\newcommand{\abs}[1]							
	{\left| #1 \right|}
\newcommand{\smallabs}[1]						
	{\lvert #1 \rvert}	
\newcommand{\norm}[1]							
	{\left\lVert #1 \right\rVert}	
\newcommand{\smallnorm}[1]						
	{\lVert #1 \rVert}						
\newcommand{\ip}[2]								
	{\left< #1 , #2 \right>}
\DeclareMathOperator{\vol}{vol}					
\DeclareMathOperator{\spt}{spt}
\DeclareMathOperator{\dist}{dist}					
\DeclareMathOperator{\adm}{adm}
\DeclareMathOperator{\ext}{ext}
\DeclareMathOperator{\diam}{diam}
\DeclareMathOperator{\im}{im}					
\newcommand{\loc}{\mathrm{loc}}					
\newcommand{\cH}{\mathcal{H}}
\newcommand{\cS}{\mathcal{S}}
\newcommand{\moddens}{\operatorname{Mod}}
\def\XXint#1#2#3{{\setbox0=\hbox{$#1{#2#3}{\int}$}
		\vcenter{\hbox{$#2#3$}}\kern-.5\wd0}}
\newtheorem{thm}{Theorem}[section]{\bf}{\it}
\newtheorem{lemma}[thm]{Lemma}
\newtheorem{prop}[thm]{Proposition}
\newtheorem{cor}[thm]{Corollary}
\theoremstyle{definition}
\theoremstyle{remark}
\newtheorem{rem}[thm]{Remark}
\numberwithin{equation}{section}
\begin{document}
	
\title[BBM and Sobolev Forms]{A Bourgain-Brezis-Mironescu -type characterization for Sobolev differential forms}
\author{Ilmari Kangasniemi}

\address{Department of Mathematical Sciences, University of Cincinnati, P.O.\ Box 210025, Cincinnati, OH 45221, USA}
\email{kangaski@ucmail.uc.edu}
	
\thanks{This work was partially supported by the NSF grant DMS-2247469.}

\subjclass[2020]{Primary 46E35, Secondary 53C65}
\keywords{Sobolev spaces, Sobolev differential forms, weak exterior derivative, fractional Sobolev spaces, Besov spaces, Sobolev--Slobodeckij, Bourgain--Brezis--Mironescu, Alexander--Spanier.}

\maketitle

\begin{abstract}
	Given a bounded domain $\Omega \subset \R^n$, a result by Bourgain, Brezis, and Mironescu characterizes when a function $f \in L^p(\Omega)$ is in the Sobolev space $W^{1,p}(\Omega)$ based on the limiting behavior of its Besov seminorms. We prove a direct analogue of this result which characterizes when a differential $k$-form $\omega \in L^p(\wedge^k T^* \Omega)$ has a weak exterior derivative $d\omega \in L^p(\wedge^{k+1} T^* \Omega)$, where the analogue of the Besov seminorm that our result uses is based on integration over simplices.
\end{abstract}

\section{Introduction}

Let $\Omega$ be a bounded domain in $\R^n$, let $p \in (1, \infty)$, and let $\theta \in (0, 1)$. The \emph{Besov seminorm} $\norm{f}_{B^{\theta}_{p,p}(\Omega)}$, also known as the \emph{Slobodeckij seminorm}, is defined for measurable functions $f \colon \Omega \to \R$ by
\begin{equation}\label{eq:Besov_def}
	\norm{f}_{B^{\theta}_{p,p}(\Omega)}^p = \int_{\Omega^2} \frac{\abs{f(y) - f(x)}^p}{\abs{x-y}^{n + \theta p}} \, dx \, dy.
\end{equation}
This seminorm leads to the definition of fractional Sobolev spaces $W^{\theta, p}(\Omega)$, also called \emph{Sobolev-Slobodeckij spaces}, which are equipped with the norm $\norm{f}_{L^p(\Omega)} + \norm{f}_{B^{\theta}_{p,p}(\Omega)}$.

If one sets $\theta = 1$ in \eqref{eq:Besov_def}, the resulting seminorm does not match the Sobolev seminorm $\norm{\nabla f}_{L^p(\Omega)}$ for weakly differentiable functions $f$. However, this issue can be rectified with a suitable extra coefficient. Indeed, in \cite[Corollary 2]{Bourgain-Brezis-Mironescu_Characterization}, Bourgain, Brezis, and Mironescu gave the following characterization of Sobo\-lev functions using the Besov seminorm.

\begin{thm}[\textbf{Bourgain--Brezis--Mironescu}]\label{thm:BBM}
	Let $\Omega$ be a bounded smooth domain in $\R^n$, let $p \in (1, \infty)$, and let $f \in L^p(\Omega)$. Then $f \in W^{1,p}(\Omega)$ if and only if $\limsup_{\theta \to 1^{-}} (1-\theta) \norm{f}_{B^{\theta}_{p,p}(\Omega)}^p$ is finite. Moreover, if this is the case, then we have
	\[
		\lim_{\theta \to 1^{-}} (1-\theta) \norm{f}_{B^{\theta}_{p,p}(\Omega)}^p
		= C(n, p) \norm{\nabla f}_{L^p(\Omega)}^p.
	\]
\end{thm}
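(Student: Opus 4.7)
The plan is to rewrite the Besov seminorm using the substitution $y = x + r\xi$ with $r > 0$ and $\xi \in S^{n-1}$, producing
\[
	\norm{f}_{B^{\theta}_{p,p}(\Omega)}^p = \int_{S^{n-1}} \int_0^{\diam \Omega} r^{-1-\theta p} I_f(r,\xi) \, \dd r \, \dd\sigma(\xi),
\]
where $I_f(r, \xi) = \int_{\Omega \cap (\Omega - r\xi)} \abs{f(x+r\xi) - f(x)}^p \dd x$, and then exploit the concentration of the weight $(1-\theta) r^{-1-\theta p}$ near $r = 0$ as $\theta \to 1^-$. The scalar identity $(1-\theta) \int_0^R r^{p(1-\theta)-1} \dd r = R^{p(1-\theta)}/p \to 1/p$ together with the tail estimate $(1-\theta) \int_\delta^{\diam \Omega} r^{-1-\theta p} \dd r \to 0$ for each fixed $\delta > 0$ reduces the problem to the behaviour of $I_f$ at small $r$.

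For the forward direction, I would reduce to smooth $f$ by density of $C^\infty(\overline{\Omega})$ in $W^{1,p}(\Omega)$, using smoothness of $\Omega$. For smooth $f$, the fundamental theorem of calculus combined with Jensen's inequality gives $\abs{f(x+r\xi) - f(x)}^p \le r^p \int_0^1 \abs{\nabla f(x+tr\xi) \cdot \xi}^p \dd t$, which together with the sphere identity $\int_{S^{n-1}} \abs{\xi \cdot v}^p \dd\sigma(\xi) = \kappa(n,p) \abs{v}^p$ produces the upper bound $\limsup (1-\theta) \norm{f}_{B^{\theta}_{p,p}(\Omega)}^p \le (\kappa(n,p)/p) \norm{\nabla f}_{L^p(\Omega)}^p$, with $C(n,p) = \kappa(n,p)/p$. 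To upgrade to equality I would split $r \in (0, \delta) \cup [\delta, \diam \Omega)$, kill the second piece with the tail estimate, and use the pointwise convergence $r^{-p} \abs{f(x+r\xi) - f(x)}^p \to \abs{\nabla f(x) \cdot \xi}^p$ on the first piece together with dominated convergence. A density argument based on a uniform bound for $f \mapsto (1-\theta)^{1/p} \norm{f}_{B^{\theta}_{p,p}(\Omega)}$ in the $W^{1,p}$-norm then extends the identity from smooth functions to all of $W^{1,p}(\Omega)$.

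For the converse, suppose $\liminf_{\theta \to 1^-}(1-\theta) \norm{f}_{B^{\theta}_{p,p}(\Omega)}^p < \infty$ for $f \in L^p(\Omega)$. I would mollify: for a standard mollifier $\rho_\eps$ set $f_\eps = f * \rho_\eps$ on $\Omega_\eps = \{x \in \Omega : \dist(x, \partial \Omega) > \eps\}$. Jensen's inequality applied to the convolution yields $(1-\theta) \norm{f_\eps}_{B^{\theta}_{p,p}(\Omega_\eps)}^p \le (1-\theta) \norm{f}_{B^{\theta}_{p,p}(\Omega)}^p$, so the forward-direction upper bound applied to the smooth $f_\eps$ gives $\norm{\nabla f_\eps}_{L^p(\Omega_\eps)} \le C$ uniformly in $\eps$. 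Since $f_\eps \to f$ in $L^p_{\loc}(\Omega)$ and $1 < p < \infty$, weak $L^p$-compactness supplies $g \in L^p(\Omega)$ with $\nabla f_\eps \rightharpoonup g$ along a subsequence; an integration by parts against any test vector field in $C^\infty_c(\Omega)$ then identifies $g$ as the distributional gradient of $f$.

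The main obstacle is upgrading the one-sided bound in the forward direction to the exact identity with the sharp constant $C(n,p)$: the crude FTC-plus-Jensen estimate yields only the $\limsup \le$ inequality, and producing the matching lower bound requires a careful localization combined with the pointwise Taylor asymptotic and dominated convergence. A secondary technical point is controlling the approximation error in the density step uniformly in $\theta$, so that passing to the limit in $\theta$ and in the approximation commutes.
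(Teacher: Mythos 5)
The paper does not prove Theorem~\ref{thm:BBM}: it is quoted from Bourgain--Brezis--Mironescu as a known result, and the paper's own arguments establish only the generalization to forms (Theorem~\ref{thm:BBM_for_Sobolev_forms_all_cases}). Your proposal is therefore best measured against the paper's proof of that generalization specialized to $k=0$, and it matches that proof in its skeleton: the polar-coordinate rewriting $y = x + r\xi$ together with the concentration of $(1-\theta)r^{p(1-\theta)-1}\,dr$ near $r=0$ is the core of Lemma~\ref{lem:only_close_pts_matter} and Proposition~\ref{prop:equiv_case_unbdd_regular}; the density reduction to nice representatives with a uniform-in-$\theta$ bound justifying the interchange of limits is Lemmas~\ref{lem:dom_conv_enabler}--\ref{lem:limit_switching_lemma}; and the converse via mollification plus Jensen plus weak $L^p$-compactness is exactly Lemma~\ref{lem:convolution_estimate} and the argument in Proposition~\ref{prop:BBM_case_conv}. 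The structural difference is that the paper first proves an $L^p$-norm \emph{identity} for arbitrary $L^p$ forms $\omega$ (Theorem~\ref{thm:equiv_of_norms_general}), and then reduces the BBM statement to it via Stokes' theorem ($dI_\omega = I_{d\omega}$); you instead prove the identity directly for the special form $\omega = \nabla f$ via the fundamental theorem of calculus plus Jensen, which is available only at $k=0$. Both routes are legitimate; the paper's factoring is what makes the higher-$k$ generalization work, and for $k=0$ the two are essentially the same computation.

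Two points in your sketch need repair. First, in the converse direction you write that the ``forward-direction \emph{upper} bound applied to the smooth $f_\eps$ gives $\norm{\nabla f_\eps}_{L^p(\Omega_\eps)} \le C$''; this is the wrong direction. The upper bound says $\limsup_\theta(1-\theta)\norm{f_\eps}_{B^\theta_{p,p}}^p \lesssim \norm{\nabla f_\eps}_{L^p}^p$ and cannot bound the gradient. What you need is the \emph{lower} bound, i.e.\ the inequality $C\norm{\nabla f_\eps}_{L^p(\Omega_\eps)}^p \le \liminf_\theta(1-\theta)\norm{f_\eps}_{B^\theta_{p,p}(\Omega_\eps)}^p$, which follows from the identity you established for smooth functions. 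Second, you invoke the smooth-function identity on $\Omega_\eps$, but $\Omega_\eps$ need not be a smooth domain, and your forward-direction argument as written (density of $C^\infty(\overline{\Omega})$, FTC along the full segment $[x, x+r\xi]$) relies on domain regularity. The standard fixes are either to observe that the lower-bound half of the identity for $C^1$ functions holds on arbitrary bounded open sets (e.g.\ by summing over a Vitali cover by balls, as the paper does in Proposition~\ref{prop:BBM_case_arbitrary}), or to work on a convex inner set $\Omega_\eps$ as the paper does via Lemma~\ref{lem:convex_approx}. Relatedly, the FTC-plus-Jensen step in the forward direction requires the segment $[x,x+r\xi]$ to stay where $f$ is differentiable; for non-convex smooth $\Omega$ you should spell out that you extend $f$ to a $W^{1,p}(\R^n)$ function and control the boundary contribution in the exact-limit step, since naively extending changes the constant on the right-hand side.
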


Note that in the above form, the characterization is not true without any regularity assumptions on the domain; see \cite[Remark 5]{Brezis_BBMSurvey}, where a counterexample is provided in a slit disk. However, with suitable adjustments to the Besov seminorm, it is possible to eliminate this need for regularity assumptions; see the recent works by Drelichman and Dur\'an \cite{DrelichmanDuran-BBMArbitrary} and Mohanta \cite{Mohanta_GeneralBBM}. The characterization has spurred a significant amount of follow-up research, especially in the recent few years; see e.g.\ \cite{Ponce_BBM, Nguyen_BBM-BV-Duke, Leoni-Spector_BBMGen, Leoni-Spector_BBMGen-Corrigendum, BrezisNguyen-BBMRevisited, Pinamonti-Squassina-Vecchi_Magnetic-BBM, Brezis-VanSchaftingen-Yung_BBM-like, Brezis-Seeger-VanSchafingen-Yung_RevisitingSobolev, GarofaloTralli_BBM-Carnot}. Moreover, the characterization has also provided one of the multiple available approaches to studying first order Sobolev theory on metric measure spaces; see e.g.\ \cite{Munnier-BBM_Metric, DiMarinoSquassina-MetricBBM, Gorny_BBM-Metric_EuclTangents, LahtiPinamontiZhou_MetricBBM}.

In this paper, we prove an analogue of Theorem \ref{thm:BBM} for Sobolev differential forms in $\R^n$. To this end, recall that if $\Omega \subset \R^n$ is open and $k \in \left\{0, 1, \dots, n-1\right\}$, a locally integrable measurable differential $k$-form $\omega \in L^1_\loc(\wedge^k T^*\Omega)$ has a \emph{weak exterior derivative} $d\omega \in L^1_\loc(\wedge^{k+1} T^*\Omega)$ if
\[
	\int_\Omega \omega \wedge d\eta = (-1)^{k+1} \int_\Omega d\omega \wedge \eta 
\]
for every compactly supported smooth test form $\eta \in C^\infty_0(\wedge^{n-k-1} T^* \Omega)$. Forms $\omega \in L^p(\wedge^k T^* \Omega)$ with an $L^p$-integrable weak exterior derivative $d\omega \in L^p(\wedge^{k+1} T^* \Omega)$ then form a Sobolev space, denoted $W^{d,p}(\wedge^k T^* \Omega)$. The spaces $W^{d,p}(\wedge^k T^* \Omega)$ generalize e.g.\ spaces of $L^p$-integrable vector fields with an $L^p$-integrable distributional curl or divergence. 

In many applications of Sobolev differential forms, the spaces $W^{d,p}(\wedge^k T^* \Omega)$ exhibit more useful behavior than the spaces $W^{1,p}(\wedge^k T^*\Omega)$ of $k$-forms with coefficients in $W^{1,p}(\Omega)$. This is for instance since if $\omega \in C^\infty_0(\wedge^k T^* \R^m)$ is a smooth compactly supported $k$-form, $f \in W^{1,p}(\Omega, \R^m)$ is Sobolev map with $p \ge k+1$, and $\Omega\subset \R^n$ is a bounded domain, then the pull-back $f^* \omega$ of $\omega$ is in $W^{d,p/(k+1)}(\wedge^k T^* \Omega)$, but is not necessarily in any space $W^{1, q}_\loc(\wedge^k T^* \Omega)$. For further exposition on the spaces $W^{d,p}(\wedge^k T^* \Omega)$, we refer readers to e.g.\ \cite{Iwaniec-Lutoborski, Iwaniec-Scott-Stroffolini}

\subsection{The main result in bounded convex domains}

Let $\Omega \subset \R^n$ be an open domain. We let $M^k(\Omega; \R)$ denote the space of \emph{measurable real-valued $k$-multifunctions} on $\Omega$; that is, an element $F \in M^k(\Omega; \R)$ is a measurable function $F \colon \Omega^{k+1} \to \R$. We then recall the \emph{Alexander-Spanier differential} $d \colon M^k(\Omega; \R) \to M^{k+1}(\Omega; \R)$ on multifunctions, which is defined for $F \in M^k(\Omega; \R)$ by
\begin{equation}\label{eq:AS_differential}
	(dF)(x_0, x_1, \dots, x_{k+1}) = \sum_{i = 0}^k (-1)^i F(x_0, x_1, \dots, x_{i-1}, x_{i+1}, \dots, x_{k+1})
\end{equation}

Next, if $\Omega \subset \R^n$ is convex and $\omega \in L^p(\wedge^k T^* \Omega)$, then we can redefine $\omega$ in a null-set to ensure that it is Borel measurable, and afterwards define an integration function
\[
	I_\omega \colon \Omega^{k+1} \to \R, \quad I_\omega(x_0, \dots, x_k) = \int_{\Delta(x_0, \dots, x_k)} \omega,
\]
where $\Delta(x_0, \dots, x_k)$ is the oriented $k$-simplex with corners at $x_i$. As we verify in Section \ref{sect:integration_function}, $I_\omega$ is well defined and finite-valued outside a null-set of $\Omega^{k+1}$, $I_\omega$ is Borel measurable, and changing $\omega$ in a null-set only changes $I_\omega$ in a null-set. The definition of $I_\omega$ can then be extended to non-convex domains $\Omega$ by zero extending $\omega$ outside $\Omega$. We observe that the Alexander-Spanier -differential of the integration function $I_\omega$ corresponds to integrating $\omega$ over the boundary of $\Delta(x_0, \dots, x_{k+1})$; that is, 
\[
	dI_\omega(x_0, \dots, x_{k+1}) = \int_{\partial \Delta(x_0, \dots, x_{k+1})} \omega, \quad x_0, \dots, x_{k+1} \in \Omega.
\]

With these definitions, we may state the basic form of our main result, which proves a direct analogue for Theorem \ref{thm:BBM} for the spaces $W^{d,p}(\wedge^k T^* \Omega)$ when $\Omega$ is convex. We discuss versions of this result without the convexity assumption later in the introduction.

\begin{thm}\label{thm:BBM_for_forms_convex}
	Let $\Omega \subset \R^n$ be an open, bounded, convex domain, let $p, \in (1, \infty)$, and let $\omega \in L^p(\wedge^k T^* \Omega)$. Then the following conditions are equivalent.
	\begin{enumerate}[label=(\roman*)]
		\item $\omega \in W^{d,p}(\wedge^k T^*\Omega)$; i.e., $\omega$ has an $L^p$-integrable weak exterior derivative $d\omega \in L^p(\wedge^{k+1} T^* \Omega)$.
		\item The integration function $I_\omega$ of $\omega$ satisfies
		\begin{equation}\label{eq:BBM_for_forms_limsup}
			\limsup_{\theta \to 1^{-}} \int_{\Omega^{k+2}} \frac{(1 - \theta)^{k+1} \abs{dI_\omega(x_0, \dots, x_{k+1})}^pdx_0 \ldots dx_{k+1}}{\left(\abs{x_1 - x_0} \abs{x_2 - x_0} \cdots \abs{x_{k+1} - x_0}\right)^{n + \theta p}}
			< \infty.
		\end{equation}
		\item The integration function $I_\omega$ of $\omega$ satisfies
		\begin{equation}\label{eq:BBM_for_forms_limit}
			\lim_{\theta \to 1^{-}} \int_{\Omega^{k+2}} \frac{(1 - \theta)^{k+1} \abs{dI_\omega(x_0, \dots, x_{k+1})}^pdx_0 \ldots dx_{k+1}}{\left(\abs{x_1 - x_0} \abs{x_2 - x_0} \cdots \abs{x_{k+1} - x_0}\right)^{n + \theta p}}
			< \infty;
		\end{equation}
		i.e.\ the limit exists and is finite.
	\end{enumerate}
	Moreover, if the above conditions hold, then the limit in \eqref{eq:BBM_for_forms_limit} is uniformly comparable with $\norm{d\omega}_{L^p(\Omega)}^p$, with comparison constants depending only on $p$, $n$, and $k$. More precisely, there exists a $K = K(p, k)$ for which the limit in \eqref{eq:BBM_for_forms_limit} equals $K \bigl\lVert\abs{d\omega}_{\S,p}\bigr\rVert_{L^p(\Omega)}^p$,
	where the norm $\abs{\alpha}_{\S,p}$ for $l$-covectors $\alpha \in \operatorname{Alt}_l(V)$ on an $n$-dimensional normed space $V$ is defined by
	\begin{equation}\label{eq:alt_pointwise_norm}
		\abs{\alpha}_{\S,p}^p = \int_{\{\abs{v_1} = \ldots = \abs{v_l} = 1\}} \abs{\alpha(v_1, \dots, v_l)}^p \, d\cH^{n-1}(v_1) \dots d\cH^{n-1}(v_l).
	\end{equation}
\end{thm}

Note that outside the extra assumption of convexity, the case $k = 0$ of this result matches the original characterization of Bourgain, Brezis, and Mironescu that was stated in Theorem \ref{thm:BBM}. Indeed, a $0$-form is a function $f \colon \Omega \to \R$. For such a function $f$, we have $I_{f}(x) = f(x)$, and thus $dI_{f}(x, y) = f(y) - f(x)$. Hence, \eqref{eq:BBM_for_forms_limit} equals $\lim_{\theta \to 1^{-}} (1-\theta) \norm{f}_{B^{\theta}_{p,p}(\Omega)}^p$ for $f$. 

We also note that for $1$-covectors $\alpha$ on $\R^n$, the norm $\abs{\alpha}_{\S,p}$ given in Theorem \ref{thm:equiv_of_norms_general} is a constant multiple of the usual Euclidean norm $\abs{\alpha}$. Indeed, if $\alpha$ is nonzero, then one can write $\abs{\alpha}_{\S,p} = \abs{\alpha} \cdot \bigl\lvert\abs{\alpha}^{-1}\alpha\bigr\rvert_{\S,p}$, where the quantity $\bigl\lvert\abs{\alpha}^{-1}\alpha\bigr\rvert_{\S,p}$ is independent of $\alpha$ by the rotational symmetry of $\S^{n-1}$. For $k > 1$, this argument is no longer available due to the existence of non-simple $k$-covectors $\alpha \in \operatorname{Alt}_k(V)$ which cannot be written in the form $\alpha = \alpha_1 \wedge \dots \wedge \alpha_k$ with $\alpha_i \in V^*$.

\subsection{$L^p$-theory of multifunctions.}

The quantity in \eqref{eq:BBM_for_forms_limsup} can be used to define an $L^p$-seminorm on $k$-multifunctions. Namely, if $F \in M^k(\Omega; \R)$ and if $E \subset \Omega$ is measurable, we define
\begin{equation}\label{eq:Lp_for_multifunct_def}
	\norm{F}_{L^p M^k(E)}^p\\ 
	= \limsup_{\theta \to 1^{-}} \int_{E^{k+1}} \frac{(1 - \theta)^k \abs{F(x_0, \dots, x_k)}^pdx_0 \ldots dx_k}{\left(\abs{x_1 - x_0} \abs{x_2 - x_0} \cdots \abs{x_k - x_0}\right)^{n + \theta p}}.
\end{equation}
It follows that $\norm{\cdot}_{L^p M^k(E)}$ defines a $[0, \infty]$-valued seminorm on $M^k(\Omega; \R)$, and multifunctions $F \in M^k(\Omega; \R)$ with $\norm{F}_{L^p M^k(E)} < \infty$ can therefore be thought of as being $L^p$-integrable over $E$.

By Theorem \ref{thm:BBM_for_forms_convex}, if $\omega \in W^{d,p}(\wedge^k T^* \Omega)$ on a convex $\Omega$, then $\norm{dI_\omega}_{L^p M^{k+1}(\Omega)}$ and $\norm{d\omega}_{L^p(\Omega)}$ are uniformly comparable. This is also true for $\norm{I_\omega}_{L^p M^k(\Omega)}$ and $\norm{\omega}_{L^p(\Omega)}$, as we point out in the following theorem.

\begin{thm}\label{thm:equiv_of_norms_basic}
	Let $\Omega \subset \R^n$ be open and bounded, let $p \in [1, \infty)$, $k \in \{0, \dots, n\}$, and let $\omega \in L^p(\wedge^k T^* \Omega)$, where we extend $\omega$ to $L^p(\wedge^k T^* \R^n)$ by setting $\omega = 0$ outside $\Omega$. Then, 
	\[
		\norm{I_\omega}_{L^p M^k(\Omega)} 
		= \lim_{\theta \to 1^{-}} \left( \int_{\Omega^{k+1}} \frac{(1 - \theta)^k \abs{F(x_0, \dots, x_k)}^pdx_0 \ldots dx_k}{\left(\abs{x_1 - x_0} \abs{x_2 - x_0} \cdots \abs{x_k - x_0}\right)^{n + \theta p}} \right)^\frac{1}{p}.
	\]
	Moreover, there exists a constant $K = K(n, p)$ such that $\norm{I_\omega}_{L^p M^k(\Omega)} = K \bigl\lVert\abs{\omega}_{\S,p}\bigr\rVert_{L^p(\Omega)}$, where $\abs{\alpha}_{\S,p}$ is as in \eqref{eq:alt_pointwise_norm}. In particular, $\norm{I_\omega}_{L^p M^k(\Omega)}$ is uniformly comparable with $\norm{\omega}_{L^p(\Omega)}$, with comparison constants depending only on $n$, $p$, and $k$.
\end{thm}

We note that just like differential forms act as the basis of de Rham cohomology, multifunctions act as the basis for the classical theory of Ale\-xan\-der-Spanier cohomology. Recall that, given a topological space $X$, a ring of coefficients $\K$, and $k \in \Z$, the space $C^k_{\text{AS}}(X; \K)$ of $\K$-valued \emph{Alexander-Spanier $k$-cochains} is the quotient of the space of all $k$-multifunctions $F \colon X^{k+1} \to \K$ with an equivalence relation $\sim$, where $F \sim G$ if $F\vert_U = G\vert_U$ for some neighborhood $U \subset X^{k+1}$ of the diagonal $\Delta_k(X) = \{(x, \dots, x) : x \in X\} \subset X^{k+1}$. The differential $d$ from \eqref{eq:AS_differential} then descends to the quotient space, yielding the Alexander-Spanier complex
\[
\dots \xrightarrow{d} C^{k-1}_{\text{AS}}(X; \K)
\xrightarrow{d} C^{k}_{\text{AS}}(X; \K)
\xrightarrow{d} C^{k+1}_{\text{AS}}(X; \K)
\xrightarrow{d} \dots
\]
The cohomology spaces $\ker d / \im d$ of this complex form the \emph{Alexander-Spanier cohomology} of the space $X$; see e.g.~\cite{Spanier_AlgTopoBook} or~\cite{Massey_ASBook} for details. Through use of sheaf theory, it can be shown that Alexander-Spanier cohomology often coincides with other cohomology theories, such as the commonly used singular cohomology, or in the case $\K = \R$, the de Rham cohomology of differential forms; for details, see e.g.\ \cite[Sections I.7, III.2]{Bredon_book}.

Notably, there is a sense of similarity between the equivalence relation $\sim$ used in the definition of Alexander-Spanier cohomology, and the equivalence relation used to construct the associated normed space of the seminorm \eqref{eq:Lp_for_multifunct_def}. This similarity is most clear in the case $k = 1$; for instance, by Remark \ref{rem:diagonal behavior}, if $\Omega$ is a bounded domain, $F, G \in M^1(\Omega; \R)$ with $\norm{F}_{L^p M^1(\Omega)} < \infty$ and $\norm{G}_{L^p M^1(\Omega)} < \infty$, and $F\vert_U = G\vert_U$ for some uniform neighborhood $U$ of the diagonal $\Delta_1(\Omega) \subset \Omega^2$, then $\norm{F - G}_{L^p M^1(\Omega)} = 0$. For $k > 1$, a simple rigorous observation like this becomes more elusive, but since Theorem \ref{thm:BBM_for_forms_convex} indicate that this does hold for $k$-multifunctions of the form $I_\omega$ with $\omega \in L^p(\wedge^k T^* \Omega)$, there could potentially be a larger natural class of measurable $k$-multifunctions where behavior of this type holds.

Previously, $L^p$-norms for multifunctions, including their associated Ale\-xan\-der-Spanier complex, have been studied by e.g.\ Bartholdi, Schick, Smale and Smale \cite{Bartholdi-Schick-Smale-Smale_AS-Hodge-theory}, Genton \cite{Genton_thesis-Lp-Alexander-Spanier}, and Hinz and Kommer \cite{Hinz-Kommer_non-local-AS}. These works generally rely on restricting oneself to fixed scales to achieve the desired topological properties of the complex. The works \cite{Bartholdi-Schick-Smale-Smale_AS-Hodge-theory, Genton_thesis-Lp-Alexander-Spanier} focus on norms that are uniformly comparable to a standard $L^p$-norm on $\Omega^{k+1}$, but \cite{Hinz-Kommer_non-local-AS} considers $L^p$-norms on $\Omega^{k+1}$ involving unbounded kernels. The seminorm \eqref{eq:Lp_for_multifunct_def} is partially inspired by the strategies of using iterated kernels in \cite{Bartholdi-Schick-Smale-Smale_AS-Hodge-theory, Hinz-Kommer_non-local-AS}, with the main difference being the additional limiting process of kernels.

A notable property of the seminorm $\norm{\cdot}^p_{L^p M^k(\Omega)}$, along with its variants that we introduce later, is that it can be expressed using purely metric data. Thus, there is potential for a seminorm like it to act as a basis for an approach to studying Sobolev-regular differential forms in the metric setting, though there remain significant challenges in building such a theory, such as finding a metric approach to specifying which multifunctions arise from integrating a differential form. Current approaches to differential forms in the metric setting include the non-smooth calculus on RCD spaces developed by Gigli, see e.g.\ \cite{Gigli-Pasqualetto_RCD-DG-book}, and the polylipschitz forms of Pankka and Soultanis \cite{Pankka-Soultanis_Polylipschitz2} which act as a pre-dual to Ambrosio-Kirchheim currents \cite{Ambrosio-Kirchheim_currents-acta} and most closely correspond to the space $W^{d, \infty}(\wedge^k T^* \Omega)$.

\subsection{Versions of Theorem \ref{thm:BBM_for_forms_convex} for non-convex domains.}

Without making any adjustments to the domain of integration in \eqref{eq:Besov_def}, a typical level of generality for more recent formulations of Theorem \ref{thm:BBM} is in domains allowing for extension of Sobolev maps. In our case, we are similarly able to generalize \ref{thm:BBM_for_forms_convex} to such a setting without modifying the seminorm $\norm{\cdot}_{L^p M^k(\Omega)}$, but this result is less direct than the one given in Theorem \ref{thm:BBM_for_forms_convex}. 

For the statement, we say that a bounded domain $\Omega \subset \R^n$ is a \emph{(weak) $W^{d,p}(\wedge^k T^* \Omega)$-extension domain} if for every $\omega \in W^{d,p}(\wedge^k T^* \Omega)$, there exists a $\omega_{\ext} \in W^{d,p}(\wedge^k T^* \R^n)$ such that the restriction $\omega_{\ext}\vert_{\Omega}$ of $\omega_{\ext}$ to $\Omega$ is $\omega$. By an argument based on reflection across the boundary, it follows that bounded Lipschitz domains $\Omega \subset \R^n$ are $W^{d,p}(\wedge^k T^* \Omega)$-extension domains; see e.g.\ the discussion in \cite[p. 48]{Iwaniec-Scott-Stroffolini}.

\begin{restatable}{thm}{BBMFormsNonconv}\label{thm:BBM_for_Sobolev_forms_nonconvex}
	Let $\Omega \subset \R^n$ be a bounded domain contained in an open ball $B \subset \R^n$, let $p \in (1, \infty)$, and let $\omega \in L^p(\wedge^k T^* \Omega)$ be a Borel measurable differential $k$-form. Then the following results hold.  
	\begin{enumerate}[label=(\roman*)]
		\item \label{enum:BBM_to_Sobolev} If 
		\[
			\liminf_{\theta \to 1^{-}} \int_{\Omega^{k+2}} \frac{(1 - \theta)^{k+1} \abs{dI_\omega(x_0, \dots, x_{k+1})}^pdx_0 \ldots dx_{k+1}}{\left(\abs{x_1 - x_0} \abs{x_2 - x_0} \cdots \abs{x_{k+1} - x_0}\right)^{n + \theta p}}
			< \infty,
		\]
		then we have $\omega \in W^{d,p}(\wedge^k T^* \Omega)$.
		\item \label{enum:Sobolev_to_BBM} If $\Omega$ is a weak $W^{d,p}(\wedge^k T^* \Omega)$-extension domain and $\omega \in W^{d,p}(\wedge^k T^* \Omega)$, then we have $\norm{dI_{\omega_{\ext}}}_{L^p M^{k+1}(\Omega)} < \infty$, where $\omega_{\ext} \in W^{d,p}(\wedge^k T^* B)$ is a $W^{d,p}$-extension of $\omega$ to $B$.
	\end{enumerate}
\end{restatable}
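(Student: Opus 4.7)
The plan is to reduce both parts to case \eqref{enum:BBM_case_bdd_conv} of Theorem \ref{thm:BBM_for_Sobolev_forms_all_cases}. For part \eqref{enum:Sobolev_to_BBM}, the easy direction, I would apply that case to $\omega_{\ext} \in W^{d,p}(\wedge^k T^* B)$ on the bounded open convex ball $B$, yielding $\norm{dI_{\omega_{\ext}}}_{L^p M^{k+1}(B)} < \infty$. Since $\Omega \subset B$ and the integrand defining $\norm{\cdot}_{L^p M^{k+1}}$ is non-negative, restricting the integration from $B^{k+2}$ to $\Omega^{k+2}$ can only decrease the resulting quantity, so $\norm{dI_{\omega_{\ext}}}_{L^p M^{k+1}(\Omega)} \leq \norm{dI_{\omega_{\ext}}}_{L^p M^{k+1}(B)} < \infty$.

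For part \eqref{enum:BBM_to_Sobolev}, I would first establish $\omega \in W^{d,p}_\loc(\wedge^k T^* \Omega)$. For any bounded open convex $U \subset \Omega$, the non-negativity of the integrand gives $[dI_\omega]_{L^p M^{k+1}(U)} \leq [dI_\omega]_{L^p M^{k+1}(\Omega)} < \infty$, while convexity of $U$ ensures that $I_\omega$ on $U^{k+1}$ coincides with $I_{\omega|_U}$ (all simplices remain in $U$). Case \eqref{enum:BBM_case_bdd_conv} of Theorem \ref{thm:BBM_for_Sobolev_forms_all_cases} then yields $\omega|_U \in W^{d,p}(\wedge^k T^* U)$. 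Covering $\Omega$ by a locally finite family of convex open balls and gluing via a partition of unity, together with the uniqueness of weak exterior derivatives on overlaps, produces a globally defined $d\omega \in L^p_\loc(\wedge^{k+1} T^* \Omega)$ that serves as the weak exterior derivative of $\omega$ on $\Omega$.

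The main obstacle is upgrading this to a global $L^p$ bound on $d\omega$: naive summation over an overlapping convex cover gives only trivial bounds, and $\Omega$ need not admit any exhaustion by convex open subsets. My plan is to exploit disjointness via a Whitney decomposition $\Omega = \bigcup_i Q_i$ into essentially disjoint closed cubes $Q_i \subset \Omega$. Set $V_j = \bigsqcup_{i=1}^j \intr Q_i$ and let $g_\theta$ denote the non-negative integrand in the definition of $[dI_\omega]_{L^p M^{k+1}(\Omega)}^p$ from \eqref{eq:Lp_for_multifunct_def_liminf}. The disjointness of the subsets $(\intr Q_i)^{k+2} \subset \Omega^{k+2}$ yields
\[
\sum_{i=1}^j \int_{(\intr Q_i)^{k+2}} g_\theta \;\leq\; \int_{\Omega^{k+2}} g_\theta
\]
for every $\theta \in (0,1)$. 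The previous step ensures $\omega|_{\intr Q_i} \in W^{d,p}(\wedge^k T^* \intr Q_i)$, so the ``obtained as a limit'' part of Theorem \ref{thm:BBM_for_Sobolev_forms_all_cases} applied on each cube gives $\lim_{\theta \to 1^-} \int_{(\intr Q_i)^{k+2}} g_\theta = K \int_{\intr Q_i} |d\omega|_{\S,p}^p$. Taking $\liminf_{\theta \to 1^-}$ of the displayed inequality and using that a finite sum of convergent sequences converges to the sum of limits, I obtain $K \int_{V_j} |d\omega|_{\S,p}^p \leq [dI_\omega]_{L^p M^{k+1}(\Omega)}^p$. Monotone convergence as $j \to \infty$, together with the fact that $\bigsqcup_i \intr Q_i$ has full measure in $\Omega$ and that $|\cdot|_{\S,p}$ is equivalent to any standard norm on the finite-dimensional space $\operatorname{Alt}_{k+1}(\R^n)$, then produces $d\omega \in L^p(\wedge^{k+1} T^* \Omega)$ and hence $\omega \in W^{d,p}(\wedge^k T^* \Omega)$.
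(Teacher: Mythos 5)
Your proposal is correct. For part \ref{enum:Sobolev_to_BBM} it coincides with the paper's argument: apply the bounded-convex case of Theorem \ref{thm:BBM_for_Sobolev_forms_all_cases} to $\omega_{\ext}$ on $B$ and then restrict the domain of integration from $B^{k+2}$ down to $\Omega^{k+2}$. For part \ref{enum:BBM_to_Sobolev}, however, you take a genuinely different route. The paper's proof is a one-liner: since $[dI_\omega]_{L^p M^{k+1,c}(\Omega)} \le [dI_\omega]_{L^p M^{k+1}(\Omega)} < \infty$, the claim follows from case \eqref{enum:BBM_case_bdd} of Theorem \ref{thm:BBM_for_Sobolev_forms_all_cases}. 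The real work there is already packaged in Proposition \ref{prop:BBM_case_arbitrary}, which establishes $W^{d,p}_\loc$ locally on convex balls and then obtains the global $L^p$ bound on $d\omega$ via a Vitali covering of $\Omega$ by pairwise disjoint balls whose radii are tied to $\dist(x,\partial\Omega)$ --- a constraint that is unavoidable because that proposition works with the $M^{k+1,c}$-seminorm. You instead bypass the $M^{k+1,c}$-seminorm entirely, relying only on the bounded-convex case (Proposition \ref{prop:BBM_case_conv} / Corollary \ref{cor:BBM_case_conv_bdd}): since the full $M^{k+1}(\Omega)$-seminorm imposes no size constraint relative to $\partial\Omega$, any essentially disjoint exhaustion by bounded convex open sets works, and Whitney cubes do the job without the size bookkeeping. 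Both arguments rest on the same two pillars --- the ``obtained as a limit'' identity on each convex piece, and passing the $\liminf$ through a sum over disjoint pieces --- but you handle the latter via finite sums and monotone convergence, which actually avoids the superadditivity-of-$\liminf$ step used in Proposition \ref{prop:BBM_case_arbitrary}. The trade-off: the paper reuses machinery it built anyway for the other cases of the theorem, whereas your argument re-derives a covering estimate from scratch but is conceptually more self-contained. One cosmetic slip: when passing from seminorms to $p$-th powers, the constant should appear as $K^p$ rather than $K$ in the intermediate displays.
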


The statement of Theorem \ref{thm:BBM_for_Sobolev_forms_nonconvex} matches Theorem \ref{thm:BBM_for_forms_convex} if the seminorm $\norm{dI_\omega}_{L^p M^k(\Omega)}$ defined using the zero extension of $\omega$ agrees with the seminorm $\norm{dI_{\omega_{\ext}}}_{L^p M^k(\Omega)}$ defined using a $W^{d,p}$-extension of $\omega$. This is true for functions $f \in W^{1,p}(\Omega)$, since if $(x, y) \in \Omega^2$, then $dI_f(x,y) = f(y) - f(x)$ depends only on the values of $f$ on $\Omega$. However, for higher order forms, a simplex with corners in $\Omega$ can have its boundary exit $\Omega$, and the question thus becomes more delicate. This difference is the main reason why convexity matters in the cases $k > 0$, but does not matter in the classical case $k = 0$.

However, there is a way to get an exact analogue of Theorem \ref{thm:BBM_for_forms_convex} on non-convex domains, which is by adjusting the domain of integration in the seminorm $\norm{\cdot}_{L^p M^k(\Omega)}$. First, a common method to eliminate the boundedness assumption in Theorem \ref{thm:BBM} is to replace the set of integration $\Omega^2$ in \eqref{eq:Besov_def} with $\{(x,y) \in \Omega^2 : \abs{x - y} \le R\}$ for some $R > 0$. Moreover, in \cite{DrelichmanDuran-BBMArbitrary}, Drelichman and Dur\'an generalized Theorem \ref{thm:BBM} to general bounded open $\Omega \subset \R^n$, by replacing the set of integration $\Omega^2$ in \eqref{eq:Besov_def} with $\{(x,y) \in \Omega^2 : \abs{x - y} < c \dist(x, \partial \Omega)\}$ for some $c \in (0, 1)$, and Mohanta \cite{Mohanta_GeneralBBM} further managed to eliminate the boundedness assumption in their result. With higher order analogues to these methods, we are able to obtain a direct generalization of Theorem \ref{thm:BBM_for_forms_convex} for unbounded non-convex domains.

To state this generalization, we define three variants of the seminorm $\norm{\cdot}_{L^p M^k(E)}$. For the first variant, if $E \subset \Omega$ is measurable, $k \in \Z_{\ge 0}$, and $R > 0$, we denote
\[
	\B(E, k, R) = \bigl\{(x_0, \dots, x_k) \in E^{k+1} : \max_{i = 1, \dots, k} \abs{x_0 - x_k} < R\bigr\}.
\]
Then, for $F \in M^k(\Omega; \R)$, we define that
\begin{multline}\label{eq:Lp_for_multifunct_def_loc}
	\norm{F}_{L^p M^k(E\mid R)}^p\\
	= \limsup_{\theta \to 1^{-}} \int_{\B(E, k, R)} \frac{(1 - \theta)^k \abs{F(x_0, \dots, x_k)}^p dx_0 \ldots dx_k}{\left(\abs{x_1 - x_0} \abs{x_2 - x_0} \cdots \abs{x_k - x_0}\right)^{n + \theta p}}.
\end{multline}
For the second variant, for every $c > 0$ and every measurable $E \subset \Omega$, we define a set
\[
	E(k, c) := \bigl\{ (x_0, \dots, x_k) \in E^{k+1} : \max_{i = 1, \dots, k} \abs{x_0 - x_k} < c \dist(x_0, \partial E) \bigr\}.
\]
We then obtain an alternate localized seminorm by
\begin{equation}\label{eq:Lp_for_multifunct_def_alt}
	\norm{F}_{L^p M^{k, c}(E)}^p\\ 
	= \limsup_{\theta \to 1^{-}} \int_{E(k, c)} \frac{(1 - \theta)^k \abs{F(x_0, \dots, x_k)}^p dx_0 \ldots dx_k}{\left(\abs{x_1 - x_0} \abs{x_2 - x_0} \cdots \abs{x_k - x_0}\right)^{n + \theta p}}.
\end{equation}
The final variant performs both localizations at the same time: if $R > 0$ and $c > 0$, then we denote
\begin{multline}\label{eq:Lp_for_multifunct_def_alt_loc}
	\norm{F}_{L^p M^{k, c}(E\mid R)}^p\\
	= \limsup_{\theta \to 1^{-}} \int_{\B(E, k, R) \cap E(k, c)} \frac{(1 - \theta)^k \abs{F(x_0, \dots, x_k)}^p dx_0 \ldots dx_k}{\left(\abs{x_1 - x_0} \abs{x_2 - x_0} \cdots \abs{x_k - x_0}\right)^{n + \theta p}}.
\end{multline}

For the sake of notational convenience, for all of the seminorms we defined above in \eqref{eq:Lp_for_multifunct_def}-\eqref{eq:Lp_for_multifunct_def_alt_loc}, we define counterparts $[\cdot]_{L^p M^k(E)}$, $[\cdot]_{L^p M^k(E\mid R)}$, $[\cdot]_{L^p M^{k, c}(E)}$, and $[\cdot]_{L^p M^{k, c}(E\mid R)}$, where the $\limsup$ in the definition is replaced with a $\liminf$. For instance,
\begin{equation}\label{eq:Lp_for_multifunct_def_liminf}
	[F]_{L^p M^k(E)}^p
	= \liminf_{\theta \to 1^{-}} \int_{E^{k+1}} \frac{(1 - \theta)^k \abs{F(x_0, \dots, x_k)}^p dx_0 \ldots dx_k}{\left(\abs{x_1 - x_0} \abs{x_2 - x_0} \cdots \abs{x_k - x_0}\right)^{n + \theta p}}.
\end{equation}
Moreover, if $F \in M^k(\Omega; \R)$, we say that $\norm{F}_{L^p M^k(E)}$ is \emph{obtained as a limit} if $[F]_{L^p M^k(E)} = \norm{F}_{L^p M^k(E)}$, and similarly for the other seminorms \eqref{eq:Lp_for_multifunct_def_loc}-\eqref{eq:Lp_for_multifunct_def_alt_loc}.

With this notation, the full technical version of Theorem \ref{thm:BBM_for_forms_convex} that also includes non-convex cases is as follows.

\begin{thm}\label{thm:BBM_for_Sobolev_forms_all_cases}
	Let $\Omega \subset \R^n$, $R > 0$, $c \in (0, 1]$, $p, \in (1, \infty)$, $k \in \{0, \dots, n-1\}$, and let $\omega \in L^p(\wedge^k T^* \Omega)$, where we extend $\omega$ to $L^p(\wedge^k T^* \R^n)$ by setting $\omega = 0$ outside $\Omega$. Let $\norm{\cdot}_{p,k+1}$ be one of the seminorms \eqref{eq:Lp_for_multifunct_def}-\eqref{eq:Lp_for_multifunct_def_alt_loc} over $\Omega$, and let $[\cdot]_{p,k+1}$ be the corresponding $\liminf$ -version. Moreover, we assume that
	\begin{enumerate}
		\item if $\norm{\cdot}_{p,k+1} = \norm{\cdot}_{L^p M^{k+1}(\Omega)}$, then $\Omega$ is open, bounded, and convex;\label{enum:BBM_case_bdd_conv}
		\item if $\norm{\cdot}_{p,k+1} = \norm{\cdot}_{L^p M^{k+1}(\Omega\mid R)}$, then $\Omega$ is open and convex;\label{enum:BBM_case_conv}
		\item if $\norm{\cdot}_{p,k+1} = \norm{\cdot}_{L^p M^{k+1,c}(\Omega)}$, then $\Omega$ is open and bounded.\label{enum:BBM_case_bdd}
		\item if $\norm{\cdot}_{p,k+1} = \norm{\cdot}_{L^p M^{k+1,c}(\Omega\mid R)}$, then $\Omega$ is open.\label{enum:BBM_case_unbdd}
	\end{enumerate}
	Then the following conditions are equivalent:
	\begin{enumerate}[label=(\roman*)]
		\item $\omega \in W^{d,p}(\wedge^{k} T^*\Omega)$; \label{enum:BBM_sob}
		\item $\norm{dI_\omega}_{p, k+1} < \infty$; \label{enum:BBM_sup}
		\item $[dI_\omega]_{p,k+1} < \infty$. \label{enum:BBM_inf}
	\end{enumerate}
	Moreover, if $\omega \in W^{d,p}(\wedge^{k} T^*\Omega)$, then we have $[dI_\omega]_{p, k+1} = \norm{dI_\omega}_{p, k+1} = K \bigl\lVert\abs{d\omega}_{\S,p}\bigr\rVert_{L^p(\Omega)}$,
	where $K = K(p, n, k)$ and $\abs{\cdot}_{\S,p}$ are as in Theorem \ref{thm:BBM_for_forms_convex}.
\end{thm}

We also obtain a similar more general analogue to Theorem \ref{thm:equiv_of_norms_basic}.

\begin{thm}\label{thm:equiv_of_norms_general}
	Let $\Omega \subset \R^n$ be open, let $R > 0$, $c > 0$, $p \in [1, \infty)$, $k \in \{0, \dots, n\}$, and let $\omega \in L^p(\wedge^k T^* \Omega)$, where we extend $\omega$ to $L^p(\wedge^k T^* \R^n)$ by setting $\omega = 0$ outside $\Omega$. Let $\norm{\cdot}_{p,k}$ be one of the seminorms \eqref{eq:Lp_for_multifunct_def}-\eqref{eq:Lp_for_multifunct_def_alt_loc} over $\Omega$, and let $[\cdot]_{p,k}$ be the corresponding $\liminf$ -version. Moreover, if $\norm{\cdot}_{p,k} = \norm{\cdot}_{L^p M^k(\Omega)}$ or $\norm{\cdot}_{p,k} = \norm{\cdot}_{L^p M^{k,c}(\Omega)}$, we additionally assume that $\Omega$ is bounded. Then $[I_\omega]_{p,k} = \norm{I_\omega}_{p,k} = K \bigl\lVert\abs{\omega}_{\S,p}\bigr\rVert_{L^p(\Omega)}$, where $K = K(p, n, k)$ and $\abs{\cdot}_{\S,p}$ are as in Theorem \ref{thm:equiv_of_norms_general}. 
\end{thm}

We note that when $k = 0$, the case \eqref{enum:BBM_case_bdd} of Theorem \ref{thm:BBM_for_Sobolev_forms_all_cases} matches the result shown by Drelichman and Dur\'an in \cite{DrelichmanDuran-BBMArbitrary}, with a miniscule improvement of also allowing for $c \in (0, 1]$ instead of $c \in (0, 1)$. However, for the unbounded version of this result shown in case \ref{enum:BBM_case_unbdd}, there is a discrepancy between the version for general $k$ that we show and the version for $k = 0$ which follows from the work of Mohanta \cite{Mohanta_GeneralBBM}. This is because if $k = 0$ and $f \in L^p(\Omega)$, then by the argument given in \cite[Proposition 17]{Mohanta_GeneralBBM}, one always has $\norm{dI_f}_{L^p M^{1,c}(\Omega)} = \norm{dI_f}_{L^p M^{1,c}(\Omega\mid R)}$. However, for $k > 0$, this argument runs into issues, and it is not clear whether one has $\norm{dI_\omega}_{L^p M^{k+1,c}(\Omega)} = \norm{dI_\omega}_{L^p M^{k+1,c}(\Omega\mid R)}$ for $\omega \in L^p(\wedge^k T^* \Omega)$.

\subsection*{Acknowledgments}

The author thanks Josh Kline, Panu Lahti, Pekka Pankka, and Nages\-wari Shanmugalingam for several useful discussions on the topic.

\section{Preliminaries}

We use $C(a_1, \dots, a_k)$ to denote a positive constant that depends on the parameters $a_i$; the value of $C$ can change in each estimate even if the parameters remain the same. We also use the shorthand $A_1 \lesssim_{a_1, \dots, a_k} A_2$ for $A_1 \le C(a_1, \dots, a_k) A_2$. For functions with multiple variables, we use $\hat{x_i}$ to denote an omitted variable: for example $F(x_1, \dots, \hat{x_3}, \dots, x_5) = F(x_1, x_2, x_4, x_5)$.

We use $\B^n(x, r)$ to denote an open ball in $\R^n$ centered at $x \in \R^n$ with radius $r$. The unit ball has the abbreviation $\B^n := \B^n(0, 1)$. Moreover, if $E \subset \R^n$ is a set, we use $\B^n(E, r)$ to denote the set of points $x \in \R^n$ with $\dist (x, E) < \eps$. We let $m_n$ and $\cH^{n}$ denote the $n$-dimensional Lebesgue and Hausdorff measure, respectively. We use $\vol_n$ to denote the volume form in $\R^n$. At times, we write $dx$ instead of $dm_n(x)$ for brevity, where $x \in \R^n$ is a variable of integration; this notation is not to be confused with the exterior derivative of a differential form, as here $x$ cannot be interpreted as a 0-form when $n > 1$.

\subsection{$L^p$ and Sobolev differential forms}

We begin by recalling the basics of Sobolev differential forms, restricting ourselves to the Euclidean setting. As stated in the introduction, for further information on the topic, we refer the reader to e.g.\ \cite{Iwaniec-Lutoborski, Iwaniec-Scott-Stroffolini}.

Given an open set $\Omega \subset \R^n$ and $k \in \{0, \dots, n\}$, a differential $k$-form on $\Omega$ is a section $\omega \mapsto \wedge^k T^*\Omega$ of the $k$:th exterior bundle of $\Omega$. Since we are in the Euclidean setting, any differential form $\omega$ on $\Omega$ can be uniquely written as
\[
	\omega = \sum_{I} \omega_I dx_{I_1} \wedge \dots \wedge dx_{I_k},
\]
where $I = (I_1, \dots, I_k)$ range over $k$-tuples of elements in $\{1, \dots, n\}$ with $I_1 < I_2 < \dots < I_k$, and $\omega_I \colon \Omega \to \R$ are real-valued functions. Recall that differential $k$-forms have a point-wise norm, which in the Euclidean setting can be stated as just
\[
	\abs{\omega_x} = \sqrt{\sum\nolimits _I \abs{\omega_I(x)}^2}
\]
for all $x \in \Omega$.

We say that a differential form $\omega$ is \emph{(Lebesgue) measurable} if all the coefficient functions $\omega_I$ are (Lebesgue) measurable functions. Similarly we define Borel, continuous, and $C^l$-smooth differential forms $\omega$ by requiring that all $\omega_I$ are Borel, continuous, or $C^l$-smooth, respectively. We denote the spaces of continuous, and $C^l$-smooth differential $k$-forms on $\Omega$ by $C(\wedge^k T^* \Omega)$ and $C^l(\wedge^k T^* \Omega)$, respectively. We also define compactly supported counterparts, which we denote by $C_0(\wedge^k T^* \Omega)$ and $C^l_0(\wedge^k T^* \Omega)$, respectively. 

Given $p \in [1, \infty]$, we let $L^p(\wedge^k T^* \Omega)$ denote the space of equivalence classes of measurable differential forms $\omega$ such that $\abs{\omega} \in L^p(\Omega)$. As usual, we consider two $k$-forms in $L^p(\wedge^k T^* \Omega)$ equal if they agree outside a null-set of $\Omega$, and equip the space $L^p(\wedge^k T^* \Omega)$ with the norm $\norm{\omega}_{L^p(\Omega)} = \norm{\abs{\omega}}_{L^p(\Omega)}$. We also use $L^p_\loc(\wedge^k T^* \Omega)$ to denote the space of measurable $k$-forms $\omega$ with $\abs{\omega} \in L^p_\loc(\Omega)$.   

Moreover, as stated in the introduction, we use $W^{d,p}(\wedge^k T^* \Omega)$ to denote the space of $\omega \in L^p (\wedge^k T^* \Omega)$ which have a weak exterior derivative $d\omega \in L^p (\wedge^{k+1} T^* \Omega)$. Here, the \emph{weak exterior derivative} of a $k$-form $\omega \in L^1_\loc(\wedge^k T^* \Omega)$ is a $(k+1)$-form $d\omega \in L^1_\loc(\wedge^{k+1} T^* \Omega)$ such that
\[
	\int_\Omega d\omega \wedge \eta = (-1)^{k+1} \int_{\Omega} \omega \wedge d\eta
\]
for all test forms $\eta \in C^\infty_0(\wedge^{n-k-1} T^* \Omega)$. Note that a weak exterior derivative $d\omega$ is unique up to a null-set of $\Omega$. The spaces $W^{d,p}(\wedge^k T^* \Omega)$ are equipped with the norm $\norm{\omega}_{W^{d,p}(\Omega)} = \norm{\omega}_{L^p(\Omega)} + \norm{d\omega}_{L^p(\Omega)}$. We also let $W^{1,p}_\loc(\wedge^k T^* \Omega)$ be the space of all $\omega \in L^p_\loc(\wedge^k T^* \Omega)$ with a weak exterior derivative $d\omega \in L^p_\loc(\wedge^{k+1} T^* \Omega)$.

Similarly to the usual Sobolev spaces of functions, the spaces $W^{d,p}(\wedge^k T^* \Omega)$ are Banach spaces for $p \in [1, \infty]$. Moreover, by a standard convolution approximation argument, $C^\infty(\wedge^k T^* \Omega)$ is dense in $W^{d,p}(\wedge^k T^* \Omega)$ for $p \in [1, \infty)$. Note that $0$-forms are real-valued functions, and the space $W^{d,p}(\wedge^0 T^* \Omega)$ is in fact precisely the usual Sobolev space $W^{1,p}(\Omega)$. However, for $k \ge 1$, the coefficients $\omega_I$ of a form $\omega \in W^{d,p}(\wedge^k T^* \Omega)$ need not be $W^{1,p}$-functions; a standard example of this is that if $f, g \in L^p(I)$ where $I \subset \R$ is a bounded open interval, then the $1$-form $\omega = f(x) dx + g(y) dy$ is in $W^{d,p}(\wedge^1 T^* I^2)$ with $d \omega = 0$, even though the functions $(x,y) \mapsto f(x)$ and $(x,y) \mapsto g(y)$ are not necessarily Sobolev-regular.

\subsection{Integration of $L^p$-forms}\label{subsect:int_of_Lp_forms}

Suppose that $\Omega \subset \R^n$ is a convex domain. Then every $(k+1)$-tuple of points $(x_0, x_1, \dots, x_k)$, $x_i \in \Omega$, defines a unique oriented $k$-simplex in $\Omega$ with corners at $x_i$. We denote this simplex by $\Delta(x_0, x_1, \dots, x_k)$. We also denote the family of all $k$-simplices in $\Omega$ by $S^k(\Omega)$.

Given $(x_0, x_1, \dots, x_k) \in \Omega^{k+1}$, we have a standard affine map $\phi_{x_0, \dots, x_k}$ from $\Delta_k$ to $\Delta(x_0, x_1, \dots, x_k)$, where $\Delta_k$ is the reference $k$-simplex 
\[
\Delta_k = \{(s_1, \dots, s_k) \in \R^k : s_i \geq 0, s_1 + \dots + s_k \leq 1\}.
\]
The map $\phi_{x_0, \dots, x_k}$ is given by
\[
\phi_{x_0, \dots, x_k}(s_1, \dots, s_k) = x_0 + s_1(x_1 - x_0) + s_2(x_2 - x_0) + \dots + s_k(x_k - x_0).
\]
We note that for any non-negative Borel function $\rho$ on $\Delta(x_0, \dots, x_k)$, we have
\begin{equation}\label{eq:simplex_change_of_vars}
	\int_{\Delta(x_0, \dots, x_k)} \rho \, d\cH^{k} = \int_{\Delta_k} (\rho \circ \phi_{x_0, \dots, x_k}) \smallabs{J_{\phi_{x_0, \dots, x_k}}} \, dm_k,
\end{equation}
where the co-dimensional Jacobian $\smallabs{J_{\phi_{x_0, \dots, x_k}}}$ for a given $(x_0, x_1, \dots, x_k) \in \Omega^{k+1}$ is the constant function
\begin{equation}\label{eq:simplex_Jacobian}
	\smallabs{J_{\phi_{x_0, \dots, x_k}}} = \sqrt{ \det
		\begin{bmatrix} 
			\ip{x_1 - x_0}{x_1 - x_0}  & \dots & \ip{x_1 - x_0}{x_k - x_0}\\
			\vdots & \ddots & \vdots\\
			\ip{x_k - x_0}{x_1 - x_0}  & \dots & \ip{x_k - x_0}{x_k - x_0}
		\end{bmatrix} 
	}.
\end{equation}

We then recall the $p$-modulus of subfamilies of $S^k(\Omega)$; for a more detailed exposition on the $p$-modulus of surface families, we refer the reader to e.g.\ the work of Fuglede \cite{Fuglede_surface-modulus}. If $\cS \subset S^k(\Omega)$ is a family of $k$-simplices, we say that a non-negative Borel function $\rho : \Omega \to [0, \infty]$ is \emph{admissible for $\cS$} if
\[
	\int_{\Delta} \rho d\cH^{k} \geq 1 \text{ for every } \Delta \in \cS.
\]
We denote the set of admissible functions for $\cS$ by $\adm(\cS)$. Then, given $p \in [1, \infty)$, the \emph{$p$-modulus} of $\cS$ is defined by
\[
	\moddens_p(\cS) = \inf \left\{ \int_{\Omega} \rho^p \vol_n : \rho \in \adm(\cS) \right\}.
\]
The $p$-modulus $\moddens_p$ defines an outer measure on $S^k(\Omega)$. We say that $\cS \subset S^k(\Omega)$ is \emph{$p$-exceptional} if $\moddens_p(\cS) = 0$. 

Now, suppose that $\omega \in L^p(\wedge^k T^* \Omega)$ is Borel, with $p \in [1, \infty)$. We define the integral of $\omega$ over a simplex $\Delta(x_0, \dots, x_k) \in S^k(\Omega)$ by
\begin{equation}\label{eq:int_over_simplex}
	\int_{\Delta(x_0, \dots, x_k)} \Omega = \int_{\Delta_k} \phi_{x_0, \dots x_k}^* \omega,
\end{equation}
where $\phi_{x_0, \dots x_k}^* \omega$ is the usual pull-back of the form $\omega$ given by
\[
	\phi_{x_0, \dots x_k}^* \omega
	= \sum\nolimits_I (\omega_I \circ \phi_{x_0, \dots x_k}) d\phi_{x_0, \dots x_k}^{I_1} \wedge \dots \wedge d\phi_{x_0, \dots x_k}^{I_k}.
\] 
Then the integral in \eqref{eq:int_over_simplex} is well-defined and finite outside a $p$-exceptional family of $k$-simplices in $\Omega$. Moreover, changing $\omega$ in a null-set only changes the value of the integral on a $p$-exceptional family of $k$-simplices. For details, we refer e.g.\ to \cite[Section 4.1]{Kangasniemi-Prywes_SurfaceModulus}, or the corresponding results for functions instead of $k$-forms given in \cite[Theorem 3]{Fuglede_surface-modulus}. Because every $\omega \in L^p(\wedge^k T^* \Omega)$ has a Borel representative, we may use this method to define integrals over simplices for $L^p$-forms.

\subsection{Other results}

We also record the following simple fact about convex domains that we use later in the article.

\begin{lemma}\label{lem:convex_approx}
	Suppose that $\Omega \subset \R^n$ is a bounded, convex domain. Then for every $\eps > 0$, the domain $\Omega_\eps = \{x \in \Omega : \dist (x, \partial \Omega) > \eps\}$ is a convex domain that is compactly contained in $\Omega$.
\end{lemma}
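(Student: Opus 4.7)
The plan is to verify, in order, that $\Omega_\eps$ is open, convex, and compactly contained in $\Omega$, together with the connectedness needed to call it a domain. Openness is immediate: the function $d \colon \R^n \to \R$, $d(x) = \dist(x, \partial \Omega)$, is continuous, so $\Omega_\eps = d^{-1}((\eps, \infty)) \cap \Omega$ is open.

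The main geometric content is convexity, which I would obtain by proving that $d$ is concave on $\Omega$. The key sublemma is: if $\B^n(x, r) \subset \Omega$ and $\B^n(y, s) \subset \Omega$, then $\B^n((1-t)x + ty,\, (1-t)r + ts) \subset \Omega$ for every $t \in [0,1]$. To prove it, I would write an arbitrary point of the interpolated ball as $(1-t)x + ty + w$ with $|w| < (1-t)r + ts$, and decompose $w = (1-t) w_1 + t w_2$ via
\[
	w_1 = \frac{r\, w}{(1-t)r + ts}, \qquad w_2 = \frac{s\, w}{(1-t)r + ts},
\]
so that $|w_1| < r$ and $|w_2| < s$. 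Then $x + w_1 \in \Omega$ and $y + w_2 \in \Omega$, and convexity of $\Omega$ gives $(1-t)(x + w_1) + t(y + w_2) = (1-t)x + ty + w \in \Omega$. Letting $r \nearrow d(x)$ and $s \nearrow d(y)$ yields $d((1-t)x + ty) \geq (1-t) d(x) + t d(y)$, so whenever $x, y \in \Omega_\eps$ one has $d((1-t)x + ty) > \eps$, showing $(1-t)x + ty \in \Omega_\eps$.

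For compact containment, boundedness of $\Omega$ makes $\overline{\Omega}$ compact, hence $\overline{\Omega_\eps} \subset \overline{\Omega}$ is compact. By continuity of $d$, every $z \in \overline{\Omega_\eps}$ satisfies $d(z) \geq \eps > 0$, which excludes $z \in \partial \Omega$, so $\overline{\Omega_\eps} \subset \Omega$. Finally, any nonempty convex subset of $\R^n$ is path-connected, so $\Omega_\eps$ is a domain whenever it is nonempty (the relevant case for the applications in the paper). I do not expect a real obstacle here: the only step with genuine geometric content is the ball-interpolation argument used for concavity of $d$, and the remaining verifications are routine topology.
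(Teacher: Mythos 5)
Your proof is correct, and it takes a genuinely different route from the paper's. The paper proves convexity by contradiction: it picks $z \in [x,y]$ with $z \notin \Omega_\eps$, chooses a boundary point $w \in \partial\Omega$ with $\abs{z - w} < \min(\dist(x, \partial\Omega), \dist(y, \partial\Omega))$, and observes that $x + w - z$ and $y + w - z$ both lie in $\Omega$ while $w = (1-t)(x + w - z) + t(y + w - z)$ sits on the segment between them, contradicting convexity of $\Omega$ since $w \notin \Omega$. You instead prove the stronger and classical fact that $x \mapsto \dist(x, \partial\Omega)$ is concave on a convex domain, from which convexity of every superlevel set follows at once. Your ball-interpolation sublemma and the paper's translation trick exploit convexity of $\Omega$ in the same spirit, but your framing isolates the underlying structural reason and gives the convexity of $\Omega_\delta$ for all $\delta > 0$ simultaneously. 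Your arguments for openness (continuity of the distance function) and compact containment (compactness of $\overline{\Omega}$ plus a lower bound on $\dist(\cdot, \partial\Omega)$ on $\overline{\Omega_\eps}$) are the standard ones; the paper simply declares those parts clear. Both proofs are valid: the paper's is shorter and more self-contained, while yours is more conceptual and reusable.
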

\begin{proof}
	The parts that $\Omega_\eps$ is open and compactly contained in $\Omega$ are clear. Hence, the main property to verify is that $\Omega_\eps$ is convex, which will also automatically show that $\Omega_\eps$ is connected. Thus, suppose that $x, y \in \Omega_\eps$, and suppose towards contradiction that there exists $z \in [x, y]$ such that $z \notin \Omega_\eps$. Then, since $d(x, \partial\Omega) > \eps$, $d(y, \partial \Omega) > \eps$, and $d(z, \partial \Omega) \le \eps$ there exists a $w \in \partial \Omega$ such that $d(z, w) < \min(d(x, \partial\Omega), d(y, \partial \Omega))$. By the selection criterion of $w$, we have $x+w-z \in \Omega$ and $y + w - z \in \Omega$. But now $w \notin \Omega$ is on the line segment $[x+w-z, y+w-z] \subset \Omega$, which is a contradiction.
\end{proof}
	
\section{The simplicial integration function of a $k$-form}\label{sect:integration_function}

Let $\Omega \subset \R^n$ be open, and let $\omega \in L^p(\wedge^k T^* \Omega)$ for $p \in [1, \infty)$. By changing $\omega$ in a set of measure zero, we may assume that $\omega$ is Borel. We extend $\omega$ to an element $\omega \in L^p(\wedge^k T^* \R^n)$ by setting $\omega = 0$ outside $\Omega$. As stated in the introduction, we then define a map $I_\omega \colon \Omega^{k+1} \to \R$ by
\begin{equation}\label{eq:I_omega_def}
	I_\omega(x_0, x_1, \dots, x_k) = \int_{\Delta(x_0, x_1, \dots, x_k)} \omega.
\end{equation}
Note that for $0$-forms, i.e.\ functions $f \in L^p(\Omega)$, the map $I_f$ is just the evaluation map $I_f(x_0) = f(x_0)$.

By the discussion in Section \ref{subsect:int_of_Lp_forms}, we have that the integral on the right hand side of \eqref{eq:I_omega_def} is finite outside a $p$-exceptional family of simplices, and that changing $\omega$ in a set of measure zero only changes the integral on a $p$-exceptional family of simplices. Thus, in order for $I_\omega$ to be well-defined, we show that for every $p$-exceptional family of simplices, the corresponding family of corners is a Lebesgue null-set of $\Omega^{k+1}$. This fact is somewhat unsurprising, but nonetheless irritating to prove.

\begin{lemma}\label{lem:modulus_to_measure}
	Let $\Omega \subset \R^n$ be a convex domain, let $P \subset \Omega^{k+1}$, and let $\cS = \{ \Delta(x_0, \dots, x_k) : (x_0, \dots, x_k) \in P\}$. If $\cS$ is $p$-exceptional for some $p \in [1, \infty)$, then $P$ is a Lebesgue null-set.
\end{lemma}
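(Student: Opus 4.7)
The plan is to apply a standard Fuglede-type trick and reduce the lemma to a Fubini--Tonelli computation. First I would reduce to the case that $\Omega$ is bounded: fixing any $x_* \in \Omega$, the sets $\Omega_N := \Omega \cap \B^n(x_*, N)$ are bounded convex domains whose union is $\Omega$. Since restricting any $\rho \in \adm(\cS)$ to $\Omega_N$ yields an admissible function for $\cS_N := \cS \cap S^k(\Omega_N)$ in $\Omega_N$ without increasing the $L^p$-norm, we have $\moddens_p(\cS_N) = 0$ in $\Omega_N$; and since each simplex $\Delta(\mathbf{x})$ with $\mathbf{x} \in P$ is compact in $\Omega$, it is contained in some $\Omega_N$, so $P$ decomposes as a countable union of the corresponding subsets $P_N \subset \Omega_N^{k+1}$. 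Assuming now that $\Omega$ is bounded, a standard series argument (pick $\rho_j \in \adm(\cS)$ with $\norm{\rho_j}_{L^p(\Omega)} < 2^{-j}$ and set $\rho := \sum_j \rho_j$) produces a single nonnegative Borel function $\rho \in L^p(\Omega) \subset L^1(\Omega)$ satisfying $\int_\Delta \rho \, d\cH^k = \infty$ for every $\Delta \in \cS$.

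Extending $\rho$ by zero to $\R^n$ and defining $F \colon \Omega^{k+1} \to [0, \infty]$ by $F(\mathbf{x}) := \int_{\Delta(\mathbf{x})} \rho \, d\cH^k$, we have $F \equiv \infty$ on $P$, so it suffices to show $\int_{\Omega^{k+1}} F \, d\mathbf{x} < \infty$. By the parametrization \eqref{eq:simplex_change_of_vars}, $F(\mathbf{x}) = \int_{\Delta_k} \rho(\phi_{\mathbf{x}}(s)) \smallabs{J_{\phi_\mathbf{x}}} \, ds$, and after swapping the outer integrals via Tonelli, the plan is to perform the linear change of variables $y_0 := x_0$, $y_i := x_i - x_0$ for $i = 1, \dots, k$, which has unit Jacobian. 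In these new coordinates $\phi_\mathbf{x}(s) = y_0 + \sum_{i=1}^k s_i y_i$, while by \eqref{eq:simplex_Jacobian} the quantity $\smallabs{J_{\phi_\mathbf{x}}} = \sqrt{\det(\smallip{y_i}{y_j})_{i,j=1}^k}$ depends only on $(y_1, \dots, y_k)$ and is bounded by $\prod_{i=1}^k \smallabs{y_i} \leq \diam(\Omega)^k$ via Hadamard's inequality. Hence the innermost $y_0$-integral of $\rho(y_0 + \sum s_i y_i)$ is majorized by $\norm{\rho}_{L^1(\R^n)}$ after the translation substitution $z := y_0 + \sum s_i y_i$, and the remaining integrals over $(y_1, \dots, y_k)$ (constrained to $\smallabs{y_i} < \diam(\Omega)$) and $s \in \Delta_k$ involve only bounded integrands on bounded sets. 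Combining these estimates gives $\int F \, d\mathbf{x} < \infty$, so $F < \infty$ almost everywhere, forcing $P$ to be Lebesgue null.

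The main obstacle is precisely the choice of substitution. The more direct change of variable replacing $x_0$ by $y := \phi_\mathbf{x}(s)$ introduces a factor $(1 - s_1 - \dots - s_k)^{-n}$ in front of $\norm{\rho}_{L^1}$, which is not integrable over $\Delta_k$ once $n \geq 1$. Passing instead to the edge variables $y_i = x_i - x_0$ avoids this barycentric singularity: the Jacobian $\smallabs{J_{\phi_\mathbf{x}}}$ becomes independent of $y_0$, allowing $y_0$ to be eliminated purely via the translation-invariance of Lebesgue measure applied to $\rho \in L^1(\R^n)$, without paying any price in the barycentric coordinate $s$.
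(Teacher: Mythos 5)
Your proof is correct, and it takes a genuinely different and arguably cleaner route than the paper. The paper also begins by extracting a Borel $\rho$ with $\norm{\rho}_{L^p} < \infty$ and $\int_\Delta \rho \, d\cH^k = \infty$ for all $\Delta \in \cS$ (citing Fuglede's characterization rather than re-running the series argument), and also considers the function $F(\mathbf{x}) = \int_{\Delta(\mathbf{x})} \rho \, d\cH^k$. From there, however, the paper argues by contradiction: assuming the exceptional set has positive $(k+1)n$-measure, it applies Fubini four successive times, slicing by the edge variables, then by the $(n-k)$-plane $V$ perpendicular to $\Delta(0, x_1, \dots, x_k)$, and uses H\"older's inequality on each $k$-face to produce a measurable $C \subset \Omega$ with $\int_C \rho^p \, dm_n = \infty$, contradicting $\rho \in L^p$. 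Your argument instead shows directly that $\int_{\Omega^{k+1}} F \, d\mathbf{x} < \infty$: after a unit-Jacobian linear change to edge coordinates $y_i = x_i - x_0$, the Gram determinant $\smallabs{J_{\phi_\mathbf{x}}}$ factors out of the $y_0$-integral, and translation invariance of Lebesgue measure reduces that integral to $\norm{\rho}_{L^1(\R^n)}$. This only uses $\rho \in L^1$, so you need the preliminary reduction to bounded $\Omega$ to get $L^p(\Omega) \subset L^1(\Omega)$ (which you handle correctly via exhaustion by bounded convex subsets); the paper works with $\rho^p$ throughout and does not need this reduction. Your route avoids the contradiction structure and the nested Fubini slicings entirely, at the cost of the bounded-domain reduction step. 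One small point you gloss over but the paper handles explicitly: to apply Tonelli on $\Omega^{k+1} \times \Delta_k$ you need the integrand $(\mathbf{x}, s) \mapsto \rho(\phi_{\mathbf{x}}(s))\smallabs{J_{\phi_{\mathbf{x}}}}$ to be measurable; this holds because $\rho$ is Borel and the maps $(\mathbf{x}, s) \mapsto \phi_{\mathbf{x}}(s)$ and $\mathbf{x} \mapsto \smallabs{J_{\phi_{\mathbf{x}}}}$ are continuous, and is worth recording. As a side remark, you could in fact bypass the $L^1$ reduction entirely by bounding $\int_{\Omega^{k+1}} F^p$ instead of $\int_{\Omega^{k+1}} F$, applying H\"older on $\Delta_k$ first and then the same edge-coordinate change of variables with $\rho^p$ in place of $\rho$; this makes your Tonelli approach work verbatim with $\rho \in L^p$ alone, though it still needs boundedness of $\Omega$ for the $(y_1,\dots,y_k)$-integral to be finite.
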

\begin{proof}
	By Fuglede's characterization of $p$-exceptional families of measures, see \cite[Theorem 2]{Fuglede_surface-modulus}, there exists a Borel function $\rho \colon \Omega \to [0, \infty]$ such that $\norm{\rho}_{L^p(\Omega)} < \infty$, but
	\[
		\int_{\Delta} \rho \, d\cH^k = \infty
	\]
	for every $\Delta \in \cS$. By \eqref{eq:simplex_change_of_vars}, this implies that
	\begin{equation}\label{eq:integral_is_infinite}
		\int_{\Delta_k} (\rho \circ \phi_{x_0, \dots, x_k}) \smallabs{J_{\phi_{x_0, \dots, x_k}}} \, dm_k = \infty
	\end{equation}
	for every $(x_0, \dots x_k) \in P$. 
	
	Consider the function
	\[
		\Phi_\omega \colon \Omega^{k+1} \times \Delta_k \to [0, \infty], \,\, \Phi_\omega((x_0, \dots, x_k), x) = \smallabs{J_{\phi_{x_0, \dots, x_k}}} \rho(\phi_{x_0, \dots, x_k}(x)).
	\]
	Since $\rho$ is Borel, the map $((x_0, \dots x_k), x) \mapsto \phi_{x_0, \dots, x_k}(x)$ is continuous, and the map $(x_0, \dots x_k) \mapsto \smallabs{J_{\phi_{x_0, \dots, x_k}}}$ is continuous, we obtain that $\Phi_\omega$ is Borel. Thus, by Fubini's theorem, the function
	\begin{equation}\label{eq:slice_map_1}
		(x_0, \dots, x_k) \mapsto \int_{\Delta_k} \Phi_\omega((x_0, \dots, x_k), x) \, dm_k
	\end{equation}
	is Borel. Now, we let $\tilde{P}$ be the set of all $(x_0, \dots x_k) \in \Omega^{k+1}$ such that \eqref{eq:integral_is_infinite} applies, and note that $\tilde{P}$ is Borel, since it is the pre-image of $\{\infty\}$ under the Borel map given in \eqref{eq:slice_map_1}.
	
	We then prove that $\tilde{P}$ is a null-set, as since $P \subset \tilde{P}$, the claim then follows. Suppose towards contradiction that $m_{(k+1)n}(\tilde{P}) > 0$. Let
	\[
		T \colon \Omega \times (\R^n)^k  \to (\R^n)^{k+1}, \,\, 
		F(x_0, x_1, \dots, x_k) = (x_0, x_0 + x_1, \dots, x_0 + x_k),
	\]
	and note that $T$ is continuous, $T$ is measure-preserving, and $\Omega^{k+1}$ is contained in the image of $T$. Thus, $T^{-1} \tilde{P}$ is a Borel set with positive measure. For $(x_1, \dots, x_k) \in(\R^n)^k$, we let
	\[
		E_{x_1, \dots, x_k} = (T^{-1} \tilde{P}) \cap (\Omega \times \{(x_1, \dots, x_k)\}).
	\]
	Then by Fubini's theorem, $E_{x_1, \dots, x_k}$ is Borel for $m_{kn}$-a.e.\ $(x_1, \dots, x_k) \in (\R^n)^k$, and there exists a Borel set $F \subset (\R^n)^k$ with $m_{kn}(F) > 0$ such that $E_{x_1, \dots, x_k}$ is Borel and $m_n(E_{x_1, \dots, x_k}) > 0$ for every $(x_1, \dots, x_k) \in F$. 
	
	We fix an element $(x_1, \dots, x_k) \in F$. Since $\tilde{P}$ contains a simplex of the form $\Delta(x_0, x_0 + x_1, \dots, x_0 + x_k)$, we must have that $\Delta(0, x_1, \dots, x_k)$ has positive $k$-dimensional measure. Thus, there exists a unique $(n-k)$-dimensional vector space perpendicular to $\Delta(0, x_1, \dots, x_k)$, which we denote $V$. Since $m_n(E_{x_1, \dots, x_k}) > 0$, our third use of Fubini's theorem so far yields a Borel set $W \subset V^{\perp}$ with $\cH^k(W) > 0$ such that $(x + V) \cap E_{x_1, \dots, x_k}$ is Borel and $\cH^{n-k}((x + V) \cap E_{x_1, \dots, x_k}) > 0$ for every $x \in W$.
	
	Now, we fix any $x_0 \in W$, and let 
	\[
		C = \Delta(0, x_1, \dots, x_k) + ((x_0 + V) \cap E_{x_1, \dots, x_k}),
	\]
	noting that $C$ is a measurable set. By definition of $E_{x_1, \dots, x_k}$, we observe that $(x, x+x_1, \dots, x+x_k) \in \tilde{P}$ for every $x \in E_{x_1, \dots, x_k}$. Thus, since $\tilde{P} \subset \Omega^{k+1}$ and $\Omega$ is convex, we have $C \subset \Omega$. Moreover, for $\cH^{n-k}$-a.e.\ $x \in (x_0 + V) \cap E_{x_1, \dots, x_k}$, we have by H\"older's inequality that
	\[
		\int_{x + \Delta(0, x_1, \dots, x_k)} \rho^p \, d\cH^k 
		\geq \frac{1}{[\cH^k(\Delta)]^{p-1}} \left(\int_{\Delta} \rho \, d\cH^k \right)^p 
		= \infty. 
	\]
	Now, since $(x_0 + V) \cap E_{x_1, \dots, x_k}$ is perpendicular to $\Delta(0, x_1, \dots, x_k)$, and since $\cH^{n-k}((x + V) \cap E_{x_1, \dots, x_k}) > 0$, the fourth and final use of Fubini's theorem in this proof yields
	\[
		\int_C \rho^p \, dm_n
		= \int_{(x_0 + V) \cap E_{x_1, \dots, x_k}} \infty \, d \cH^{n-k}
		= \infty.
	\]
	This is a contradiction with $\norm{\rho}_{L^p(\Omega)} < \infty$. Hence, $m_{(k+1)n}(\tilde{P}) > 0$ is false, and the proof is thus complete.
\end{proof}

Thus, together with the discussion of Section \ref{subsect:int_of_Lp_forms}, Lemma \ref{lem:modulus_to_measure} with $\Omega = \R^n$ immediately yields the following corollary.

\begin{cor}\label{cor:I_omega_well_def}
	Let $\Omega \subset \R^n$ be an open set, let $p \in [1, \infty)$, and let $\omega \in L^p(\wedge^k T^*\Omega)$ be Borel, where we extend $\omega$ to all of $\R^n$ by zero extension. Then $I_\omega$ is well-defined and finite for a.e.\ $(x_0, \dots, x_k) \in \Omega^{k+1}$. Moreover, changing $\omega$ in a set of measure zero only changes $I_\omega$ in a set of measure zero.
\end{cor}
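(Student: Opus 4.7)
The statement follows quite directly from combining Lemma~\ref{lem:modulus_to_measure} with the facts recalled in Section~\ref{subsect:int_of_Lp_forms}, so the plan is mostly one of bookkeeping rather than of genuine new work.

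First I would reduce to the convex setting by working on the zero extension. Since $\omega$ is Borel and zero-extended outside $\Omega$, the extended form (which I will still call $\omega$) lies in $L^p(\wedge^k T^*\R^n)$ and is Borel, and its integrals over simplices $\Delta(x_0,\dots,x_k)$ with $x_i \in \Omega$ agree with those appearing in the definition of $I_\omega$. This lets me apply all the machinery of Section~\ref{subsect:int_of_Lp_forms} with the ambient convex domain taken to be $\R^n$.

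Next, by the results recalled in Section~\ref{subsect:int_of_Lp_forms} (specifically the analogue of \cite[Theorem~3]{Fuglede_surface-modulus} for $k$-forms given in \cite[Section~4.1]{Kangasniemi-Prywes_SurfaceModulus}), there exists a $p$-exceptional family $\cS_0 \subset S^k(\R^n)$ outside of which the integral $\int_\Delta \omega$ is defined and finite. Let $P_0 \subset (\R^n)^{k+1}$ be the corresponding set of corner tuples. By Lemma~\ref{lem:modulus_to_measure} applied with the convex domain $\R^n$, the set $P_0$ is a Lebesgue null-set in $(\R^n)^{k+1}$, and hence $P_0 \cap \Omega^{k+1}$ is a null-set in $\Omega^{k+1}$. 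For every $(x_0,\dots,x_k)$ outside this null-set, $I_\omega(x_0,\dots,x_k)$ is well-defined and finite, proving the first assertion.

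For the second assertion, suppose $\omega' \in L^p(\wedge^k T^*\Omega)$ is Borel with $\omega' = \omega$ outside a null-set of $\Omega$. Zero-extending both, we get two Borel forms in $L^p(\wedge^k T^*\R^n)$ which agree outside a null-set of $\R^n$. By Section~\ref{subsect:int_of_Lp_forms}, the family $\cS_1 \subset S^k(\R^n)$ of simplices on which $\int_\Delta \omega \neq \int_\Delta \omega'$ (or either integral is undefined or infinite) is $p$-exceptional. A second application of Lemma~\ref{lem:modulus_to_measure} shows that the associated corner set $P_1 \subset (\R^n)^{k+1}$ is null, so $I_\omega = I_{\omega'}$ a.e.\ on $\Omega^{k+1}$. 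Since there is no genuine obstacle beyond carefully tracking the two references, I expect the main (minor) issue to be simply making sure the extended form is still Borel and that the convexity hypothesis of Lemma~\ref{lem:modulus_to_measure} is legitimately met by taking the ambient space to be $\R^n$ rather than $\Omega$.
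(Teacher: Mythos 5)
Your proof is correct and takes essentially the same route as the paper: the paper's own argument is precisely to combine the facts from Section~\ref{subsect:int_of_Lp_forms} (integrals finite outside a $p$-exceptional family, and $L^p$-equal forms differ only on a $p$-exceptional family) with Lemma~\ref{lem:modulus_to_measure} applied in the convex domain $\R^n$, which is the same two-step bookkeeping you carry out.
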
 

We also show that $I_\omega$ is measurable.

\begin{lemma}\label{lem:I_omega_meas}
	Let $\Omega \subset \R^n$ be an open set, let $p \in [1, \infty)$, and let $\omega \in L^p(\wedge^k T^*\Omega)$ be Borel, where we extend $\omega$ to all of $\R^n$ by zero extension. Then $I_\omega \colon \Omega^{k+1} \to \R$ is Borel.
\end{lemma}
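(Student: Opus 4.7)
The plan is to unravel the integral \eqref{eq:I_omega_def} into an explicit formula to which Fubini--Tonelli applies. Recall that for $(x_0, \dots, x_k) \in \Omega^{k+1}$, the affine map $\phi_{x_0, \dots, x_k} \colon \Delta_k \to \R^n$ has coordinates that are degree-one polynomials in the entries of $(x_0, x_1 - x_0, \dots, x_k - x_0)$ and in the simplex variable $s$. Consequently, each exterior derivative $d\phi_{x_0,\dots,x_k}^{I_j}$ has constant coefficients that are linear in $x_i - x_0$, and hence each wedge product $d\phi_{x_0,\dots,x_k}^{I_1} \wedge \dots \wedge d\phi_{x_0,\dots,x_k}^{I_k}$ equals $D_I(x_0, \dots, x_k) \, ds_1 \wedge \dots \wedge ds_k$ for a polynomial $D_I$ in the $x_i$. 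Writing $\omega = \sum_I \omega_I \, dx_{I_1} \wedge \dots \wedge dx_{I_k}$ with Borel coefficients $\omega_I$, the pull-back formula used in \eqref{eq:int_over_simplex} then yields
\[
	I_\omega(x_0, \dots, x_k) = \sum_I D_I(x_0, \dots, x_k) \int_{\Delta_k} \omega_I(\phi_{x_0, \dots, x_k}(s)) \, dm_k(s).
\]

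Next, I would verify that each summand is Borel. The evaluation map $((x_0, \dots, x_k), s) \mapsto \phi_{x_0, \dots, x_k}(s)$ is continuous (in fact polynomial) in all variables, so its composition with the Borel function $\omega_I$ produces a Borel function $\Psi_I \colon \Omega^{k+1} \times \Delta_k \to \R$. Applying Fubini--Tonelli to $\abs{\Psi_I}$ shows that the map $(x_0, \dots, x_k) \mapsto \int_{\Delta_k} \abs{\Psi_I((x_0, \dots, x_k), s)} \, dm_k(s)$ is Borel into $[0, \infty]$. Hence the set $A_I \subset \Omega^{k+1}$ on which this integral is finite is Borel, and on $A_I$ the signed integral $(x_0, \dots, x_k) \mapsto \int_{\Delta_k} \Psi_I \, dm_k$ is Borel, as the difference of the Borel integrals of the positive and negative parts of $\Psi_I$. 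Since $D_I$ is a polynomial, each summand in the displayed formula is then Borel on the Borel set $A := \bigcap_I A_I$.

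Finally, by Corollary \ref{cor:I_omega_well_def} the set $A$ has full measure in $\Omega^{k+1}$, so the function $I_\omega$ is well-defined and agrees with the above formula on $A$. We extend $I_\omega$ to a Borel function on all of $\Omega^{k+1}$ by declaring it to equal $0$ on the Borel complement $\Omega^{k+1} \setminus A$ (this alteration occurs on a null-set, hence is consistent with the earlier convention established by Corollary \ref{cor:I_omega_well_def}). With this convention $I_\omega$ is Borel on all of $\Omega^{k+1}$. I do not anticipate a serious obstacle; the only mild subtlety is ensuring that the set where the pullback integrand is absolutely integrable over $\Delta_k$ is itself Borel, which is precisely what the nonnegative Fubini--Tonelli argument provides.
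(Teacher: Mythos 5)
Your proof takes essentially the same route as the paper's: both express $I_\omega$ as an integral over $\Delta_k$ of a Borel function, invoke Fubini--Tonelli to get Borel measurability of the integral via the positive/negative parts, and then note the exceptional set is a Borel null-set which can be handled by redefinition. The paper works directly with the density $\rho$ of the pullback $\phi_{x_0,\dots,x_k}^*\omega$ and cites Bogachev for the Fubini-type conclusion, whereas you make the positive/negative-part decomposition explicit and first split the integrand over multi-indices $I$; these are cosmetic differences. One small imprecision: Corollary \ref{cor:I_omega_well_def} directly gives integrability of the pullback density for a.e.\ tuple, not of each individual $\abs{\Psi_I}$, so concluding that $A = \bigcap_I A_I$ has full measure needs the extra (easy) observation that nondegenerate tuples form a full-measure set and that on a nondegenerate simplex $\int_{\Delta(x_0,\dots,x_k)}\abs{\omega}\,d\cH^k < \infty$ forces $\int_{\Delta_k}\abs{\omega_I\circ\phi_{x_0,\dots,x_k}}\,dm_k < \infty$ for every $I$. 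This does not affect the validity of the argument, but it is worth spelling out.
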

\begin{proof}
	Consider the function $\rho \colon (\R^n)^{k+1} \times \Delta_k \to \R$ defined by
	\[
		\rho(x_0, \dots, x_k, y) \vol_k = (\phi_{x_0, \dots, x_k}^* \omega)_y,
	\]
	where $\phi_{x_0, \dots, x_k} \colon \Delta_k \to \Delta(x_0, \dots, x_k)$ is as in Section \ref{subsect:int_of_Lp_forms}. Then $\rho$ is Borel since $\omega$ is Borel and $(x_0, \dots, x_k, y) \mapsto \phi_{x_0, \dots, x_k}(y)$ is smooth. Moreover,
	\[
		I_\omega(x_0, \dots, x_k) = \int_{\Delta_k} \phi_{x_0, \dots, x_k}^* \omega
		= \int_{\Delta_k} \rho(x_0, \dots, x_k, y) \, dm_k(y).
	\]
	By Corollary \ref{cor:I_omega_well_def}, $y \mapsto \rho(x_0, \dots, x_k, y)$ is integrable for a.e.\ $(x_0, \dots, x_k) \in \Omega^{k+1}$, where we may assume that the exceptional set is Borel by enlarging it. Thus, the fact that $I_\omega$ is Borel measurable follows from a technical fact that is typically wrapped into the statement of Fubini's theorem; for a precise statement of the result we use, see e.g.\ \cite[Corollary 3.4.6]{Bogachev_MeasureTheory}.
\end{proof}

In conclusion, by Corollary \ref{cor:I_omega_well_def} and Lemma \ref{lem:I_omega_meas}, we now have for every open $\Omega \subset \R^n$ a linear map $L^p(\wedge^k T^* \Omega) \to M^k(\Omega, \R^n)$ defined by $\omega \mapsto I_\omega$, where $\omega$ is a chosen Borel representative of its $L^p$-class that has been extended outside $\Omega$ as zero. Moreover, the choice of Borel representative $\omega$ only affects the values of $I_\omega$ in a null-set of $\Omega^{k+1}$.

\section{$L^p$-norms of integration functions}

Let $\Omega \subset \R^n$ be open. As stated in the introduction, we let $M^k(\Omega; \R)$ denote the vector space of measurable functions $F \colon \Omega^{k+1} \to \R$. The elements of $M^k(\Omega; \R)$ are called \emph{measurable (real-valued) $k$-multifunctions} on $\Omega$. In particular, a measurable function $f \colon \Omega \to \R$ is a measurable $0$-multifunction on $\Omega$.

We then fix additional notation related to related to the four seminorms on $M^k(\Omega; \R)$ that were defined in \eqref{eq:Lp_for_multifunct_def}-\eqref{eq:Lp_for_multifunct_def_alt_loc}. As stated in the introduction, given $E \subset \Omega$, $R > 0$, and $c > 0$, we denote
\[
	\B(E, k, R) = \{(x_0, \dots, x_k) \in E^{k+1} : \forall i \in \{1, \dots, k\}, 
	x_i \in \B^n(x_0, R)\}.
\]
and
\[
	E(k, c) := \left\{ (x_0, \dots, x_k) \in E^{k+1} : \max_{i = 1, \dots, k} \abs{x_0 - x_k} < c \dist(x_0, \partial E) \right\}.
\]
Then, for every one of the seminorms \eqref{eq:Lp_for_multifunct_def}-\eqref{eq:Lp_for_multifunct_def_alt_loc}, we define a version for fixed $\theta \in (0, 1)$. In particular, if $F \in M^k(\Omega; \R)$, we denote
\begin{align*}
	\norm{F}_{L^p M^k(E, \theta)}^p 
	&= \int_{E^{k+1}} \frac{(1 - \theta)^k \abs{F(x_0, \dots, x_k)}^p \, dx_0 \ldots dx_k}{\left(\abs{x_1 - x_0} \abs{x_2 - x_0} \cdots \abs{x_k - x_0}\right)^{n + \theta p}},\\
	\norm{F}_{L^p M^k(E\mid R, \theta)}^p 
	&= \int_{\B(E, k, R)} \frac{(1 - \theta)^k \abs{F(x_0, \dots, x_k)}^p \, dx_0 \ldots dx_k}{\left(\abs{x_1 - x_0} \abs{x_2 - x_0} \cdots \abs{x_k - x_0}\right)^{n + \theta p}},\\
	\norm{F}_{L^p M^{k, c}(E, \theta)}^p 
	&= \int_{E(k, c)} \frac{(1 - \theta)^k \abs{F(x_0, \dots, x_k)}^p \, dx_0 \ldots dx_k}{\left(\abs{x_1 - x_0} \abs{x_2 - x_0} \cdots \abs{x_k - x_0}\right)^{n + \theta p}},\\
	\norm{F}_{L^p M^{k, c}(E\mid R, \theta)}^p 
	&= \int_{\B(E, k, R) \cap E(k, c)} \frac{(1 - \theta)^k \abs{F(x_0, \dots, x_k)}^p \, dx_0 \ldots dx_k}{\left(\abs{x_1 - x_0} \abs{x_2 - x_0} \cdots \abs{x_k - x_0}\right)^{n + \theta p}}.
\end{align*}

Since $t \mapsto t^p$ is continuous in $[0, \infty]$, it follows that
\begin{align*}
	\norm{F}_{L^p M^k(E)} &= \limsup_{\theta \to 1^{-}} \norm{F}_{L^p M^k(E, \theta)}, \text{ and}\\
	[F]_{L^p M^k(E)} &= \liminf_{\theta \to 1^{-}} \norm{F}_{L^p M^k(E, \theta)}.
\end{align*}
It is clear that every $\norm{\cdot}_{L^p M^k(E, \theta)}^p$ is a $[0, \infty]$-valued seminorm on $M^k(\Omega; \R)$, as they are just weighted $L^p$-seminorms. Thus, since $\limsup$ is subadditive and commutes with multiplication by non-negative constants, $\norm{\cdot}_{L^p M^k(E)}$ is also a $[0, \infty]$-valued seminorm on $M^k(\Omega; \R)$. We also recall that we say that $\norm{F}_{L^p M^k(E)}$ is obtained as a limit if 
\[
	[F]_{L^p M^k(E)} = \norm{F}_{L^p M^k(E)} = \lim_{\theta \to 1^{-}} \norm{F}_{L^p M^k(E, \theta)}.
\]
Moreover, mutatis mutandis, the aforementioned facts for $\norm{F}_{L^p M^k(E)}$ also apply to all of the other seminorms defined in \eqref{eq:Lp_for_multifunct_def}-\eqref{eq:Lp_for_multifunct_def_alt_loc}, as well as their respective $\liminf$-counterparts.

Clearly, we have for all $\theta \in (0, 1)$ that
\begin{gather*}
	\norm{\cdot}_{L^p M^{k, c}(E\mid R, \theta)} 
		\le \norm{\cdot}_{L^p M^{k}(E\mid R, \theta)}
		\le \norm{\cdot}_{L^p M^{k}(E, \theta)}, \text{ and}\\
	\norm{\cdot}_{L^p M^{k, c}(E\mid R, \theta)} 
		\le \norm{\cdot}_{L^p M^{k, c}(E, \theta)}
		\le \norm{\cdot}_{L^p M^{k}(E, \theta)}.
\end{gather*}
Thus, the above inequalities also pass to the respective $\limsup$ and $\liminf$ -versions.
We also point out that if $E$ is bounded, then for all $\theta \in (0, 1)$, we have
\begin{align}
	\norm{F}_{L^p M^k(E, \theta)} &= \norm{F}_{L^p M^k(E\mid (\diam E), \theta)}, \quad \text{and} \label{eq:bdd_set_equality}\\
	\norm{F}_{L^p M^{k, c}(E, \theta)} &= \norm{F}_{L^p M^{k, c}(E\mid (\diam E), \theta)}.
	\label{eq:bdd_set_equality_alt}
\end{align}

\begin{rem}\label{rem:diagonal behavior}
	Let $\Omega \subset \R^n$ be open, let $R > 0$, and let $F \in M^1(\Omega; \R)$ with $\norm{F}_{L^p M^1(\Omega\mid R)} < \infty$. We claim that if $F$ vanishes in a uniform neighborhood $\B(\Omega, 1, r)$ of the diagonal $\Delta_1(\Omega) \subset \Omega^2$, where $r > 0$, then $\norm{F}_{L^p M^1(\Omega\mid R)} = 0$. Note that if $\Omega$ is bounded, this implies a similar result for $\norm{F}_{L^p M^1(\Omega)}$ by setting $R = \diam \Omega$.
	
	Indeed, since $\norm{F}_{L^p M^1(\Omega\mid R)} < \infty$, there exists a $\theta_0 \in (0, 1)$ for which $\norm{F}_{L^p M^1(\Omega\mid R, \theta_0)} < \infty$. It follows that
	\[
		\int_{\B(\Omega, 1, R)} \abs{F(x,y)}^p \, dx dy \le \frac{R^{n+p\theta_0}}{1-\theta_0} \norm{F}_{L^p M^1(\Omega\mid R, \theta_0)}^p < \infty.
	\]
	Now, if $(x,y) \notin \B(\omega, 1, r)$, then $\abs{x-y} \ge r$. Thus, using the fact that $F$ vanishes in $\B(\omega, 1, r)$, we obtain that
	\begin{multline*}
		\norm{F}_{L^p M^1(\Omega\mid R)}^p 
		\le \limsup_{\theta \to 1^{-}} \frac{1-\theta}{r^{np+\theta}}
			\int_{\B(\Omega, 1, R)} \abs{F(x,y)}^p \, dx dy\\
		\le \limsup_{\theta \to 1^{-}} \frac{1-\theta}{\min(1, r^{np+1})} \frac{R^{n+p\theta_0}}{1-\theta_0} \norm{F}_{L^p M^1(\Omega\mid R, \theta_0)}^p
		= 0,
	\end{multline*}
	completing the proof of our claim.
	
	As stated in the introduction, a similar argument for $F \in M^k(\Omega; \R)$ with $k > 1$ becomes more complicated. The reason is that in the complement of $\B(\Omega, k, r)$, one only has an estimate of the form $\abs{x_0 - x_i} \ge r$ for one coordinate $i$ at a time. After such an estimate, one does have an extra $(1-\theta)$-term compared to the amount of $\abs{x_0 - x_i}$-terms, but one would have to rule out an unexpectedly fast-growing singularity caused by the other $\abs{x_0 - x_i}$-terms, possibly using an extra assumption on $F$. For integration functions $I_\omega$ of $L^p$-integrable $k$-forms, such an argument is possible; this is shown in Lemma \ref{lem:only_close_pts_matter}.
\end{rem}

\subsection{The first two cases of Theorem \ref{thm:equiv_of_norms_general}}

Our main objective in this section is to prove Theorem \ref{thm:equiv_of_norms_general}. We begin by proving the part of the result for $\norm{\cdot}_{L^p M^k(\Omega\mid R)}$. By \eqref{eq:bdd_set_equality}, the part of the result for $\norm{\cdot}_{L^p M^k(\Omega)}$ will follow suit. 

We start with the following version of the upper bound, which will be used in the main proof to enable approximation arguments and dominated convergence use.

\begin{lemma}\label{lem:dom_conv_enabler}
	Let $\Omega \subset \R^n$ be an open set, let $\omega \in L^p(\wedge^k T^* \Omega)$, and let $R > 0$. Then there exists a constant $C = C(n, p, k)$ such that for every $\theta \in (0, 1)$,
	\begin{equation}\label{eq:diam_dependent_Lp_bound}
		\norm{I_\omega}_{L^p M^k(\Omega\mid R, \theta)} \leq C R^{k(1-\theta)} \norm{\omega}_{L^p(\Omega)}.
	\end{equation}
	In particular,
	\[
		\norm{I_\omega}_{L^p M^k(\Omega\mid R)}
		\leq C \norm{\omega}_{L^p(\Omega)}.
	\]
\end{lemma}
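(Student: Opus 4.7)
My plan is to deduce \eqref{eq:diam_dependent_Lp_bound} from a pointwise estimate on $I_\omega$, Fubini's theorem, and an explicit polar-coordinate computation, exploiting that the normalizing factor $(1-\theta)^k$ is tuned to absorb precisely the singular behavior of the radial integrals as $\theta \to 1^-$.

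Using the parametrization $\phi_{x_0, \dots, x_k} \colon \Delta_k \to \Delta(x_0, \dots, x_k)$ from Section \ref{subsect:int_of_Lp_forms}, the pullback satisfies
\[
	(\phi_{x_0, \dots, x_k}^* \omega)_s = \omega_{\phi_{x_0, \dots, x_k}(s)}(x_1 - x_0, \dots, x_k - x_0) \, ds_1 \wedge \dots \wedge ds_k.
\]
Expanding $\omega = \sum_I \omega_I \, dx_I$, Hadamard's inequality on each determinant appearing in the evaluation, together with Cauchy--Schwarz in the multi-index $I$, yields the pointwise bound
\[
	\abs{\omega_{\phi(s)}(x_1-x_0, \dots, x_k - x_0)} \lesssim_{n,k} \abs{\omega_{\phi(s)}} \prod_{i=1}^k \abs{x_i - x_0}.
\]
Integrating over $s \in \Delta_k$ and applying H\"older's inequality in $s$ (using that $\Delta_k$ has finite $k$-dimensional Lebesgue measure), I obtain
\[
	\abs{I_\omega(x_0, \dots, x_k)}^p \lesssim_{n,k,p} \prod_{i=1}^k \abs{x_i - x_0}^p \int_{\Delta_k} \abs{\omega_{\phi(s)}}^p \, dm_k(s).
\]

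Substituting this into $\norm{I_\omega}_{L^p M^k(\Omega\mid R, \theta)}^p$, the exponent on each $\abs{x_i - x_0}$ reduces from $-(n+\theta p)$ to $(1-\theta)p - n$. I would then apply Fubini to place the $s$-integral outermost, and for each fixed $s \in \Delta_k$ substitute $y_i := x_i - x_0$ in the $x$-integral. Since $\omega$ has been extended to $\R^n$ by zero, enlarging the domain of integration to $x_0 \in \R^n$ and $y_i \in \B^n(0, R)$ is harmless; a translation $x_0 \mapsto x_0 - \sum_i s_i y_i$ then reduces the $x_0$-integral to $\norm{\omega}_{L^p(\Omega)}^p$, independent of $s$ and the $y_i$. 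The remaining $y_i$-integrals decouple, and in polar coordinates each of them evaluates to
\[
	\int_{\B^n(0, R)} \abs{y}^{(1-\theta)p - n} \, dy = \frac{\cH^{n-1}(\S^{n-1}) R^{(1-\theta)p}}{(1-\theta)p}.
\]
The product of $k$ such factors contributes $(1-\theta)^{-k}$, which exactly cancels the prefactor $(1-\theta)^k$ built into the seminorm, leaving only $R^{k(1-\theta)p}$ and a constant depending on $n, k, p$. Taking $p$-th roots gives \eqref{eq:diam_dependent_Lp_bound}, and the ``in particular'' statement follows by passing to $\limsup_{\theta \to 1^-}$ and noting $R^{k(1-\theta)} \to 1$.

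There is no real analytic obstacle here; the only point requiring care is the bookkeeping of exponents, which reveals that the normalization $(1-\theta)^k$ in \eqref{eq:Lp_for_multifunct_def_loc} is exactly what is needed to cancel the $(1-\theta)^{-k}$ blow-up of the $k$ radial integrals at the borderline exponent. Beyond this cancellation, the argument is only Fubini, a linear change of variables, and a translation that uses the zero-extension of $\omega$ in an essential way.
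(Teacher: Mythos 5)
Your proposal is correct and follows essentially the same route as the paper: the pointwise bound $\abs{\omega_x(v_1,\dots,v_k)}\le\abs{\omega_x}\abs{v_1}\cdots\abs{v_k}$ combined with H\"older in $s$, the change of variables $y_i=x_i-x_0$, the translation in $x_0$ (which the paper phrases as doing the $dx_0$-integral first via Fubini--Tonelli, using the zero extension), and finally polar coordinates in each $y_i$ to produce the $(1-\theta)^{-k}R^{kp(1-\theta)}$ factor. The only superficial difference is the order in which Fubini and the linear change of variables are applied, which does not affect the argument.
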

\begin{proof}
	Recall that $I_\omega$ is defined by extending $\omega$ to all of $\R^n$ by setting $\omega = 0$ outside $\Omega$. With H\"older's inequality and the definition of $I_\omega$, we obtain
	\begin{multline*}
		\norm{I_\omega}_{L^p M^k(\Omega\mid R, \theta)}^p\\
		\lesssim_{p, k}
		\int_{\Delta_k \times \B(\Omega, k, R)} \frac{(1-\theta)^{k} \smallabs{\omega_{\phi_{x_0, \dots, x_k}(s)}(x_1 - x_0, \dots, x_k - x_0)}^p}{(\abs{x_1 - x_0} \cdots \abs{x_k - x_0})^{n + p\theta}}
		\, ds dx_0 \dots dx_k.
	\end{multline*}
	We then note that $\abs{\omega_x(v_1, \dots, v_k)} \leq \abs{\omega_x} \abs{v_1} \cdots \abs{v_k}$. Applying this, we have
	\begin{multline*}
		\int_{\Delta_k \times \B(\Omega, k, R)} \frac{(1-\theta)^{k} \smallabs{\omega_{\phi_{x_0, \dots, x_k}(s)}(x_1 - x_0, \dots, x_k - x_0)}^p}{(\abs{x_1 - x_0} \cdots \abs{x_k - x_0})^{n + p\theta}}
		\, ds dx_0 \dots dx_k\\
		\leq \int_{\Delta_k \times \B(\Omega, k, R)} \frac{(1-\theta)^{k} \smallabs{\omega_{\phi_{x_0, \dots, x_k}(s)}}^p}{(\abs{x_1 - x_0} \cdots \abs{x_k - x_0})^{n - p(1 - \theta)}}
		\, ds dx_0 \dots dx_k.
	\end{multline*}
	We then denote $y_i = x_i - x_0$, $i \in \{1, \dots, k\}$. Note that in our domain of integration, we have $y_i \in \B^n(0, R)$ for all $i$. Thus, by a change of variables, we get
	\begin{multline*}
		\int_{\Delta_k \times \B(\Omega, k, R)} \frac{
				(1-\theta)^{k} \smallabs{\omega_{\phi_{x_0, \dots, x_k}(s)}}^p
			}{
				(\abs{x_1 - x_0} \cdots \abs{x_k - x_0})^{n - p(1 - \theta)}
			} \, ds dx_0 \dots dx_k \\
		\leq \int_{\Delta_k \times \Omega \times [\B^n(0, R)]^k}
			\frac{(1-\theta)^k \abs{\omega_{x_0 + s_1 y_1 + \dots + s_k y_k}}^p}{(\abs{y_1} \cdots \abs{y_k})^{n-p(1-\theta)}} 
			\,  ds dx_0 dy_1 \dots dy_k.
	\end{multline*}
	We then use the Fubini-Tonelli theorem to compute the $dx_0$-integral first. Due to the fact that $\omega$ has been zero extended outside $\Omega$, we obtain the estimate
	\begin{multline*}
		\int_{\Delta_k \times \Omega \times [\B^n(0, R)]^k}
		\frac{(1-\theta)^k \abs{\omega_{x_0 + s_1 y_1 + \dots + s_k y_k}}^p}{(\abs{y_1} \cdots \abs{y_k})^{n-p(1-\theta)}} 
		\, ds dx_0 dy_1 \dots dy_k\\
		\le \int_{\Delta_k \times [\B^n(0, R)]^k}
		\frac{(1-\theta)^k \norm{\omega}_{L^p(\Omega)}^p}{(\abs{y_1} \cdots \abs{y_k})^{n-p(1-\theta)}} 
		\, ds dy_1 \dots dy_k
	\end{multline*} 
	Then, by using polar coordinates on the variables $y_i$, we conclude that 
	\begin{multline*}
		\int_{\Delta_k \times [\B^n(0, R)]^k}
		\frac{(1-\theta)^k \norm{\omega}_{L^p(\Omega)}^p}{(\abs{y_1} \cdots \abs{y_k})^{n-p(1-\theta)}} 
		\, ds dy_1 \dots dy_k\\
		= m_k(\Delta_k) \left( \cH^{n-1}(\S^{n-1}) \int_0^R (1-\theta) r^{p(1-\theta) - 1} \, dr \right)^k \norm{\omega}^p_{L^p(\Omega)}\\
		= \frac{m_k(\Delta_k) \left( \cH^{n-1}(\S^{n-1}) \right)^k}{p^k} R^{kp(1-\theta)} \norm{\omega}^p_{L^p(\Omega)},
	\end{multline*}
	completing the proof of \eqref{eq:diam_dependent_Lp_bound}. The claimed bound on $\norm{I_\omega}_{L^p M^k(\Omega\mid R)}$ then follows by letting $\theta \to 1^{-}$ in the right hand side of \eqref{eq:diam_dependent_Lp_bound}.
\end{proof}

Next, we record a simple technical lemma about limits that nevertheless is used sufficiently many times to warrant a standalone statement.

\begin{lemma}\label{lem:limit_switching_lemma}
	Let $\Omega \subset \R^n$ be a bounded domain, let $\omega\in L^p(\wedge^k T^* \Omega)$, and let $R > 0$. Suppose that a sequence of $k$-forms $\omega_j$ converges to $\omega$ in $L^p(\wedge^k T^* \Omega)$. Moreover, suppose that for every $\theta \in (0, 1)$, $\norm{\cdot}_\theta$ is a seminorm on $M^k(\Omega; \R)$ with $\norm{\cdot}_\theta \le \norm{\cdot}_{L^p M^k (\Omega\mid R, \theta)}$. If $\lim_{\theta \to 1^{-}} \norm{I_{\omega_j}}_\theta$ exists for every $\theta \in (0, 1)$, then $\lim_{\theta \to 1^{-}} \norm{I_{\omega}}_\theta$ exists, and moreover
	\[
		\lim_{\theta \to 1^{-}} \norm{I_{\omega}}_\theta 
		= \lim_{j \to \infty} \lim_{\theta \to 1^{-}} \norm{I_{\omega_j}}_\theta.
	\]
\end{lemma}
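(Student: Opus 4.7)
The plan is to establish a uniform-in-$\theta$ convergence of $\norm{I_{\omega_j}}_\theta$ to $\norm{I_\omega}_\theta$ as $j \to \infty$, and then apply the classical double-limit interchange argument. The two ingredients that make this work are (a) linearity of the map $\omega \mapsto I_\omega$ together with the seminorm property of $\norm{\cdot}_\theta$, and (b) Lemma \ref{lem:dom_conv_enabler} to quantify the seminorm of a difference in terms of the $L^p$-norm of the difference of forms.

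More concretely, first I would observe that since $\omega \mapsto I_\omega$ is linear and $\norm{\cdot}_\theta$ is a seminorm, the reverse triangle inequality yields
\[
	\bigl|\norm{I_\omega}_\theta - \norm{I_{\omega_j}}_\theta\bigr|
	\leq \norm{I_{\omega - \omega_j}}_\theta
	\leq \norm{I_{\omega - \omega_j}}_{L^pM^k(\Omega\mid R, \theta)}.
\]
Applying Lemma \ref{lem:dom_conv_enabler} to $\omega - \omega_j$ then gives
\[
	\bigl|\norm{I_\omega}_\theta - \norm{I_{\omega_j}}_\theta\bigr|
	\leq C R^{k(1-\theta)} \norm{\omega - \omega_j}_{L^p(\Omega)},
\]
where $C = C(n,p,k)$. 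Fixing some $\theta_0 \in (0,1)$ close enough to $1$ that $R^{k(1-\theta)} \leq 2$ for all $\theta \in [\theta_0, 1)$, the right-hand side is bounded by $2C\norm{\omega - \omega_j}_{L^p(\Omega)}$ uniformly in $\theta \in [\theta_0, 1)$. Since $\omega_j \to \omega$ in $L^p$, this shows that $\norm{I_{\omega_j}}_\theta$ converges to $\norm{I_\omega}_\theta$ as $j \to \infty$, uniformly in $\theta \in [\theta_0, 1)$.

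With uniform convergence in hand, the rest is standard. Write $L_j := \lim_{\theta \to 1^-} \norm{I_{\omega_j}}_\theta$, which exists by hypothesis. Given $\eps > 0$, pick $N$ so that $j \geq N$ and $\theta \in [\theta_0, 1)$ imply $\bigl|\norm{I_\omega}_\theta - \norm{I_{\omega_j}}_\theta\bigr| < \eps$. For any $j, j' \geq N$, choosing $\theta$ close enough to $1$ so that also $\bigl|\norm{I_{\omega_j}}_\theta - L_j\bigr| < \eps$ and $\bigl|\norm{I_{\omega_{j'}}}_\theta - L_{j'}\bigr| < \eps$, the triangle inequality through $\norm{I_\omega}_\theta$ forces $|L_j - L_{j'}| < 4\eps$. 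Thus $\{L_j\}$ is Cauchy and converges to some $L \in [0,\infty]$; a parallel estimate shows $\norm{I_\omega}_\theta \to L$ as $\theta \to 1^-$, giving both the existence of the limit and the identity $\lim_{\theta \to 1^-} \norm{I_\omega}_\theta = \lim_{j \to \infty} L_j$.

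There is no real obstacle here; the only slightly delicate point is the explicit $\theta$-dependent factor $R^{k(1-\theta)}$ in Lemma \ref{lem:dom_conv_enabler}, which is why one restricts to $\theta \in [\theta_0, 1)$ to make the bound uniform in $\theta$. Note also that all quantities are a priori in $[0,\infty]$, but the uniform $L^p$-control above guarantees that $\norm{I_{\omega_j}}_\theta$ and $\norm{I_\omega}_\theta$ are finite for $\theta$ near $1$, so the arithmetic of the triangle inequalities is legitimate.
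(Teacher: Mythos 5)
Your proof is correct and follows essentially the same path as the paper's: the reverse triangle inequality for seminorms, Lemma~\ref{lem:dom_conv_enabler} to control $\norm{I_{\omega-\omega_j}}_\theta$ by $\norm{\omega-\omega_j}_{L^p(\Omega)}$ uniformly in $\theta$, and then the standard double-limit interchange. The only cosmetic difference is that the paper bounds the factor $R^{k(1-\theta)}$ by $\max(R^k,1)$ uniformly over all $\theta\in(0,1)$ and simply invokes the Moore--Osgood theorem, whereas you restrict to $\theta\in[\theta_0,1)$ and unpack the Cauchy argument by hand.
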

\begin{proof}
	We compute using Lemma \ref{lem:dom_conv_enabler} and our assumptions on $\norm{\cdot}_\theta$ that
	\begin{multline*}
		\abs{\norm{I_\omega}_{\theta} - \norm{I_{\omega_j}}_{\theta}}
		\leq \norm{I_\omega - I_{\omega_j}}_{\theta}
		= \norm{I_{\omega - \omega_j}}_{\theta}\\
		\le \norm{I_{\omega - \omega_j}}_{L^p M^k(\Omega\mid R, \theta)}
		\leq C(p, n, k) \max(R^k, 1) \norm{\omega - \omega_j}_{L^p(\Omega)}.
	\end{multline*}
	Hence, we have $\lim_{j \to \infty} \norm{I_{\omega_j}}_{\theta} = \norm{I_\omega}_\theta$ uniformly in $\theta$. Thus, the claim follows by using the Moore-Osgood theorem of interchanging limits.
\end{proof}

Following this, we show that for $\omega \in L^p(\wedge^k T^* \Omega)$, the definition of the seminorm $\norm{I_\omega}_{L^p M^k(\Omega)}$ only depends on tuples $(x_0, \dots, x_k)$ where the points are sufficiently close to each other. 

\begin{lemma}\label{lem:only_close_pts_matter}
	Let $\Omega \subset \R^n$ be a bounded domain, let $R > 0$, and let $\omega \in L^p(\wedge^k T^* \Omega)$, where we extend $\omega$ to $L^p(\wedge^k T^* \R^n)$ by setting $\omega = 0$ outside $\Omega$. Then, for
	\begin{equation}\label{eq:eps_theta_def}
		\eps_\theta = e^{-1/\sqrt{1-\theta}},
	\end{equation} 
	we have
	\begin{equation}\label{eq:large_dist_integral}
		\lim_{\theta \to 1^{-}} \left( \norm{I_\omega}_{L^p M^k(\Omega\mid R, \theta)}^p - \norm{I_\omega}_{L^p M^k(\Omega\mid R\eps_\theta, \theta)}^p  \right) = 0.
	\end{equation}
\end{lemma}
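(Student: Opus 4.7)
The plan is to bound the difference directly by a mild variant of the computation in Lemma~\ref{lem:dom_conv_enabler}. Note that for $k = 0$ the statement is vacuous, since $\B(\Omega, 0, R) = \Omega$ for every $R > 0$; hence I may assume $k \geq 1$. For such $k$, the difference in \eqref{eq:large_dist_integral} is the integral of a non-negative integrand over
\[
	A_\theta = \B(\Omega, k, R) \setminus \B(\Omega, k, R\eps_\theta),
\]
i.e.\ the set of tuples $(x_0, \dots, x_k) \in \Omega^{k+1}$ where $\max_i \abs{x_i - x_0} < R$ but at least one $\abs{x_i - x_0} \geq R\eps_\theta$. Since the integrand is symmetric under permutations of $x_1, \dots, x_k$, a union bound allows one to estimate the integral over $A_\theta$ by $k$ times the integral over $A_\theta^{(1)} = \{R\eps_\theta \leq \abs{x_1 - x_0} < R,\ \abs{x_i - x_0} < R \text{ for } i \geq 2\}$.

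Next, I would invoke the same pointwise bound for $I_\omega$ used in the proof of Lemma~\ref{lem:dom_conv_enabler}: applying H\"older's inequality on $\Delta_k$ and the estimate $\abs{\omega_x(v_1, \dots, v_k)} \leq \abs{\omega_x}\abs{v_1} \cdots \abs{v_k}$, one reduces the bound on the integral over $A_\theta^{(1)}$ to
\[
	(1-\theta)^k \int_{\Delta_k \times A_\theta^{(1)}} \frac{\bigl\lvert \omega_{\phi_{x_0, \dots, x_k}(s)} \bigr\rvert^p}{\bigl(\abs{x_1 - x_0} \cdots \abs{x_k - x_0}\bigr)^{n - p(1-\theta)}} \, ds\, dx_0 \dots dx_k,
\]
up to a multiplicative constant $C(n, p, k)$. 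Substituting $y_i = x_i - x_0$, integrating first in $x_0$ using the zero extension of $\omega$ (which yields a factor $\norm{\omega}_{L^p(\Omega)}^p$), and evaluating the remaining radial integrals in polar coordinates, I obtain
\[
	(1 - \theta) \int_{\abs{y_i} < R} \abs{y_i}^{p(1-\theta) - n} \, dy_i = \frac{\cH^{n-1}(\S^{n-1})}{p}\, R^{p(1-\theta)}, \quad i \geq 2,
\]
and an analogous expression for the restricted $y_1$-integral equal to $\cH^{n-1}(\S^{n-1}) R^{p(1-\theta)} (1 - \eps_\theta^{p(1-\theta)})/p$.

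Combining, the difference in \eqref{eq:large_dist_integral} is bounded by
\[
	C(n, p, k)\, R^{kp(1-\theta)} \bigl(1 - \eps_\theta^{p(1-\theta)}\bigr) \norm{\omega}_{L^p(\Omega)}^p.
\]
The choice $\eps_\theta = e^{-1/\sqrt{1-\theta}}$ gives $\eps_\theta^{p(1-\theta)} = e^{-p\sqrt{1-\theta}}$, which tends to $1$ as $\theta \to 1^{-}$, while $R^{kp(1-\theta)} \to 1$. Hence the whole upper bound vanishes, proving \eqref{eq:large_dist_integral}. There is no real obstacle in this proof; the only delicate point is simply recognizing that the specific form of $\eps_\theta$ in \eqref{eq:eps_theta_def} is tailored exactly so that $\eps_\theta^{p(1-\theta)} \to 1$ as $\theta \to 1^{-}$, making this the correct threshold for the truncation.
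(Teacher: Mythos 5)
Your proof is correct, and it takes a genuinely different route from the paper. The paper's proof of Lemma~\ref{lem:only_close_pts_matter} first reduces to $\omega \in C_0(\wedge^k T^* \Omega)$ by means of Lemma~\ref{lem:limit_switching_lemma} (applied to the difference-seminorm $\norm{\cdot}_\theta$), and then for continuous compactly supported $\omega$ it bounds the inner integral pointwise in $x_0$ by $C \norm{\omega}_{L^\infty}^p R^{kp(1-\theta)} (1 - \eps_\theta^{p(1-\theta)})$, obtaining convergence to zero uniformly in $x_0$, and finally observes that the $x_0$-integral is supported in the compact set $\B^n(\spt\omega, R)$. You instead follow the proof of Lemma~\ref{lem:dom_conv_enabler} verbatim but restricted to the tail set $A_\theta$: after the H\"older step and the pointwise covector bound, the change of variables $y_i = x_i - x_0$, Tonelli, and translation invariance of Lebesgue measure produce the factor $\norm{\omega}_{L^p(\Omega)}^p$ directly (thanks to the zero extension), and the restricted radial integral in $y_1$ yields the decaying factor $1 - \eps_\theta^{p(1-\theta)}$ without any continuity or compact-support assumption on $\omega$. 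Your route is more economical: it needs neither the density argument via Lemma~\ref{lem:limit_switching_lemma} nor the $L^\infty$-bound, and keeps the entire estimate at the $L^p$ level. Both arguments hinge on the same final observation, namely that $\eps_\theta^{p(1-\theta)} = e^{-p\sqrt{1-\theta}} \to 1$, and both are correct. Two small but worthwhile checks that you implicitly use and should be aware of: the integrand is indeed symmetric in $x_1, \dots, x_k$ (since $\abs{I_\omega}^p$ is symmetric even though $I_\omega$ is alternating), which justifies the union bound with the factor $k$; and the set constraint $x_i \in \Omega$ may be relaxed to $x_0 + y_i \in \R^n$ only because $\omega$ has been zero-extended, which is what makes the $x_0$-integral collapse to $\norm{\omega}_{L^p(\Omega)}^p$.
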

\begin{proof}
	We first show that we may assume that $\omega \in C_0(\wedge^k T^* \Omega)$. Indeed, since  $C_0(\wedge^k T^* \Omega)$ is dense in $L^p(\wedge^k T^* \Omega)$, we can take an approximating sequence $\omega_j \in C_0(\wedge^k T^* \Omega)$ of $\omega$. For $F \in M^k(\Omega; \R)$, we denote
	\begin{multline*}
		\norm{F}_\theta^p := \norm{F}_{L^p M^k(\Omega\mid R, \theta)}^p - \norm{F}_{L^p M^k(\Omega\mid R\eps_\theta, \theta)}^p\\
		= \int_{\B(\Omega, k, R) \setminus \B(\Omega, k, R\eps_\theta)} \frac{(1 - \theta)^k \abs{F(x_0, \dots, x_k)}^p}{\left(\abs{x_1 - x_0} \cdots \abs{x_k - x_0}\right)^{n + \theta p}} \, dx_0 \ldots dx_k.
	\end{multline*}
	Then $\norm{\cdot}_\theta$ is a seminorm on $M^k(\Omega; \R)$ with $\norm{\cdot}_\theta \le \norm{\cdot}_{L^p M^k(\Omega\mid R, \theta)}$. Thus, by Lemma \ref{lem:limit_switching_lemma} and the assumption that \eqref{eq:large_dist_integral} holds for $\omega_j$, we have 
	\[
		\lim_{\theta \to 1^{-}} \norm{\omega}_\theta = \lim_{j \to \infty} 0 = 0,
	\]
	implying that \eqref{eq:large_dist_integral} holds for $\omega$. 
	
	Thus, we assume $\omega \in C_0(\wedge^k T^* \Omega)$. We note that by the Fubini-Tonelli theorem, we have
	\begin{multline*}
		\int_{\B(\Omega, k, R) \setminus \B(\Omega, k, R\eps_\theta)} \frac{(1 - \theta)^k \abs{I_\omega(x_0, \dots, x_k)}^p}{\left(\abs{x_1 - x_0} \abs{x_2 - x_0} \cdots \abs{x_k - x_0}\right)^{n + \theta p}} \, dx_0 \dots dx_k\\
		\le \int_{\Omega} \int_{(\B^n(x_0, R))^k \setminus (\B^n(x_0, R\eps_\theta))^k} 
		\frac{(1-\theta)^k \abs{I_\omega(x_0, \dots, x_k)}^p}{(\abs{x_0 - x_1} \cdots \abs{x_0  - x_k})^{n+p\theta}} \, dx_1 \dots dx_k dx_0.
	\end{multline*} 
	We fix $x_0 \in \Omega$, and similarly as in the proof of Lemma \ref{lem:dom_conv_enabler}, we make changes of variables $y_i = x_i - x_0$ for $i \in \{1, \dots, k\}$ to obtain
	{\allowdisplaybreaks\begin{align}\label{eq:change_of_vars_step}
			&\int_{(\B^n(x_0, R))^k \setminus (\B^n(x_0, R\eps_\theta))^k} 
				\frac{(1-\theta)^k \abs{I_\omega(x_0, \dots, x_k)}^p}{(\abs{x_0 - x_1} \cdots \abs{x_0  - x_k})^{n+p\theta}} \, dx_1 \dots dx_k\\
			&\begin{multlined}
				\quad= \int_{(\B^n(0, R))^k \setminus (\B^n(0, D\eps_\theta))^k} 
				\frac{(1-\theta)^k}{(\abs{y_1} \cdots \abs{y_k})^{n+p\theta}} \\
				\hspace{1.4cm}\abs{ \int_{\Delta_k} \omega_{x_0 + s_1 y_1 + \dots + s_k y_k}
					\left( y_1, \dots, y_k \right) ds }^p dy_1 \dots dy_k
			\end{multlined}\nonumber\\
			&\begin{multlined}
				\quad= \int_{(\B^n(0, R))^k \setminus (\B^n(0, D\eps_\theta))^k} 
				\frac{(1-\theta)^k}{(\abs{y_1} \cdots \abs{y_k})^{n-p(1 -\theta)}} \\
				\hspace{1.4cm} \abs{\int_{\Delta_k} \omega_{x_0 + s_1 y_1 + \dots + s_k y_k}
					\left( \frac{y_1}{\abs{y_1}}, \dots, \frac{y_k}{\abs{y_k}} \right) ds }^p
				dy_1 \dots dy_k.
			\end{multlined}\nonumber
	\end{align}}
	We then observe that $\lim_{\theta \to 1^{-}} \eps_\theta^{p(1-\theta)} = 1$. Thus, using the fact that $\omega \in C_0(\wedge^k T^* \Omega)$ implies $\norm{\omega}_{L^\infty(\R^n)} < \infty$, we obtain for every $x_0 \in \Omega$ and $i \in \{1, \dots, k\}$ that
	\begin{align*}
			&\int_{(\B^n(x_0, R))^{k-1}}
			\int_{(\B^n(x_0, R))\setminus \B^n(0,D\eps_\theta)}  \frac{(1-\theta)^k}{(\abs{y_1} \cdots \abs{y_k})^{n-p(1-\theta)}}\\
			&\qquad \abs{ \int_{\Delta_k} \omega_{x_0 + s_1 y_1 + \dots + s_k y_k} \left( \frac{y_1}{\abs{y_1}}, \dots, \frac{y_k}{\abs{y_k}} \right) ds }^p \, dy_i dy_1 \dots \hat{dy_i} \dots dy_k\\
			&\quad\leq C(n, k, p)  \norm{\omega}^p_{L^\infty(\R^n)}
			R^{(k-1)p(1-\theta)} 
			\left( \int_{R\eps_\theta}^R (1-\theta) r^{p(1-\theta) - 1} \, dr \right)\\
			&\quad= C(n, k, p) \norm{\omega}^p_{L^\infty(\R^n)} R^{kp(1-\theta)} \left( 1 - \eps_\theta^{p(1-\theta)}  \right)
			\xrightarrow[\theta \to 1^{-}]{} 0.
	\end{align*}
	
	Thus, we have
	\begin{multline}\label{eq:integral_limit_1}
		\lim_{\theta \to 1^{-}} \int_{(\B^n(x_0, R))^k \setminus (\B^n(x_0, R\eps_\theta))^k} \frac{(1 - \theta)^k \abs{I_\omega(x_0, \dots, x_k)}^p \, dx_1 \dots dx_k}{\left(\abs{x_1 - x_0} \cdots \abs{x_k - x_0}\right)^{n + \theta p}}\\  = 0
	\end{multline}
	for all $x_0 \in \Omega$, and the convergence is uniform with respect to $x_0$. In addition to this, we note that since $\omega \in C_0(\wedge^k T^* \Omega)$, the integral in \eqref{eq:integral_limit_1} vanishes when $x_0$ is outside the compact set $\B^n(\spt \omega, R)$. Thus, the claimed \eqref{eq:large_dist_integral} follows.
\end{proof}

We are ready to prove the first part of Theorem \ref{thm:equiv_of_norms_general}.

\begin{prop}\label{prop:equiv_case_unbdd_regular}
	Let $\Omega \subset \R^n$ be open, let $R > 0$, and let $\omega \in L^p(\wedge^k T^* \Omega)$, where we extend $\omega$ to $L^p(\wedge^k T^* \R^n)$ by setting $\omega = 0$ outside $\Omega$. 
	Then there exists a constant $K = K(p, k)$ such that
	\[
		[I_\omega]_{L^p M^k(\Omega\mid R)} 
		=\norm{I_\omega}_{L^p M^k(\Omega\mid R)} 
		= K \bigl\lVert\abs{\omega}_{\S,p}\bigr\rVert_{L^p(\Omega)},
	\]
	where the norm $\abs{\alpha}_{\S,p}$ for $k$-covectors $\alpha \in \operatorname{Alt}_k(V)$ on an $n$-dimensional normed space $V$ is defined by
	\[
		\abs{\alpha}_{\S,p} = \left( \int_{\{\abs{v_1} = \ldots = \abs{v_k} = 1\}} \abs{\alpha(v_1, \dots, v_k)}^p \, d\cH^{n-1}(v_1) \dots d\cH^{n-1}(v_k) \right)^\frac{1}{p}.
	\]
	In particular,  $\norm{I_\omega}_{L^p M^k(\Omega\mid R)}$ is obtained as a limit, and there exists a constant $C = C(p, n, k)$ such that
	\[
		C^{-1} \norm{\omega}_{L^p(\Omega)} \leq \norm{I_\omega}_{L^p M^k(\Omega\mid R)}
		\leq C \norm{\omega}_{L^p(\Omega)}.
	\]
\end{prop}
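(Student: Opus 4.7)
The plan is to compute the limit $\lim_{\theta \to 1^-} \norm{I_\omega}_{L^p M^k(\Omega \mid R, \theta)}^p$ directly via a Taylor-type approximation on small simplices. First I will reduce to continuous compactly supported forms: by density of $C_0(\wedge^k T^* \Omega)$ in $L^p(\wedge^k T^* \Omega)$ together with Lemma \ref{lem:limit_switching_lemma}, once the claimed equality with $K \norm{\abs{\omega}_{\S,p}}_{L^p(\Omega)}$ is proven for such $\omega$, it will propagate to all of $L^p(\wedge^k T^* \Omega)$; for this I need the right-hand side to be continuous in $L^p$-convergence, which is immediate from the finite-dimensional norm equivalence between $\abs{\cdot}_{\S,p}$ and the Euclidean norm on $\operatorname{Alt}_k(\R^n)$. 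That same norm equivalence is what will yield the two-sided bound $C^{-1} \norm{\omega}_{L^p(\Omega)} \le \norm{I_\omega}_{L^p M^k(\Omega \mid R)} \le C \norm{\omega}_{L^p(\Omega)}$ at the end.

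Next, for $\omega \in C_0(\wedge^k T^* \Omega)$, Lemma \ref{lem:only_close_pts_matter} allows me to replace $\B(\Omega, k, R)$ by $\B(\Omega, k, R\eps_\theta)$ without changing the limit, so only simplices of diameter at most $R\eps_\theta \to 0$ contribute. Using the pull-back formula, I write
\[
I_\omega(x_0, x_0 + y_1, \dots, x_0 + y_k) = \int_{\Delta_k} \omega_{\phi(s)}(y_1, \dots, y_k) \, ds
\]
with $\phi(s) = x_0 + s_1 y_1 + \dots + s_k y_k$, and decompose this as $\frac{1}{k!} \omega_{x_0}(y_1, \dots, y_k) + E$, where uniform continuity of $\omega$ gives $\abs{E} \le \eta_\omega(R\eps_\theta) \abs{y_1} \cdots \abs{y_k}/k!$ with $\eta_\omega(r) \to 0$ as $r \to 0^+$.

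The core computation then is to substitute this decomposition into the integral, switch to polar coordinates $y_i = r_i v_i$, and exploit the homogeneity $\abs{\omega_{x_0}(r_1 v_1, \dots, r_k v_k)}^p = (r_1 \cdots r_k)^p \abs{\omega_{x_0}(v_1, \dots, v_k)}^p$. After the cancellation $r_i^{n-1} \cdot r_i^p \cdot r_i^{-(n+\theta p)} = r_i^{p(1-\theta)-1}$, the leading contribution becomes
\[
\frac{1}{(k!)^p} \int_\Omega \abs{\omega_{x_0}}_{\S,p}^p \, dx_0 \cdot \prod_{i=1}^k \int_0^{R\eps_\theta} (1-\theta) r^{p(1-\theta)-1} \, dr.
\]
Each inner integral equals $(R\eps_\theta)^{p(1-\theta)}/p$, and since $\eps_\theta^{p(1-\theta)} = e^{-p\sqrt{1-\theta}} \to 1$ and $R^{p(1-\theta)} \to 1$ as $\theta \to 1^-$, the leading part converges to $(k!)^{-p} p^{-k} \norm{\abs{\omega}_{\S,p}}_{L^p(\Omega)}^p$. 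The error term is bounded by $\eta_\omega(R\eps_\theta)^p$ times the same polar integral and hence vanishes in the limit. Combining these via the $p$-norm triangle inequality on $\norm{\cdot}_{L^p M^k(\Omega \mid R\eps_\theta, \theta)}$ identifies the constant $K = (k!)^{-1} p^{-k/p}$.

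The main obstacle is making this Taylor expansion survive the singular kernel $(\abs{y_1} \cdots \abs{y_k})^{-(n+\theta p)}$; what saves the argument is the delicate balance between $(1-\theta)^k$ and the factor $(1-\theta)^{-k}$ that arises from $\prod_i \int_0^{R\eps_\theta} r^{p(1-\theta)-1} \, dr$, so the kernel is exactly at the threshold for convergence. Because the computation delivers an actual limit rather than a $\limsup$ or $\liminf$, the equalities $[I_\omega]_{L^p M^k(\Omega \mid R)} = \norm{I_\omega}_{L^p M^k(\Omega \mid R)} = K \norm{\abs{\omega}_{\S,p}}_{L^p(\Omega)}$ come out simultaneously, and the $L^p$ comparison on $\omega$ follows from finite-dimensional norm equivalence.
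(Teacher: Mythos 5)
Your proposal is correct and takes essentially the same route as the paper's proof: reduce to $C_0(\wedge^k T^* \Omega)$ via Lemma \ref{lem:limit_switching_lemma} and density, localize to small simplices via Lemma \ref{lem:only_close_pts_matter}, change variables, pass to polar coordinates, and exploit the $k$-linear homogeneity of $\omega_{x_0}$ together with uniform continuity. The only difference is cosmetic: you phrase the key step as a leading-term-plus-error decomposition with a modulus-of-continuity bound, whereas the paper states it as uniform convergence of the $\Delta_k$-averaged integrand to $m_k(\Delta_k)\,\omega_{x_0}(v_1,\dots,v_k)$, and both yield the same constant $K = p^{-k/p}/k!$.
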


\begin{proof}
	We start as in Lemma \ref{lem:only_close_pts_matter}, by showing that we may assume that $\omega \in C_0(\wedge^k)$. Indeed, suppose the result has been shown for such $\omega$. Since $C_0(\wedge^k T^* \Omega)$ is dense in $L^p(\wedge^k T^* \Omega)$, we can take an approximating sequence $\omega_j \in C_0(\wedge^k T^* \Omega)$ of $\omega$. By our assumption that the theorem statement holds for $\omega_j$, we have that $\norm{I_{\omega_j}}_{L^p M^k(\Omega\mid R)}$ is obtained as a limit for every $j$. Thus, we can use Lemma \ref{lem:limit_switching_lemma} with $\norm{\cdot}_\theta = \norm{\cdot}_{L^p M^k(\Omega\mid R, \theta)}$ to obtain that
	\[
		\lim_{\theta \to 1^{-1}} \norm{I_\omega}_{L^p M^k(\Omega\mid R, \theta)}
		= \lim_{j \to \infty} \norm{I_{\omega_j}}_{L^p M^k(\Omega \mid R)}
		= \lim_{j \to \infty} K \lVert\abs{\omega_j}_{\S,p}\rVert_{L^p(\Omega)}.
	\] 
	On the other hand, since $\abs{\cdot}_{\S, p}$ is comparable with the Grassmann norm $\abs{\cdot}$ due to both being norms on a finite-dimensional vector space, we have
	\begin{multline*}
		\big\lvert \smallnorm{\abs{\omega}_{\S,p}}_{L^p(\Omega)} - 
			\smallnorm{\abs{\omega_n}_{\S,p}}_{L^p(\Omega)} \big\rvert
		\leq \big\lVert \abs{\omega}_{\S,p} - \abs{\omega_n}_{\S,p} \big\rVert_{L^p(\Omega)}\\
		\leq \big\lVert \abs{\omega - \omega_n}_{\S,p} \big\rVert_{L^p(\Omega)}
		\leq C(p,n,k) \norm{\omega-\omega_n}_{L^p(\Omega)}
		\xrightarrow[n \to \infty]{} 0.
	\end{multline*} 
	Thus, we also have $\lim_{j \to \infty} \lVert\abs{\omega_j}_{\S,p}\rVert_{L^p(\Omega)} = \lVert\abs{\omega}_{\S,p}\rVert_{L^p(\Omega)}$, completing the reduction to the continuous compactly supported case.
	
	Thus, we may assume $\omega \in C_0(\wedge^k T^* \Omega) \cap L^p(\wedge^k T^* \Omega)$. By Lemma \ref{lem:only_close_pts_matter} and the Fubini-Tonelli theorem, we have
	\begin{multline}\label{eq:U_reduction_step}
		\lim_{\theta \to 1^{-}} \norm{I_\omega}_{L^p M^k(\Omega\mid R, \theta)}^p
		= \lim_{\theta \to 1^{-}} \norm{I_\omega}_{L^p M^k(\Omega\mid R\eps_\theta, \theta)}^p\\
		= \lim_{\theta \to 1^{-}} \int_{\Omega} \int_{\Omega^k \cap (\B^n(x_0, R\eps_\theta))^k} \frac{(1 - \theta)^k \abs{I_\omega(x_0, \dots, x_k)}^p}{\left(\abs{x_1 - x_0} \cdots \abs{x_k - x_0}\right)^{n + \theta p}} \, dx_1\dots dx_k dx_0
	\end{multline}
	if the latter limit exists. 
	We then fix $x_0 \in \Omega$. Noting that $\lim_{\theta \to 1^{-}} \eps_\theta = 0$, we suppose that $\theta$ is close enough to one that $\B^n(0,R\eps_\theta) \subset \Omega$. Similarly as in \eqref{eq:change_of_vars_step}, we get via a change of variables $y_i = x_i - x_0$ that
	{\allowdisplaybreaks\begin{align*}
		&\int_{\Omega^k \cap (\B^n(x_0, R\eps_\theta))^k} \frac{(1 - \theta)^k \abs{I_\omega(x_0, \dots, x_k)}^p}{\left(\abs{x_1 - x_0} \cdots \abs{x_k - x_0}\right)^{n + \theta p}} \, dx_1\dots dx_k\\
		&\begin{multlined}
			\qquad= \int_{(\B^n(0, R\eps_\theta))^k} 
				\frac{(1-\theta)^k}{(\abs{y_1} \cdots \abs{y_k})^{n+p\theta}} \\
			\hspace{3cm} \abs{\int_{\Delta_k} \omega_{x_0 + s_1 y_1 + \dots + s_k y_k}
				\left( \frac{y_1}{\abs{y_1}}, \dots, \frac{y_k}{\abs{y_k}} \right) ds }^p
			dy_1 \dots dy_k.
		\end{multlined}
	\end{align*}}
	
	We can then again apply polar coordinates to $y_i$ to obtain
		\begin{equation*}\begin{aligned}
			&\int_{[\B^n(0,R\eps_\theta)]^k}  \frac{(1-\theta)^k}{(\abs{y_1} \cdots \abs{y_k})^{n-p(1-\theta)}}\\
			&\hspace{1.6cm} \abs{ \int_{\Delta_k} \omega_{x_0 + s_1 y_1 + \dots + s_k} \left( \frac{y_1}{\abs{y_1}}, \dots, \frac{y_k}{\abs{y_k}} \right) ds }^p \, dy_1 \dots dy_k\\
			&\quad= \int_{(\S^{n-1})^k}  \int_{[0, R\eps_\theta]^k} (1-\theta)^k (r_1 \cdots r_k)^{p(1-\theta) - 1}\\
			&\quad\hspace{1.6cm}\abs{ \int_{\Delta_k} \omega_{x_0 + s_1 r_1 v_1 + \dots + s_k r_k v_k} \left( v_1, \dots, v_k \right) ds }^p
			dr_1 \dots dr_k dv_1 \dots dv_k,
	\end{aligned}\end{equation*}
	where the $v_i$-integrals are respect to spherical volume. Now, since $\omega \in C_0(\wedge^k T^*\R^n)$, its coefficients are uniformly continuous, and thus
	\[
		\lim_{\max(\abs{r_i}) \to 0 }\int_{\Delta_k} \omega_{x_0 + s_1 r_1 v_1 + \dots + s_k r_k v_k} \left( v_1, \dots, v_k \right) ds
		= m_k(\Delta_k) \omega_{x_0}(v_1, \dots, v_k)
	\]
	uniformly with respect to $x_0$ and $(v_1, \dots, v_k)$. Because of this and the previously used fact that
	\[
		\lim_{\theta \to 1^{-}} \int_0^{R\eps_\theta} (1-\theta) r^{p(1-\theta) - 1} \, dr
		= \lim_{\theta \to 1^{-}} \frac{1}{p} (R\eps_\theta)^{p(1-\theta)} = \frac{1}{p},
	\] 
	we obtain
	\begin{equation}\label{eq:integral_breakdown_part_2}\begin{aligned}
			&\lim_{\theta \to 1^{-}}\int_{[\B^n(0,R\eps_\theta)]^k}  \frac{(1-\theta)^k}{(\abs{y_1} \cdots \abs{y_k})^{n-p(1-\theta)}}\\
			&\quad\qquad \abs{ \int_{\Delta_k} \omega_{x_0 + s_1 y_1 + \dots + s_k} \left( \frac{y_1}{\abs{y_1}}, \dots, \frac{y_k}{\abs{y_k}} \right) ds }^p \, dy_1 \dots dy_k\\
			&\quad= \int_{(\S^{n-1})^k} p^{-k} m_k^p(\Delta_k) \abs{\omega_{x_0}(v_1, \dots, v_k)}^p \, dv_1 \dots dv_k\\
			&\quad= p^{-k} m_k^p(\Delta_k) \abs{\omega_{x_0}}_{\S, p}^p,
	\end{aligned}\end{equation}
	where the convergence is uniform with respect to $x_0$.
	
	In conclusion, we have
	\begin{multline}\label{eq:integral_limit_2}
		\lim_{\theta \to 1^{-}} \int_{\Omega^k \cap (\B^n(x_0, R\eps_\theta))^k} \frac{(1 - \theta)^k \abs{I_\omega(x_0, \dots, x_k)}^p}{\left(\abs{x_1 - x_0}  \cdots \abs{x_k - x_0}\right)^{n + \theta p}} \, dx_1 \dots dx_k\\
		=  p^{-k} m_k^p(\Delta_k) \abs{\omega_{x_0}}_{\S, p}^p
	\end{multline}
	for all $x_0 \in \Omega$, and this convergence is uniform in $x_0$. Moreover, similarly to the end of the proof of Lemma \ref{lem:only_close_pts_matter}, both $\omega_{x_0}$ and the integrals on the left hand side of \eqref{eq:integral_limit_2} vanish when $x_0$ is outside the compact set $\B^n(\spt \omega, R)$. Thus, we obtain that
	\begin{multline*}
		\lim_{\theta \to 1^{-}} \int_{\Omega} \int_{\Omega^k \cap (\B^n(x_0, R\eps_\theta))^k} \frac{(1 - \theta)^k \abs{I_\omega(x_0, \dots, x_k)}^p}{\left(\abs{x_1 - x_0} \cdots \abs{x_k - x_0}\right)^{n + \theta p}} \, dx_1\dots dx_k dx_0\\
		= p^{-k} m_k^p(\Delta_k) \int_{\Omega}  \abs{\omega}_{\S, p}^p \, \vol_n.
	\end{multline*}
	By combining this with \eqref{eq:U_reduction_step}, we conclude that 
	\[
		\norm{I_\omega}_{L^p M^k(\Omega)} = p^{-\frac{k}{p}} m_k(\Delta_k) \smallnorm{ \abs{\omega}_{\S, p} }_{L^p(\Omega)}
	\]
	and that $\norm{I_\omega}_{L^p M^k(\Omega)}$ is obtained as a limit. Since $\abs{\omega}_{\S, p}$ and $\abs{\omega}$ are uniformly comparable, the claimed two-sided estimate also follows immediately.
\end{proof}

We immediately obtain the following case of Theorem \ref{thm:equiv_of_norms_general} as a corollary, which notably completes the proof of Theorem \ref{thm:equiv_of_norms_basic}.

\begin{cor}\label{cor:equiv_case_bdd_regular}
	Let $\Omega \subset \R^n$ be open and bounded, and let $\omega \in L^p(\wedge^k T^* \Omega)$, where we extend $\omega$ to $L^p(\wedge^k T^* \R^n)$ by setting $\omega = 0$ outside $\Omega$. Then
	\[
		[I_\omega]_{L^p M^k(\Omega)} 
		=\norm{I_\omega}_{L^p M^k(\Omega)} 
		= K \bigl\lVert\abs{\omega}_{\S,p}\bigr\rVert_{L^p(\Omega)},
	\]
	where $K = K(p, k)$ and $\abs{\cdot}_{\S,p}$ are as in Proposition \ref{prop:equiv_case_unbdd_regular}.
\end{cor}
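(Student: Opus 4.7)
The proof is essentially a one-line reduction to Proposition \ref{prop:equiv_case_unbdd_regular}. My plan is to exploit the identity \eqref{eq:bdd_set_equality}, which applies whenever $E = \Omega$ is bounded: since every tuple $(x_0, \ldots, x_k) \in \Omega^{k+1}$ automatically satisfies $\max_{i=1, \ldots, k} \abs{x_i - x_0} \le \diam \Omega$, the region of integration $\B(\Omega, k, \diam \Omega)$ equals all of $\Omega^{k+1}$. Hence for every $\theta \in (0,1)$,
\[
    \norm{I_\omega}_{L^p M^k(\Omega, \theta)} = \norm{I_\omega}_{L^p M^k(\Omega \mid \diam \Omega, \theta)}.
\]

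Taking $\limsup$ and $\liminf$ as $\theta \to 1^-$ on both sides gives
\[
    \norm{I_\omega}_{L^p M^k(\Omega)} = \norm{I_\omega}_{L^p M^k(\Omega \mid \diam \Omega)}
    \quad \text{and} \quad
    [I_\omega]_{L^p M^k(\Omega)} = [I_\omega]_{L^p M^k(\Omega \mid \diam \Omega)}.
\]
Applying Proposition \ref{prop:equiv_case_unbdd_regular} with $R = \diam \Omega$ then yields
\[
    [I_\omega]_{L^p M^k(\Omega)} = \norm{I_\omega}_{L^p M^k(\Omega)} = K \bigl\lVert \abs{\omega}_{\S,p} \bigr\rVert_{L^p(\Omega)},
\]
with the same constant $K = K(p,k)$. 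There is no real obstacle here; the only points to verify are the trivial set-theoretic identity $\B(\Omega, k, \diam \Omega) = \Omega^{k+1}$ and that boundedness of $\Omega$ is exactly the hypothesis needed to invoke \eqref{eq:bdd_set_equality}. The proposition does the rest.
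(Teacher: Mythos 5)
Your argument is correct and is precisely the paper's own proof: invoke \eqref{eq:bdd_set_equality} to identify $\norm{\cdot}_{L^p M^k(\Omega,\theta)}$ with $\norm{\cdot}_{L^p M^k(\Omega\mid\diam\Omega,\theta)}$ for bounded $\Omega$, then apply Proposition~\ref{prop:equiv_case_unbdd_regular} with $R=\diam\Omega$. The only difference is that you spell out the trivial set identity $\B(\Omega,k,\diam\Omega)=\Omega^{k+1}$, which the paper leaves implicit.
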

\begin{proof}
	Due to \eqref{eq:bdd_set_equality}, the claim follows from Proposition \ref{prop:equiv_case_unbdd_regular} with $R = \diam \Omega$.
\end{proof}

\subsection{The last two cases of Theorem \ref{thm:equiv_of_norms_general}}

Two cases of Theorem \ref{thm:equiv_of_norms_general} remain. Our strategy is again to resolve the case $\norm{\cdot}_{L^p M^{k, c}(\Omega\mid R)}$, and then apply \eqref{eq:bdd_set_equality_alt} to obtain the result for $\norm{\cdot}_{L^p M^{k, c}(\Omega)}$. Before we start proving the cases, we require the following lemma.

\begin{lemma}\label{lem:compact_support_lemma}
	Let $\Omega \subset \R^n$ be open, let $R > 0$, and let $\omega \in L^p(\wedge^k T^* \Omega)$, where we extend $\omega$ to $L^p(\wedge^k T^* \R^n)$ by setting $\omega = 0$ outside $\Omega$. Suppose that $\spt \omega$ is a compact subset of $\Omega$. Then for every open $V \subset \Omega$ with $\spt \omega \subset V$, we have
	\[
		[I_\omega]_{L^p M^k(V\mid R)} = \norm{I_\omega}_{L^p M^k(V\mid R)} = \norm{I_\omega}_{L^p M^k(\Omega \mid R)}.
	\]
\end{lemma}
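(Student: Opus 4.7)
The proof is a short direct application of Proposition \ref{prop:equiv_case_unbdd_regular}. The central observation is that since $\spt \omega$ is compactly contained in $V$, the form $\omega\vert_V \in L^p(\wedge^k T^* V)$ extended by zero to $\R^n$ agrees almost everywhere with $\omega \in L^p(\wedge^k T^* \Omega)$ extended by zero to $\R^n$: both equal $\omega$ on $V$, and both vanish a.e.\ on $\R^n \setminus V$ (on $\Omega \setminus V$ this uses that $\omega = 0$ a.e.\ there, thanks to $\spt \omega \subset V$; on $\R^n \setminus \Omega$ it is by construction). Consequently, the integration functions $I_{\omega\vert_V}$ and $I_\omega$ agree almost everywhere as $k$-multifunctions on $V^{k+1}$, so the four seminorm values appearing in the statement are well-defined and may each be computed using whichever representative is convenient.

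Granting this observation, I would invoke Proposition \ref{prop:equiv_case_unbdd_regular} twice: once with the ambient open set taken to be $V$, and once with the ambient open set taken to be $\Omega$. This yields
\[
	[I_\omega]_{L^p M^k(V\mid R)}
	= \norm{I_\omega}_{L^p M^k(V\mid R)}
	= K \bigl\lVert \abs{\omega}_{\S,p} \bigr\rVert_{L^p(V)}
\]
and
\[
	[I_\omega]_{L^p M^k(\Omega\mid R)}
	= \norm{I_\omega}_{L^p M^k(\Omega\mid R)}
	= K \bigl\lVert \abs{\omega}_{\S,p} \bigr\rVert_{L^p(\Omega)},
\]
with the same constant $K = K(p,k)$ in both cases.

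To finish, I would note that $\abs{\cdot}_{\S,p}$ is a norm on each fiber $\operatorname{Alt}_k(\R^n)$, so $\abs{\omega_x}_{\S,p} = 0$ whenever $\omega_x = 0$; in particular, since $\spt \omega \subset V$, the function $\abs{\omega}_{\S,p}$ vanishes a.e.\ on $\Omega \setminus V$. Therefore the two $L^p$-norms of $\abs{\omega}_{\S,p}$ appearing on the right-hand sides above coincide, and chaining the equalities gives the claim.

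Because Proposition \ref{prop:equiv_case_unbdd_regular} has already done all the analytic work, there is no substantive obstacle here. The only piece of bookkeeping is verifying the agreement of the two zero-extensions, which is immediate from the support condition on $\omega$.
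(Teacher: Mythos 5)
Your proof is correct, and it is genuinely more economical than the one in the paper. The paper proves the lemma by a direct estimate: it reduces to showing $[I_\omega]_{L^p M^k(V\mid R)} \ge \norm{I_\omega}_{L^p M^k(\Omega\mid R)}$, bounds the discrepancy by the integral over $\B(\Omega, k, R) \setminus \B(V, k, R)$, applies Lemma~\ref{lem:only_close_pts_matter} to restrict attention to tuples whose coordinates lie within $R\eps_\theta$ of $x_0$, and then observes that for $\theta$ close enough to $1$ the ball $\B^n(x_0, R\eps_\theta)$ is disjoint from $\spt\omega$, so the integrand vanishes identically. You instead exploit the fact that Proposition~\ref{prop:equiv_case_unbdd_regular} has already produced an explicit formula $K\lVert\abs{\omega}_{\S,p}\rVert_{L^p(\Omega)}$ with $K = K(p,k)$ independent of the domain; applying it once with ambient set $V$ and once with ambient set $\Omega$, and using the observation that the two zero-extensions of $\omega$ agree a.e.\ (since $\omega = 0$ a.e.\ on $\Omega\setminus V$), the two right-hand sides both reduce to $K\lVert\abs{\omega}_{\S,p}\rVert_{L^p(\spt\omega)}$ and the lemma follows. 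Both proofs are valid and respect the logical ordering of the paper; yours is shorter, avoids re-invoking Lemma~\ref{lem:only_close_pts_matter}, and makes transparent that the lemma is a consequence of the domain-independence of the constant $K$, while the paper's version re-derives the locality directly. The only thing worth stating explicitly, which you do get right, is that $I_{\omega\vert_V}$ and $I_\omega$ agree a.e.\ on $V^{k+1}$ by Corollary~\ref{cor:I_omega_well_def}, so that the seminorm over $V$ in the lemma statement coincides with the one produced by Proposition~\ref{prop:equiv_case_unbdd_regular} applied to the restricted form.
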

\begin{proof}
	Note that the claim is trivial if $V = \R^n$, since then also $\Omega = \R^n$. Let $r = \dist(\spt \omega, \R^n \setminus V)$. Since $V$ is open, we have $r > 0$. It is clear that 
	\[
		\norm{I_\omega}_{L^p M^k(V\mid R)} 
		\le \norm{I_\omega}_{L^p M^k(\Omega\mid R)},
	\]
	so in order to prove the claim, we need to show that
	\[
		[I_\omega]_{L^p M^k(V\mid R)}
		\ge \norm{I_\omega}_{L^p M^k(\Omega\mid R)}.
	\]
	Since $\norm{I_\omega}_{L^p M^k(\Omega\mid R)}$ is obtained as a limit due to Proposition \ref{prop:equiv_case_unbdd_regular}, we obtain the estimate
	\begin{align*}
		&\norm{I_\omega}_{L^p M^k(\Omega\mid R)}^p - [I_\omega]_{L^p M^k(V\mid R)}^p\\
		&\quad= \liminf_{\theta \to 1^{-}} \norm{I_\omega}_{L^p M^k(\Omega\mid R, \theta)}^p - \liminf_{\theta \to 1^{-}} \norm{I_\omega}_{L^p M^k(V\mid R, \theta)}^p\\
		&\quad\le \limsup_{\theta \to 1^{-}} \int_{\B(\Omega, k, R) \setminus \B(V, k, R)} \frac{(1 - \theta)^k \abs{I_\omega(x_0, \dots, x_k)}^p}{\left(\abs{x_1 - x_0} \cdots \abs{x_k - x_0}\right)^{n + \theta p}} \, dx_0 \dots dx_k.
	\end{align*}
	We then apply Lemma \ref{lem:only_close_pts_matter} to conclude that
	\begin{multline}\label{eq:reduction_to_U-set}
		\limsup_{\theta \to 1^{-}} \int_{\B(\Omega, k, R) \setminus \B(V, k, R)} \frac{(1 - \theta)^k \abs{I_\omega(x_0, \dots, x_k)}^p}{\left(\abs{x_1 - x_0} \cdots \abs{x_k - x_0}\right)^{n + \theta p}} \, dx_0 \dots dx_k\\
		= \limsup_{\theta \to 1^{-}} \int_{\B(\Omega, k, R\eps_\theta) \setminus \B(V, k, R)} \frac{(1 - \theta)^k \abs{I_\omega(x_0, \dots, x_k)}^p}{\left(\abs{x_1 - x_0} \cdots \abs{x_k - x_0}\right)^{n + \theta p}} \, dx_0 \dots dx_k.
	\end{multline}
	
	Now, noting that $\lim_{\theta \to 1^{-}} \eps_\theta = 0$, suppose that $\theta$ is close enough to $1$ that $R\eps_\theta < r/2$, and let $(x_0, \dots, x_k) \in \B(\Omega, k, R\eps_\theta) \setminus \B(V, k, R)$. Since $\B(\Omega, k, R\eps_\theta) \cap  V^{k+1} = \B(V, k, R\eps_\theta) \subset \B(V, k, R)$, we must have $(x_0, \dots, x_k) \notin V^{k+1}$. Hence, there exists an index $i$ such that $\dist(x_i, \spt \omega) \ge r$.	Moreover, since $(x_0, \dots, x_k) \in \B(\Omega, k, R\eps_\theta)$, we have $\abs{x_i - x_0} < R\eps_\theta < r/2$, from which it follows that $\dist(x_0, \spt \omega) > r/2 > R\eps_\theta$. 
	
	Thus, $\omega$ is identically zero in the ball $\B^n(x_0, R\eps_\theta)$ which contains all of the points $x_j, j \in \{0, \dots, k\}$. This in turn implies that $I_\omega(x_0, \dots, x_k) = 0$. Thus, the integrands on the right hand side of \eqref{eq:reduction_to_U-set} are identically zero for $\theta$ close enough to 1, which completes the proof of the claimed $\norm{I_\omega}_{L^p M^k(\Omega)}^p - [I_\omega]_{L^p M^k(V)}^p \le 0$.
\end{proof}

We then prove the next case of Theorem \ref{thm:equiv_of_norms_general}.

\begin{prop}\label{prop:equiv_case_unbdd_alt}
	Let $\Omega \subset \R^n$ be open, let $R > 0$, let $c > 0$, and let $\omega \in L^p(\wedge^k T^* \Omega)$, where we extend $\omega$ to $L^p(\wedge^k T^* \R^n)$ by setting $\omega = 0$ outside $\Omega$. Then 
	\[
		[I_\omega]_{L^p M^{k, c}(\Omega\mid R)} 
		=\norm{I_\omega}_{L^p M^{k, c}(\Omega\mid R)} 
		= K \bigl\lVert\abs{\omega}_{\S,p}\bigr\rVert_{L^p(\Omega)},
	\]
	where $K = K(p, k)$ and $\abs{\cdot}_{\S,p}$ are as in Proposition \ref{prop:equiv_case_unbdd_regular}.
\end{prop}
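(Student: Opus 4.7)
The plan is to deduce this result from Proposition \ref{prop:equiv_case_unbdd_regular} by showing that the additional restriction to $\Omega(k,c)$ in the integration domain contributes nothing in the limit $\theta \to 1^{-}$. Since trivially $\norm{\cdot}_{L^p M^{k,c}(\Omega\mid R, \theta)} \le \norm{\cdot}_{L^p M^{k}(\Omega\mid R, \theta)}$, the upper bound
\[
\norm{I_\omega}_{L^p M^{k,c}(\Omega \mid R)} \le K \bigl\lVert \abs{\omega}_{\S,p} \bigr\rVert_{L^p(\Omega)}
\]
is immediate from Proposition \ref{prop:equiv_case_unbdd_regular}, and the content of the proposition is really the matching lower bound on the $\liminf$.

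The first step is to reduce to the case where $\omega \in C_0(\wedge^k T^* \Omega)$ has support compactly contained in $\Omega$. Once the result is known for such $\omega_j$, one can approximate a general $\omega \in L^p(\wedge^k T^* \Omega)$ in $L^p$-norm by such $\omega_j$ and invoke Lemma \ref{lem:limit_switching_lemma} with $\norm{\cdot}_\theta = \norm{\cdot}_{L^p M^{k,c}(\Omega \mid R, \theta)}$; this applies precisely because $\norm{\cdot}_\theta$ is dominated by $\norm{\cdot}_{L^p M^k(\Omega \mid R, \theta)}$, and the existence of $\lim_{\theta\to 1^{-}} \norm{I_{\omega_j}}_\theta$ for each $j$ will be supplied by the compactly supported case. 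Combined with the $L^p$-continuity of $\omega \mapsto \bigl\lVert \abs{\omega}_{\S,p} \bigr\rVert_{L^p(\Omega)}$, which was already recorded in the proof of Proposition \ref{prop:equiv_case_unbdd_regular}, this upgrades the compactly supported case to the general case.

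For the compactly supported case, let $K := \spt \omega$ and $d_0 := \dist(K, \partial \Omega) > 0$. The idea is to show
\[
\lim_{\theta \to 1^{-}}\Bigl( \norm{I_\omega}_{L^p M^k(\Omega \mid R, \theta)}^p - \norm{I_\omega}_{L^p M^{k,c}(\Omega \mid R, \theta)}^p \Bigr) = 0,
\]
so that the desired equality drops out of Proposition \ref{prop:equiv_case_unbdd_regular}. The difference of the squared seminorms is an integral of a nonnegative integrand over $\B(\Omega, k, R) \setminus \Omega(k,c)$, and I would split this domain using the scale $\eps_\theta = e^{-1/\sqrt{1-\theta}}$ from Lemma \ref{lem:only_close_pts_matter} into
\[
\bigl[ \B(\Omega,k,R\eps_\theta) \setminus \Omega(k,c) \bigr] \cup \bigl[ \bigl(\B(\Omega,k,R) \setminus \B(\Omega,k,R\eps_\theta)\bigr) \setminus \Omega(k,c) \bigr].
\]
The integral over the second set is dominated by the integral over $\B(\Omega,k,R) \setminus \B(\Omega,k,R\eps_\theta)$, which vanishes as $\theta \to 1^{-}$ directly by Lemma \ref{lem:only_close_pts_matter}. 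For the first set, I would show the integrand itself vanishes identically once $\theta$ is close enough to $1$. Indeed, if $(x_0,\dots,x_k) \in \B(\Omega,k,R\eps_\theta)$ and $I_\omega(x_0,\dots,x_k) \ne 0$, then $\Delta(x_0,\dots,x_k) \cap K \ne \emptyset$; but the simplex lies in $\overline{\B^n(x_0, R\eps_\theta)}$, so $\dist(x_0, K) < R\eps_\theta$, giving $\dist(x_0,\partial\Omega) > d_0 - R\eps_\theta$. On the other hand, $(x_0,\dots,x_k) \notin \Omega(k,c)$ forces $c\,\dist(x_0,\partial\Omega) \le \max_i \abs{x_0 - x_i} < R\eps_\theta$, i.e.\ $\dist(x_0,\partial\Omega) \le R\eps_\theta / c$. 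These bounds are incompatible as soon as $R\eps_\theta < cd_0/(1+c)$, which holds for all $\theta$ sufficiently close to $1$.

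The main obstacle is the geometric estimate in the last step, which has to correctly juggle three scales — the fixed parameters $R$ and $c$, the shrinking localization scale $R\eps_\theta$, and the positive distance $d_0$ from $\spt \omega$ to $\partial \Omega$. The crucial insight is that for compactly supported $\omega$, nonvanishing of $I_\omega(x_0,\dots,x_k)$ over a tiny simplex forces $x_0$ to lie near $K$ and hence far from $\partial \Omega$, which is incompatible with failing the $\Omega(k,c)$ condition once the scales have separated. Everything else is bookkeeping: splitting the domain using $\eps_\theta$, applying Lemma \ref{lem:only_close_pts_matter} to the outer shell, and using Lemma \ref{lem:limit_switching_lemma} to pass back from compactly supported forms to arbitrary $L^p$-forms.
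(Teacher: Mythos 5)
Your proof is correct and follows essentially the same approach as the paper: reduce to $\omega \in C_0(\wedge^k T^* \Omega)$ via Lemma \ref{lem:limit_switching_lemma}, then exploit the positive distance from $\spt\omega$ to $\partial\Omega$ together with Lemma \ref{lem:only_close_pts_matter} to show the $\Omega(k,c)$-restriction is asymptotically immaterial. The one difference is bookkeeping: the paper first invokes Lemma \ref{lem:compact_support_lemma} to replace $\norm{I_\omega}_{L^p M^k(\Omega\mid R)}$ by $[I_\omega]_{L^p M^k(\Omega_\delta\mid R)}$ and then shows the missing region contributes nothing, whereas you work directly with the difference $\norm{I_\omega}_{L^p M^k(\Omega\mid R,\theta)}^p - \norm{I_\omega}_{L^p M^{k,c}(\Omega\mid R,\theta)}^p$, split the exceptional set $\B(\Omega,k,R)\setminus\Omega(k,c)$ along the scale $R\eps_\theta$, and observe that the near part carries a vanishing integrand (since $I_\omega \ne 0$ at small scales forces $x_0$ near $\spt\omega$, which is incompatible with failing the $\Omega(k,c)$ condition once $R\eps_\theta < cd_0/(1+c)$) while the far part vanishes by Lemma \ref{lem:only_close_pts_matter}. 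This is slightly more self-contained, avoiding Lemma \ref{lem:compact_support_lemma} as an intermediary, but the geometric mechanism is identical to the paper's.
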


\begin{proof}
	By Proposition \ref{prop:equiv_case_unbdd_regular}, it suffices to show that
	\[
		[I_\omega]_{L^p M^{k, c}(\Omega\mid R)} 
		=\norm{I_\omega}_{L^p M^{k, c}(\Omega\mid R)} 
		=\norm{I_\omega}_{L^p M^{k}(\Omega\mid R)}.
	\]
	We first show that we may yet again assume that $\omega \in C_0(\wedge^k T^* \omega)$. Indeed, given a sequence of $\omega_j \in C_0(\wedge^k T^* \Omega)$ that converges to $\omega$ in $L^p(\wedge^k T^* \Omega)$, if the statement of the theorem applies to $\omega_j$, then we can apply Lemma \ref{lem:limit_switching_lemma} twice, once with $\norm{\cdot}_{L^p M^{k, c}(\Omega\mid R)}$ and once with $\norm{\cdot}_{L^p M^{k}(\Omega\mid R)}$, to conclude that
	\begin{multline*}
		\lim_{\theta \to 1^{-}} \smallnorm{I_{\omega}}_{L^p M^{k,c}(\Omega\mid R, \theta)}
		= \lim_{j \to \infty} \lim_{\theta \to 1^{-}} \smallnorm{I_{\omega_j}}_{L^p M^{k,c}(\Omega\mid R, \theta)}\\
		= \lim_{j \to \infty} \lim_{\theta \to 1^{-}} \smallnorm{I_{\omega_j}}_{L^p M^{k}(\Omega\mid R, \theta)}
		= \smallnorm{I_\omega}_{L^p M^{k}(\Omega\mid R)}.
	\end{multline*}
	Thus, we proceed to assume that $\omega \in C_0(\wedge^k T^* \omega)$. Since we trivially have $\norm{I_\omega}_{L^p M^{k,c}(\Omega\mid R)} \le \norm{I_\omega}_{L^p M^{k}(\Omega\mid R)}$, it suffices to show that $[I_\omega]_{L^p M^{k,c}(\Omega\mid R)} \ge \norm{I_\omega}_{L^p M^{k}(\Omega\mid R)}$. 
	
	For every $\delta > 0$, we define
	\[
		\Omega_{\delta} = \{x \in \Omega : \dist (x, \partial \Omega) > \delta\},
	\]
	where we interpret $\Omega_{\delta} = \Omega$ if $\partial \Omega = \emptyset$ (i.e.\ if $\Omega = \R^n$). Then, since $\omega \in C_0(\wedge^k T^* \omega)$, we may fix $\delta > 0$ to be such that $\spt \omega \subset \Omega_{\delta}$. By Lemma \ref{lem:compact_support_lemma}, we thus have
	\[
		\norm{I_{\omega}}_{L^p M^k(\Omega\mid R)} = [I_{\omega}]_{L^p M^k(\Omega_\delta\mid R)}.
	\]
	In particular, the claim follows if $[I_\omega]_{L^p M^{k,c}(\Omega\mid R)} \ge [I_{\omega}]_{L^p M^k(\Omega_\delta\mid R)}$.
	
	For this, note that if $(x_0, \dots, x_k) \in \B(\Omega_\delta, k, R) \setminus (\B(\Omega, k, R) \cap \Omega(k, c)) = \B(\Omega_\delta, k, R) \setminus \Omega(k, c)$, then there exists an index $i \in \{1, \dots, k\}$ such that $\abs{x_i - x_0} \ge c \dist(x_0, \partial \Omega) > c\delta$. Hence, we conclude that if $\theta$ is close enough to 1 that $R\eps_\theta < c\delta$,
	then $\B(\Omega_\delta, k, R) \setminus \Omega(k, c) \subset \B(\Omega, k, R) \setminus B(\Omega, k, R\eps_\theta)$. Thus, by Lemma \ref{lem:only_close_pts_matter}, we have
	\[
		\lim_{\theta \to 1^{-}} \int_{\B(\Omega_\delta, k, R) \setminus \Omega(k, c)} \frac{(1 - \theta)^k \abs{I_\omega(x_0, \dots, x_k)}^p}{\left(\abs{x_1 - x_0} \cdots \abs{x_k - x_0}\right)^{n + \theta p}} \, dx_0 \dots dx_k = 0.
	\]
	Consequently, we have
	\begin{multline*}
		[I_\omega]^p_{L^p M^{k}(\Omega_\delta \mid R)}\\
		= \liminf_{\theta \to 1^{-}} \int_{\B(\Omega_\delta, k, R) \cap \Omega(k, c)} \frac{(1 - \theta)^k \abs{I_\omega(x_0, \dots, x_k)}^p}{\left(\abs{x_1 - x_0} \cdots \abs{x_k - x_0}\right)^{n + \theta p}} \, dx_0 \dots dx_k\\
		\le [I_\omega]^p_{L^p M^{k, c}(\Omega\mid R)},
	\end{multline*}
	completing the proof.
\end{proof}

The last case of Theorem \ref{thm:equiv_of_norms_general} is then an immediate corollary.

\begin{cor}\label{cor:equiv_case_bdd_alt}
	Let $\Omega \subset \R^n$ be open and bounded, let $c > 0$, and let $\omega \in L^p(\wedge^k T^* \Omega)$, where we extend $\omega$ to $L^p(\wedge^k T^* \R^n)$ by setting $\omega = 0$ outside $\Omega$. Then 
	\[
		[I_\omega]_{L^p M^{k, c}(\Omega)} 
		=\norm{I_\omega}_{L^p M^{k, c}(\Omega)} 
		= K \bigl\lVert\abs{\omega}_{\S,p}\bigr\rVert_{L^p(\Omega)},
	\]
	where $K = K(p, k)$ and $\abs{\cdot}_{\S,p}$ are as in Proposition \ref{prop:equiv_case_unbdd_regular}.
\end{cor}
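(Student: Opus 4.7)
The plan is to reduce this corollary to Proposition~\ref{prop:equiv_case_unbdd_alt} in exactly the same fashion as Corollary~\ref{cor:equiv_case_bdd_regular} is deduced from Proposition~\ref{prop:equiv_case_unbdd_regular}. The key input is the identity \eqref{eq:bdd_set_equality_alt}, already recorded in the paper, which states that for every bounded $E \subset \Omega$, every $\theta \in (0,1)$, and every $F \in M^k(\Omega;\R)$,
\[
	\norm{F}_{L^p M^{k,c}(E, \theta)} = \norm{F}_{L^p M^{k,c}(E \mid \diam E, \theta)}.
\]
This identity holds because any $(x_0, \ldots, x_k) \in E^{k+1}$ automatically satisfies $\max_i \abs{x_i - x_0} \le \diam E$, so intersecting $E(k,c)$ with $\B(E, k, \diam E)$ does not shrink the domain of integration in \eqref{eq:Lp_for_multifunct_def_alt_loc} relative to \eqref{eq:Lp_for_multifunct_def_alt}.

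Taking $\limsup$ and $\liminf$ as $\theta \to 1^{-}$ in the above equality and specializing to $E = \Omega$ then yields
\[
	\norm{F}_{L^p M^{k,c}(\Omega)} = \norm{F}_{L^p M^{k,c}(\Omega \mid \diam \Omega)},
	\quad [F]_{L^p M^{k,c}(\Omega)} = [F]_{L^p M^{k,c}(\Omega \mid \diam \Omega)}.
\]
Applying these identities to $F = I_\omega$ and invoking Proposition~\ref{prop:equiv_case_unbdd_alt} with the specific choice $R = \diam \Omega$ produces the chain of equalities
\[
	[I_\omega]_{L^p M^{k,c}(\Omega)} = \norm{I_\omega}_{L^p M^{k,c}(\Omega)} = K \bigl\lVert \abs{\omega}_{\S,p} \bigr\rVert_{L^p(\Omega)},
\]
with $K = K(p,k)$ inherited from Proposition~\ref{prop:equiv_case_unbdd_regular} via Proposition~\ref{prop:equiv_case_unbdd_alt}.

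There is no substantive obstacle here: all the analytic content --- the smooth approximation via $C_0(\wedge^k T^* \Omega)$, the reduction to integration near the diagonal, the polar-coordinate computation converging to $p^{-k} m_k^p(\Delta_k) \abs{\omega_{x_0}}_{\S,p}^p$, and the passage from the unlocalized to the $c$-localized version --- has already been carried out in proving Propositions~\ref{prop:equiv_case_unbdd_regular} and~\ref{prop:equiv_case_unbdd_alt}. The present corollary amounts to noting that in a bounded domain the radius restriction $R$ is vacuous once $R \ge \diam \Omega$, so the bounded case follows by a one-line specialization of the unbounded case.
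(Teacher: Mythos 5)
Your proposal is correct and follows exactly the paper's own argument: invoke the identity \eqref{eq:bdd_set_equality_alt} to see that the bounded seminorm coincides with the $R$-localized one for $R = \diam \Omega$, and then apply Proposition~\ref{prop:equiv_case_unbdd_alt}. Nothing further is needed.
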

\begin{proof}
	The claim is equivalent to Proposition \ref{prop:equiv_case_unbdd_alt} with $R = \diam \Omega$, due to \eqref{eq:bdd_set_equality_alt}.
\end{proof}

By combining Propositions \ref{prop:equiv_case_unbdd_regular} and \ref{prop:equiv_case_unbdd_alt} with Corollaries \ref{cor:equiv_case_bdd_regular} and \ref{cor:equiv_case_bdd_alt}, the proof of Theorem \ref{thm:equiv_of_norms_general} is hence complete.

\section{Weak derivatives}

Let $\Omega$ be an open set in $\R^n$. As stated in the introduction, the \emph{Alexander-Spanier differential} $d \colon M^k(\Omega; \R) \to M^{k+1}(\Omega; \R)$ on measurable $k$-multi\-functions is defined by
\[
	(dF)(x_0, x_1, \dots, x_{k+1}) = \sum_{i = 0}^k (-1)^i F(x_0, x_1, \dots, \hat{x_i}, \dots, x_{k+1})
\]
for $F \in M^k(\Omega; \R)$. It is clear from the definition that $dF \colon \Omega^{k+2} \to \R$ is measurable if $F \colon \Omega^{k+1} \to \R$ is measurable, and that the map $d$ is linear. It is also well known that this map satisfies
\begin{equation}\label{eq:dd_is_zero}
	d(dF) = 0 \qquad \text{for all } F \in M^{k}(\Omega; \R), k \in \Z_{\geq 0}.
\end{equation}

We begin our work towards Theorem \ref{thm:BBM_for_Sobolev_forms_all_cases} by noting the following fact for smooth differential forms.

\begin{lemma}\label{lem:AS_Stokes_smooth}
	Let $\Omega \subset \R^n$ be open, and let $\omega \in C^\infty(\wedge^k T^* \Omega)$. Then for all $(x_0, \dots, x_{k+1}) \in \Omega^{k+2}$ such that $\Delta(x_0, \dots, x_{k+1}) \subset \Omega$, the Alexander-Spanier differential of $I_\omega$ satisfies
	\[
		dI_\omega(x_0, \dots, x_{k+1}) = I_{d\omega}(x_0, \dots, x_{k+1}).
	\]
	In particular, if $\Omega$ is convex, then this holds for all $(x_0, \dots, x_{k+1}) \in \Omega^{k+2}$.
\end{lemma}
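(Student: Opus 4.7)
The plan is to reduce the identity to the classical Stokes theorem on the standard simplex $\Delta_{k+1}$ via the affine parametrization $\phi = \phi_{x_0, \dots, x_{k+1}} : \Delta_{k+1} \to \Delta(x_0, \dots, x_{k+1})$ introduced in Section \ref{subsect:int_of_Lp_forms}. The hypothesis $\Delta(x_0, \dots, x_{k+1}) \subset \Omega$ guarantees that $\phi^*\omega$ and $\phi^*(d\omega)$ are smooth forms on $\Delta_{k+1}$, which justifies the forthcoming manipulations.

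First I would rewrite the right-hand side as $I_{d\omega}(x_0, \dots, x_{k+1}) = \int_{\Delta_{k+1}} \phi^*(d\omega) = \int_{\Delta_{k+1}} d(\phi^*\omega)$, using naturality of the exterior derivative under smooth pullbacks. Next I would apply the classical Stokes theorem on the compact oriented smooth manifold with corners $\Delta_{k+1}$, yielding $\int_{\Delta_{k+1}} d(\phi^*\omega) = \int_{\partial \Delta_{k+1}} \phi^*\omega$, where $\partial \Delta_{k+1}$ carries the induced boundary orientation. At this point I would invoke the standard simplicial boundary formula, which decomposes the boundary as the signed sum $\partial[e_0, \dots, e_{k+1}] = \sum_{i=0}^{k+1} (-1)^i [e_0, \dots, \hat{e_i}, \dots, e_{k+1}]$ of its oriented $k$-faces, where $e_0 = 0$ and $e_1, \dots, e_{k+1}$ are the standard basis vectors of $\R^{k+1}$.

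The step requiring care is checking that each face integral matches one of the terms of $dI_\omega$ with the correct sign. For $i \ge 1$, the face $[e_0, \dots, \hat{e_i}, \dots, e_{k+1}]$ is parametrized from $\Delta_k$ by inserting a zero in the $i$-th slot, and one checks that $\phi \circ \iota_i = \phi_{x_0, \dots, \hat{x_i}, \dots, x_{k+1}}$ directly from the definition of $\phi$. For $i = 0$, the face $[e_1, \dots, e_{k+1}]$ is parametrized by $(s_1, \dots, s_k) \mapsto (1 - \sum s_j, s_1, \dots, s_k)$; composing with $\phi$ and simplifying shows that the result equals $\phi_{x_1, \dots, x_{k+1}}$, which is again the standard parametrization of the opposite face. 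Thus $\int_{[e_0, \dots, \hat{e_i}, \dots, e_{k+1}]} \phi^*\omega = I_\omega(x_0, \dots, \hat{x_i}, \dots, x_{k+1})$ for every $i$.

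Assembling the pieces yields
\[
I_{d\omega}(x_0, \dots, x_{k+1}) = \sum_{i=0}^{k+1} (-1)^i I_\omega(x_0, \dots, \hat{x_i}, \dots, x_{k+1}) = dI_\omega(x_0, \dots, x_{k+1}),
\]
which is the claim. The only genuine obstacle is the sign/orientation bookkeeping for the $i = 0$ face, where the natural parametrization re-bases from $x_0$ to $x_1$; the explicit affine computation above confirms that this re-basing is orientation-preserving so that no extra sign is introduced. The final assertion about convex $\Omega$ is then immediate, since convexity automatically places every simplex $\Delta(x_0, \dots, x_{k+1})$ with corners in $\Omega$ inside $\Omega$.
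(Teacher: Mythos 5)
Your proof is correct and follows the same approach as the paper: apply Stokes' theorem to the simplex and match the boundary faces term-by-term against the Alexander-Spanier differential. The paper states this in one line by integrating directly over $\Delta(x_0,\dots,x_{k+1})$ and its simplicial boundary chain, whereas you pull everything back to the standard simplex $\Delta_{k+1}$ and carry out the face-parametrization and orientation bookkeeping explicitly; the content is the same, just unpacked further.
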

\begin{proof}
	The boundary of a simplex $\Delta(x_0, \dots, x_{k+1})$ is given by the simplicial $k$-chain $\sum_{i=0}^\infty (-1)^i \Delta(x_0, \dots \hat{x_i}, \dots, x_{k+1})$. Thus, we have by Stokes' theorem that
	\begin{multline*}
		I_{d\omega}(x_0, \dots, x_{k+1}) = \int_{\Delta(x_0, \dots, x_{k+1})} d\omega
		= \int_{\partial \Delta(x_0, \dots, x_{k+1})} \omega\\
		= \sum_{i=0}^\infty (-1)^i \int_{\Delta(x_0, \dots \hat{x_i}, \dots, x_{k+1})} \omega = dI_\omega(x_0, \dots, x_{k+1}).
	\end{multline*}
\end{proof}

A smooth approximation argument then yields the following corollary; see also e.g.\ \cite[Corollary 7.1]{Kangasniemi-Prywes_SurfaceModulus}.

\begin{lemma}\label{lem:AS_Stokes}
	Let $\Omega \subset \R^n$ be open, and let $\omega \in W^{d, p}(\wedge^k T^* \Omega)$. Then for a.e.\ $(x_0, \dots, x_{k+1}) \in \Omega^{k+2}$ such that $\Delta(x_0, \dots, x_{k+1}) \subset \Omega$, the Alexander-Spanier differential of $I_\omega$ satisfies
	\[
		dI_\omega(x_0, \dots, x_{k+1}) = I_{d\omega}(x_0, \dots, x_{k+1}).
	\]
	In particular, if $\Omega$ is convex, then this holds for a.e.\ $(x_0, \dots, x_{k+1}) \in \Omega^{k+2}$.
\end{lemma}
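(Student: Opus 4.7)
The plan is to prove the identity by mollification on convex open subsets of $\Omega$, using Lemma \ref{lem:AS_Stokes_smooth} for the smooth approximants and the $L^p$-continuity of $\omega \mapsto I_\omega$ established in Lemma \ref{lem:dom_conv_enabler} to pass to the limit.

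First I would reduce to the convex case by an open cover argument. The set
\[
    S := \{(x_0, \dots, x_{k+1}) \in \Omega^{k+2} : \Delta(x_0, \dots, x_{k+1}) \subset \Omega\}
\]
is open in $\Omega^{k+2}$, since each simplex $\Delta(x_0, \dots, x_{k+1})$ is compact in the open set $\Omega$. For each $p = (x_0, \dots, x_{k+1}) \in S$, setting $\eta_p = \tfrac{1}{2}\dist(\Delta(x_0, \dots, x_{k+1}), \partial \Omega) > 0$, the Minkowski sum $V_p := \Delta(x_0, \dots, x_{k+1}) + \B^n(0, \eta_p)$ is a convex, open subset of $\Omega$ with $\overline{V_p} \subset \Omega$ and $p \in V_p^{k+2}$. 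By convexity of $V_p$ one also has $V_p^{k+2} \subset S$, so $\{V_p^{k+2}\}_{p \in S}$ is an open cover of $S$, from which Lindel\"ofness gives a countable subcover $S \subset \bigcup_j V_j^{k+2}$ with each $V_j$ convex and compactly contained in $\Omega$. It therefore suffices to prove the identity almost everywhere on each $V_j^{k+2}$.

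Fix such a convex open $V \Subset \Omega$, let $\phi_\eps$ be a standard mollifier, and set $\omega_\eps = \omega * \phi_\eps$, which for $\eps < \dist(V, \partial \Omega)$ is smooth on a neighborhood of $\overline{V}$. Standard properties of weak exterior derivatives give $d\omega_\eps = (d\omega) * \phi_\eps$ on $V$, together with $\omega_\eps \to \omega$ and $d\omega_\eps \to d\omega$ in $L^p$ on $V$ as $\eps \to 0$. By Lemma \ref{lem:AS_Stokes_smooth} applied on the convex set $V$,
\[
    dI_{\omega_\eps}(x_0, \dots, x_{k+1}) = I_{d\omega_\eps}(x_0, \dots, x_{k+1})
\]
for every $(x_0, \dots, x_{k+1}) \in V^{k+2}$. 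Applying Lemma \ref{lem:dom_conv_enabler} on $V$ with $R = \diam V$ and a fixed $\theta \in (0,1)$ to the differences gives
\[
    \norm{I_{\omega - \omega_\eps}}_{L^p M^k(V\mid R, \theta)} \lesssim_{n,p,k} \norm{\omega - \omega_\eps}_{L^p(V)} \to 0,
\]
and the analogous bound for $d\omega - d\omega_\eps$; here I would use that since $V$ is convex, every simplex with corners in $V$ lies in $V$, so the zero extension outside $V$ does not change $I_\omega$ or $I_{d\omega}$ on this domain.

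Each of these bounds is a weighted $L^p$-norm on $V^{k+1}$ respectively $V^{k+2}$ with a strictly positive weight, so convergence in this norm gives convergence in measure, and along a common subsequence $\eps_j \to 0$ yields a.e.\ pointwise convergence $I_{\omega_{\eps_j}} \to I_\omega$ on $V^{k+1}$ and $I_{d\omega_{\eps_j}} \to I_{d\omega}$ on $V^{k+2}$. A Fubini argument then ensures that for a.e.\ $(x_0, \dots, x_{k+1}) \in V^{k+2}$, each of the $k+2$ faces $(x_0, \dots, \hat{x_i}, \dots, x_{k+1})$ is simultaneously a point of a.e.\ convergence of $I_{\omega_{\eps_j}}$, and hence $dI_{\omega_{\eps_j}}(x_0,\dots,x_{k+1}) \to dI_\omega(x_0,\dots,x_{k+1})$ a.e.\ on $V^{k+2}$. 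Passing to the limit in the smooth identity closes the argument on $V^{k+2}$, and the global claim follows from the countable cover. The main subtlety is precisely this last step: coordinating the a.e.\ pointwise convergence simultaneously for $\omega$, $d\omega$, and all $k+2$ faces of the simplex, which is handled by extracting a common subsequence and invoking Fubini on the positive-weight $L^p$ estimate supplied by Lemma \ref{lem:dom_conv_enabler}.
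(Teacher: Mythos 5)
Your proof is correct, and arrives at the same conclusion as the paper's, but via a different technical route for establishing the a.e.\ pointwise convergence of the smooth approximants. The paper uses density of $C^\infty(\wedge^k T^* \Omega)$ in $W^{d,p}(\wedge^k T^* \Omega)$ together with the Fuglede lemma (\cite[Theorem 3 (f)]{Fuglede_surface-modulus}) and Lemma \ref{lem:modulus_to_measure}: the Fuglede lemma yields convergence of simplex integrals outside a $p$-exceptional family, and Lemma \ref{lem:modulus_to_measure} (applied with $\R^n$ via zero extension) converts that $p$-exceptional family into a Lebesgue null set of corner tuples. Your argument instead reduces to a convex $V \Subset \Omega$ via a Lindel\"of cover, mollifies locally, and leverages the quantitative $L^p$-seminorm bound of Lemma \ref{lem:dom_conv_enabler} together with the observation that the kernel $(\abs{x_1-x_0}\cdots\abs{x_k-x_0})^{-(n+\theta p)}$ is bounded below on $V^{k+1}$ by $(\diam V)^{-k(n+\theta p)}$, so that weighted $L^p$-convergence upgrades to plain $L^p(V^{k+1})$-convergence and hence convergence in measure, giving a.e.\ convergence along a subsequence. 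Both proofs then handle the ``simultaneous faces'' issue identically (your Fubini step mirrors the paper's observation that $P'$ is a finite union of coordinate-permuted copies of $\Omega \times P$) and pass to the limit in the smooth Stokes identity of Lemma \ref{lem:AS_Stokes_smooth}. The trade-off is that your approach avoids invoking the modulus machinery and the global density theorem, at the cost of the cover-and-patch step; the paper's is more self-contained within its existing modulus toolkit and works directly on $\Omega$ in one pass.
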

\begin{proof}
	The set $C^\infty(\wedge^k T^* \Omega)$ is dense in $W^{d,p}(\wedge^k T^* \Omega)$. Thus, we may select a sequence of smooth forms $\omega_j \in C^\infty(\wedge^k T^* \Omega)$ such that $\norm{\omega - \omega_j}_{L^p(\Omega)} \to 0$ and $\norm{d\omega - d\omega_j}_{L^p(\Omega)} \to 0$ as $j \to \infty$. It follows from the so-called Fuglede lemma, see \cite[Theorem 3 (f)]{Fuglede_surface-modulus} and also e.g.\ \cite[Lemma 4.1]{Kangasniemi-Prywes_SurfaceModulus}, that by replacing $\omega_j$ with a subsequence, we may assume that 
	\[
		\int_{\Delta(x_0, \dots, x_k)} \omega_j \xrightarrow[n \to \infty]{} \int_{\Delta(x_0, \dots, x_k)} \omega
	\]
	outside a $p$-exceptional family of simplices $\Delta(x_0, \dots, x_k)$. The corresponding family $P \subset \Omega^{k+1}$ of exceptional $(k+1)$-tuples of points is thus a Lebesgue null-set by Lemma \ref{lem:modulus_to_measure}. 
	
	We then observe that the family $P' \subset \Omega^{k+2}$ corresponding to simplices with a face in $P$ is also a Lebesgue null-set: indeed, this follows since $P'$ is a finite union of coordinate-permuted versions of $\Omega \times P$. We conclude that
	\[
		dI_{\omega_j}(x_0, \dots, x_{k+1}) \xrightarrow[n \to \infty]{} dI_{\omega}(x_0, \dots x_{k+1})
	\]
	for all $(x_0, \dots, x_{k+1})$ outside $P'$. 
	
	Finally, a second use of the Fuglede lemma for $d\omega_j$ along with another use of Lemma \ref{lem:modulus_to_measure} shows that, after replacing $\omega_j$ with a further subsequence, we have
	\[
		I_{d\omega_j}(x_0, \dots, x_{k+1}) \xrightarrow[n \to \infty]{} I_{d\omega}(x_0, \dots x_{k+1})
	\]
	outside a Lebesgue null-set $Q \subset \Omega^{k+2}$. The claim follows since for all $(x_0, \dots, x_{k+1}) \in \Omega^{k+2}$ such that $\Delta(x_0, \dots, x_{k+1}) \subset \Omega$, we have $I_{d\omega_j} = dI_{\omega_n}$ for all $j$ by Lemma \ref{lem:AS_Stokes_smooth}.
\end{proof}

We also need the following $L^p$-estimate for $dI_\omega$ when $\omega$ is a convolution. The proof is along the same lines as most standard convolution estimates for $L^p$-norms.
\begin{lemma}\label{lem:convolution_estimate}
	 Let $\omega \in L^p(\wedge^k T^* \R^n)$, let $R > 0$, and let $\eta \in C^\infty_0(\R^n, [0, \infty))$ be a mollifying kernel, where $\norm{\eta}_{L^1(\R^n)} = 1$ and $\spt \eta \subset \B^n(0, \eps)$ for a given $\eps > 0$. Consider the convolved differential form $\eta \ast \omega \in C^\infty(\wedge^k T^* \R^n)$ defined by
	\[
		(\eta \ast \omega)_x = \int_{\R^n} \eta(y) \omega_{x-y} \, dy = \int_{\R^n} \eta(x-y) \omega_{y} \, dy.
	\]
	Then for all measurable sets $E_1, E_2 \subset \R^n$ with $\B^n(E_1, \eps) \subset E_2$, and for every $\theta \in (0, 1)$, we have
	\[
		\norm{dI_{\eta \ast \omega}}_{L^p M^{k+1}(E_1\mid R, \theta)} 
		\leq \norm{dI_{\omega}}_{L^p M^{k+1}(E_2\mid R, \theta)}.
	\]
\end{lemma}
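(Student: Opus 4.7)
The proof plan rests on the observation that the simplicial integration function commutes with convolution in the natural way. Concretely, the first step is to verify the pointwise identity
\[
    I_{\eta \ast \omega}(x_0, \dots, x_k) = \int_{\R^n} \eta(y)\, I_\omega(x_0 - y, \dots, x_k - y)\, dy
\]
for all $(x_0,\dots,x_k) \in (\R^n)^{k+1}$. This follows by unwinding the pullback definition of the simplicial integral: since the affine map $\phi_{x_0,\dots,x_k}$ satisfies $\phi_{x_0,\dots,x_k}(s) - y = \phi_{x_0-y,\dots,x_k-y}(s)$ and has linear part independent of the basepoints, translating the vertices of the simplex by $-y$ exactly compensates for evaluating $\omega$ at $\phi(s)-y$. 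A routine Fubini argument then pushes the $\eta(y)\,dy$-integral outside. Applying the Alexander--Spanier differential $d$ (which is linear in each variable and passes under the $y$-integral) yields the analogous identity for $dI_{\eta \ast \omega}$.

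The second step is to apply Jensen's inequality with respect to the probability measure $\eta(y)\,dy$, giving
\[
    \abs{dI_{\eta \ast \omega}(x_0,\dots,x_{k+1})}^p
    \le \int_{\R^n} \eta(y) \abs{dI_\omega(x_0 - y, \dots, x_{k+1} - y)}^p\, dy.
\]
Multiply both sides by the usual kernel $(1-\theta)^{k+1} (\abs{x_1-x_0}\cdots\abs{x_{k+1}-x_0})^{-n-\theta p}$ and integrate over $\B(E_1, k+1, R)$. By Fubini--Tonelli the $y$-integral may be taken outside, and inside, for each fixed $y$, one performs the translation change of variables $x_i' = x_i - y$. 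Two features conspire to make this step work: the kernel depends only on the differences $\abs{x_i - x_0}$ and is therefore translation-invariant, and the region $\B(E_1, k+1, R)$ is translated to $\B(E_1 - y, k+1, R)$, which is contained in $\B(E_2, k+1, R)$ as soon as $\abs{y} < \eps$, by the hypothesis $\B^n(E_1, \eps) \subset E_2$.

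The third and final step is to note that after the change of variables the inner integral is bounded by $\norm{dI_\omega}_{L^p M^{k+1}(E_2\mid R, \theta)}^p$ for every $y \in \spt \eta$, so pulling this constant out and using $\int_{\R^n} \eta = 1$ yields
\[
    \norm{dI_{\eta \ast \omega}}_{L^p M^{k+1}(E_1 \mid R, \theta)}^p
    \le \norm{dI_\omega}_{L^p M^{k+1}(E_2 \mid R, \theta)}^p,
\]
and taking $p$-th roots gives the claim.

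There is no real obstacle here beyond bookkeeping; the only point requiring minor care is the verification of the commutation identity in the first step, as it requires tracking how translations interact with the affine parametrization $\phi_{x_0,\dots,x_k}$ and with the pullback of $\omega$. Once that identity is in hand, the remainder is Jensen's inequality plus translation invariance of the seminorm's kernel.
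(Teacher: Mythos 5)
Your proof is correct and follows essentially the same route as the paper's: establish the commutation identity $dI_{\eta\ast\omega}(x_0,\dots,x_{k+1}) = \int \eta(y)\,dI_\omega(x_0-y,\dots,x_{k+1}-y)\,dy$, pass the $p$-th power inside the $y$-integral, and finish by Fubini together with the translation invariance of the kernel and the containment $\B(E_1-y,k+1,R)\subset\B(E_2,k+1,R)$ for $y\in\spt\eta$. The only cosmetic difference is that you invoke Jensen's inequality where the paper splits $\eta=\eta^{(p-1)/p}\eta^{1/p}$ and applies H\"older; for a probability density these are the same estimate.
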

\begin{proof}
	We may assume that $\omega$ is Borel, as changing $\omega$ in a null-set does not effect the convolution. Observe that for all $(x_0, \dots, x_k) \in (\R^n)^{k+1}$, we have
	\begin{multline*}
		\int_{\Delta_k} \int_{\R^n} \abs{ \eta(y)
		\omega_{x_0 + s_1(x_1 - x_0) + \dots + s_k(x_k - x_0) - y} (x_1 - x_0, \dots, x_k - x_0)}  \, dy ds\\
		\le m_k(\Delta_k) \abs{x_1 - x_0} \cdots \abs{x_k - x_0} \norm{\eta}_{L^\infty(\R^n)} \norm{\omega}_{L^p(\R^n)}^p < \infty.
	\end{multline*} 
	This allows us to use Fubini's theorem, which yields for a.e.\ $(x_0, \dots, x_{k}) \in (\R^n)^{k+1}$ that
	\begin{multline*}
		I_{\eta \ast \omega}(x_0, \dots, x_{k+1})
		= \int_{\Delta(x_0, \dots, x_{k+1})}
			\left( \int_{\R^n} \eta(y) \omega_{\cdot-y} \, dy \right)\\
		= \int_{\Delta_k} \int_{\R^n} \eta(y)
			\omega_{x_0 + s_1(x_1 - x_0) + \dots + s_k(x_k - x_0) - y} (x_1 - x_0, \dots, x_k - x_0)  \, dy ds\\
		= \int_{\R^n} \eta(y) I_\omega(x_0 - y, \dots, x_k - y) \, dy.
	\end{multline*}
	Thus, by the definition of the Alexander-Spanier differential, we have for a.e.\ $(x_0, \dots, x_{k+1}) \in (\R^n)^{k+2}$ that
	\[
		dI_{\eta \ast \omega}(x_0, \dots, x_{k+1})
		= \int_{\R^n} \eta(y) dI_\omega(x_0 - y, \dots, x_{k+1} - y) \, dy.
	\]
	Now, by using H\"older's inequality, we estimate
	\begin{multline*}
		\abs{dI_{\eta \ast \omega}(x_0, \dots, x_{k+1})}^p\\
		= \abs{\int_{\R^n} \eta^{\frac{p-1}{p}}(y) \cdot \eta^{\frac{1}{p}}(y) dI_\omega(x_0 - y, \dots, x_{k+1} - y) \, dy}^p\\
		\leq \norm{\eta}_{L^1(\R^n)}^{p-1} \int_{\R^n} \eta(y) \abs{dI_\omega(x_0 - y, \dots, x_{k+1} - y)}^p \, dy
	\end{multline*}
	for a.e.\ $(x_0, \dots, x_{k+1}) \in (\R^n)^{k+2}$. Since $\norm{\eta}_{L^1(\R^n)} = 1$, we can thus use Fubini-Tonelli to conclude that
	\begin{align*}
		&\norm{dI_{\eta \ast \omega}}_{L^p M^{k+1}(E_1\mid R, \theta)}^p\\
		&\quad\leq \int_{\B(E, k+1, R)} \int_{\R^n} \eta(y) (1-\theta)^{k+1} \\
		&\quad\hspace{3cm} \frac{\abs{dI_\omega(x_0 - y, \dots, x_{k+1} - y)}^p}{(\abs{x_1-x_0}\cdots\abs{x_{k+1} - x_0})^{n+\theta p}} \, dy dx_0 \dots dx_{k+1}\\
		&\quad=\int_{\R^n} \eta(y) \norm{d I_\omega}_{L^p M^{k+1}((E_1 - y)\mid R, \theta)}^p \, dy\\
		&\quad\leq \norm{\eta}_{L^1(\R^n)} \norm{d I_\omega}_{L^p M^{k+1}(E_2\mid R, \theta)}^p,
	\end{align*}
	completing the proof.
\end{proof}

We are then ready to prove the first case of Theorem \ref{thm:BBM_for_Sobolev_forms_all_cases}, namely case \eqref{enum:BBM_case_conv}.

\begin{prop}\label{prop:BBM_case_conv}
	Let $\Omega \subset \R^n$ be open and convex, $R > 0$, $p, \in (1, \infty)$, $k \in \{0, \dots, n-1\}$, and let $\omega \in L^p(\wedge^k T^* \Omega)$, where we extend $\omega$ to $L^p(\wedge^k T^* \R^n)$ by setting $\omega = 0$ outside $\Omega$. 
	Then the following conditions are equivalent:
	\begin{enumerate}[label=(\roman*)]
		\item $\omega \in W^{d,p}(\wedge^{k} T^*\Omega)$; \label{enum:BBM_conv_sob}
		\item $\norm{dI_\omega}_{L^p M^{k+1}(\Omega \mid R)} < \infty$; \label{enum:BBM_conv_sup}
		\item $[dI_\omega]_{L^p M^{k+1}(\Omega \mid R)} < \infty$. \label{enum:BBM_conv_inf}
	\end{enumerate}
	Moreover, if $\omega \in W^{d,p}(\wedge^{k} T^*\Omega)$, then 
	\[
		[dI_\omega]_{L^p M^{k+1}(\Omega \mid R)} 
		= \norm{dI_\omega}_{L^p M^{k+1}(\Omega \mid R)} 
		= K \bigl\lVert\abs{d\omega}_{\S,p}\bigr\rVert_{L^p(\Omega)},
	\]
	where $K = K(p, n, k)$ and $\abs{\cdot}_{\S,p}$ are as in Theorem \ref{thm:equiv_of_norms_general}.
\end{prop}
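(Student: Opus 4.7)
The plan is to dispose of the easy directions via Lemma \ref{lem:AS_Stokes} and Proposition \ref{prop:equiv_case_unbdd_regular}, then establish the essential direction (iii) $\Rightarrow$ (i) by mollification plus weak compactness. If $\omega \in W^{d,p}(\wedge^k T^* \Omega)$, convexity of $\Omega$ together with Lemma \ref{lem:AS_Stokes} yields $dI_\omega = I_{d\omega}$ a.e.\ on $\Omega^{k+2}$, whereupon Proposition \ref{prop:equiv_case_unbdd_regular} applied to $d\omega \in L^p(\wedge^{k+1} T^* \Omega)$ gives
\[
    [dI_\omega]_{L^p M^{k+1}(\Omega \mid R)} = \norm{dI_\omega}_{L^p M^{k+1}(\Omega \mid R)} = K \smallnorm{\abs{d\omega}_{\S,p}}_{L^p(\Omega)} < \infty,
\]
handling (i) $\Rightarrow$ (ii), (iii) together with the displayed formula. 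The implication (ii) $\Rightarrow$ (iii) is immediate from $\liminf \le \limsup$.

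For (iii) $\Rightarrow$ (i), I will mollify. Pick a standard mollifier $\eta_\eps \in C^\infty_0(\B^n(0,\eps))$ with $\norm{\eta_\eps}_{L^1(\R^n)} = 1$, set $\omega_\eps = \eta_\eps \ast \omega$, and let $\Omega_\eps = \{x \in \Omega : \dist(x,\partial\Omega) > \eps\}$, which is convex by Lemma \ref{lem:convex_approx} and satisfies $\B^n(\Omega_\eps, \eps) \subset \Omega$. Lemma \ref{lem:convolution_estimate} then produces
\[
    \norm{dI_{\omega_\eps}}_{L^p M^{k+1}(\Omega_\eps \mid R, \theta)} \le \norm{dI_\omega}_{L^p M^{k+1}(\Omega \mid R, \theta)}
\]
for every $\theta \in (0,1)$, and since $\omega_\eps$ is smooth and $\Omega_\eps$ is convex, Lemma \ref{lem:AS_Stokes_smooth} replaces the left-hand side with $\norm{I_{d\omega_\eps}}_{L^p M^{k+1}(\Omega_\eps \mid R, \theta)}$. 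Passing to the $\liminf$ in $\theta$ and using that Proposition \ref{prop:equiv_case_unbdd_regular} applied to $d\omega_\eps$ produces an honest limit, I obtain
\[
    K \smallnorm{\abs{d\omega_\eps}_{\S,p}}_{L^p(\Omega_\eps)} = \norm{I_{d\omega_\eps}}_{L^p M^{k+1}(\Omega_\eps \mid R)} \le [dI_\omega]_{L^p M^{k+1}(\Omega \mid R)}.
\]
By equivalence of $\abs{\cdot}_{\S,p}$ with the Euclidean norm on $\operatorname{Alt}_{k+1}(\R^n)$, this gives a bound $\norm{d\omega_\eps}_{L^p(\Omega_\eps)} \le C\,[dI_\omega]_{L^p M^{k+1}(\Omega \mid R)}$ uniform in $\eps$.

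Since $p > 1$, Banach--Alaoglu together with a diagonal exhaustion of $\Omega$ by compacts then extracts a sequence $\eps_j \to 0$ and a form $\alpha \in L^p(\wedge^{k+1} T^* \Omega)$ with $d\omega_{\eps_j} \rightharpoonup \alpha$ weakly in $L^p_\loc(\wedge^{k+1} T^* \Omega)$, and $\norm{\alpha}_{L^p(\Omega)}$ is controlled by the same constant times $[dI_\omega]_{L^p M^{k+1}(\Omega \mid R)}$ by lower semicontinuity. For any test form $\eta \in C^\infty_0(\wedge^{n-k-1} T^* \Omega)$, the smooth integration-by-parts identity $\int_\Omega d\omega_{\eps_j} \wedge \eta = (-1)^{k+1} \int_\Omega \omega_{\eps_j} \wedge d\eta$ passes to the limit via weak convergence on the left and the $L^p_\loc$-convergence $\omega_{\eps_j} \to \omega$ on the right, identifying $\alpha$ as the weak exterior derivative $d\omega$ and yielding (i). The main obstacle is precisely this (iii) $\Rightarrow$ (i) step: converting finiteness of a non-local $\liminf$ into a uniform classical $L^p$-bound on $d\omega_\eps$, which is achieved by the chain Lemma \ref{lem:convolution_estimate} $\to$ Lemma \ref{lem:AS_Stokes_smooth} $\to$ Proposition \ref{prop:equiv_case_unbdd_regular} on the convex subdomain $\Omega_\eps$, followed by a standard weak-compactness argument.
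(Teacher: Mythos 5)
Your proposal is correct and follows the same overall architecture as the paper: dispose of (ii)$\Rightarrow$(iii) trivially, obtain (i)$\Rightarrow$(ii)$+$formula from Lemma~\ref{lem:AS_Stokes} and Proposition~\ref{prop:equiv_case_unbdd_regular}, and establish (iii)$\Rightarrow$(i) by mollifying, chaining Lemma~\ref{lem:convolution_estimate}$\to$Lemma~\ref{lem:AS_Stokes_smooth}$\to$Proposition~\ref{prop:equiv_case_unbdd_regular} on the convex inner domain $\Omega_\eps$ to get a uniform $L^p$-bound on $d\omega_\eps$, and then invoking weak compactness. The one step you leave implicit, which the paper flags explicitly, is the identification $\smallnorm{I_{d\omega_\eps}}_{L^p M^{k+1}(\Omega_\eps\mid R)} = \smallnorm{I_{(d\omega_\eps)\vert_{\Omega_\eps}}}_{L^p M^{k+1}(\Omega_\eps\mid R)}$ needed to legitimately apply Proposition~\ref{prop:equiv_case_unbdd_regular} (which is stated for zero-extended forms) to $d\omega_\eps$; this holds precisely because $\Omega_\eps$ is convex, so simplices with corners in $\Omega_\eps$ never see values of $d\omega_\eps$ outside $\Omega_\eps$, and you do have that convexity available.

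Your endgame differs slightly but validly from the paper's. The paper first fixes $\eps$, runs Banach--Alaoglu in $L^p(\wedge^{k+1}T^*\Omega_\eps)$ to conclude $\omega \in W^{d,p}(\wedge^k T^* \Omega_\eps)$, then re-invokes the already-proved (i)$\Rightarrow$formula direction on $\Omega_\eps$ (with another appeal to convexity to avoid zero-extension issues) to bound $\norm{d\omega}_{L^p(\Omega_\eps)}$, and finally lets $\eps \to 0$ by monotone convergence. You instead diagonalize over a compact exhaustion, extract a single weak limit $\alpha \in L^p(\wedge^{k+1}T^*\Omega)$, control its norm directly by weak lower semicontinuity, and identify it as $d\omega$ in one pass. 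Your route is marginally more economical since it avoids re-running the forward implication and the associated AS-Stokes argument a second time; the paper's route keeps each step confined to a single fixed convex $\Omega_\eps$, which makes the bookkeeping of the seminorm domains slightly more explicit. Both are sound.
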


\begin{proof}
	It is clear that \ref{enum:BBM_conv_sup} implies \ref{enum:BBM_conv_inf}. We then prove that \ref{enum:BBM_conv_sob} implies \ref{enum:BBM_conv_sup}, along with the additional claims at the end of the theorem. For this, suppose that $\omega \in W^{d,p}(\wedge^k T^*\Omega)$. Then by Lemma \ref{lem:AS_Stokes}, we have $dI_\omega = I_{d\omega}$ a.e.\ on $\Omega^{k+2}$, and by Proposition \ref{prop:equiv_case_unbdd_regular}, we have
	\[
		[I_{d\omega}]_{L^p M^k(\Omega\mid R)}
		= \norm{I_{d\omega}}_{L^p M^k(\Omega\mid R)} 
		= K(n, p, k) \bigl\lVert\abs{d\omega}_{\S,p}\bigr\rVert_{L^p(\Omega)},
	\]
	thus proving the claim in this case.
	
	It remains to prove that \ref{enum:BBM_conv_inf} implies \ref{enum:BBM_conv_sob}. Thus, let $\omega \in L^p(\wedge^k T^*\Omega)$ with $[dI_{\omega}]_{L^p M^{k+1}(\Omega)} < \infty$, with the objective to show that $\omega \in W^{d,p}(\wedge^k T^* \Omega)$. For this, let $\eps > 0$, and let $\Omega_\eps = \{x \in \Omega : \dist (x, \partial \Omega) > \eps\} $. By Lemma \ref{lem:convex_approx}, $\Omega_\eps$ is a convex subdomain of $\Omega$ with $\B^n(\Omega_\eps, \eps) \subset \Omega$. 
	
	We zero extend $\omega$ outside $\Omega$ so that $\omega \in L^p(\wedge^k T^* \R^n)$, and select smooth approximations $\omega_j = \eta_j \ast \omega$ of $\omega$, where the sequence of mollifying kernels $\eta_j \in C^\infty_0(\R^n, [0, \infty])$ is chosen so that $\spt \eta_j \subset \B^n(0, \eps)$ for all $j \in \Z_{> 0}$. In particular, by component-wise application of standard properties of convolution approximations, $\omega_j$ are smooth $k$-forms with $\lim_{j \to \infty} \norm{\omega_j - \omega}_{L^p(\R^n)} = 0$.
	
	We fix an index $j$. By Proposition \ref{prop:equiv_case_unbdd_regular}, we have that $\norm{d\omega_j}_{L^p(\Omega_\eps)} \lesssim_{n,p,k} [I_{d\omega_j\vert_{\Omega_\eps}}]_{L^p M^{k+1}(\Omega_\eps\mid R)}$. However, since $\Omega_\eps$ is convex, the values of $I_{d\omega_j}$ in $\Omega_\eps$ do not depend on how $d\omega_j$ is defined outside $\Omega_\eps$, and thus we in fact have $[I_{d\omega_j\vert_{\Omega_\eps}}]_{L^p M^{k+1}(\Omega_\eps\mid R)} = [I_{d\omega_j}]_{L^p M^{k+1}(\Omega_\eps\mid R)}$.
	Moreover, since $\omega_j$ are smooth, we have by Lemma \ref{lem:AS_Stokes_smooth} that $dI_{\omega_j} = I_{d\omega_j}$ in all of $\R^n$. Lastly, by Lemma \ref{lem:convolution_estimate} and the condition $\spt \eta_j \subset \B^n(0, \eps)$, we know that $[dI_{\omega_j}]_{L^p M^{k+1}(\Omega_\eps\mid R)} \le [dI_{\omega}]_{L^p M^{k+1}(\Omega\mid R)}$. By chaining all of the above deductions together, we hence have the uniform upper bound
	\[
		\norm{d\omega_j}_{L^p(\Omega_\eps)} 
		\le [dI_{\omega}]_{L^p M^{k+1}(\Omega\mid R)} < \infty
	\]
	for all indices $j$.
	
	Since $p > 1$, $L^p(\wedge^{k+1} T^* \Omega_\eps)$ is reflexive, and thus balls in it are weakly compact by the by the Banach-Alaoglu theorem. Thus, by passing to a subsequence of $\omega_j$, we may assume that there exists a $\tau \in L^p(\wedge^{k+1} T^* \Omega_\eps)$ such that $d\omega_j \rightharpoonup \tau$ weakly in $L^p(\wedge^{k+1} T^* \Omega_\eps)$. However, now if $\eta \in C^\infty_0(\wedge^{n-k-1} T^* \Omega_\eps)$ is a smooth test form, we can use the smoothness of $\omega_j$ to conclude that
	\begin{align*}
		&\abs{\int_{\Omega_\eps} \left(\tau \wedge \eta - (-1)^{k+1} \omega \wedge d\eta \right)}\\
		&\qquad = \abs{\int_{\Omega_\eps} \left((\tau - d\omega_j) \wedge \eta - (-1)^{k+1} (\omega - \omega_j) \wedge d\eta \right)}\\
		&\qquad \le \abs{\int_{\Omega_\eps} d\omega_j \wedge \eta - \int_{\Omega_\eps} \tau \wedge \eta}
		+ \norm{d\eta}_{L^{p/(p-1)}(\R^n)} \norm{\omega - \omega_j}_{L^{p}(\R^n)}.
	\end{align*}
	Since $d\omega_j \rightharpoonup \tau$ weakly in $L^p(\wedge^{k+1} T^* \Omega_\eps)$ and $\omega_j \to \omega$ strongly in $L^p(\wedge^k T^*\R^n)$, the right hand side of the above estimate tends to zero as $j \to \infty$. Thus,  $\tau$ is a weak exterior derivative of $\omega$ in $\Omega_\eps$, and hence $\omega \in W^{d,p}(\wedge^{k} T^* \Omega_\eps)$. 
	
	We then apply the case that was proven at the beginning of the proof to obtain
	\begin{equation}\label{eq:eps_upper_bound}
		\norm{d\omega}_{L^p(\Omega_\eps)} \lesssim_{n,k,p} [dI_{\omega}]_{L^p M^k(\Omega_\eps\mid R)}
		\leq [dI_{\omega}]_{L^p M^k(\Omega\mid R)} < \infty,
	\end{equation}
	where we again leverage the convexity of $\Omega_\eps$ to avoid having to switch to a zero extension of $\omega$ in the first estimate. Finally we note that $\Omega = \bigcup_{\eps > 0} \Omega_\eps$ by openness of $\Omega$. Thus, $\omega \in W^{d,p}_\loc(\wedge^k T^*\Omega)$, and by applying monotone convergence on \eqref{eq:eps_upper_bound}, we conclude that $\norm{d\omega}_{L^p(\Omega)} \lesssim_{n,k,p} [dI_{\omega}]_{L^p M^k(\Omega\mid R)} < \infty$, completing the proof.
\end{proof}

As before, due to \eqref{eq:bdd_set_equality}, case \eqref{enum:BBM_case_bdd_conv} is an immediate corollary of Proposition \ref{prop:BBM_case_conv} with $R = \diam \Omega$. Notably, this case completes the proof of Theorem \ref{thm:BBM_for_forms_convex}.

\begin{cor}\label{cor:BBM_case_conv_bdd}
	Let $\Omega \subset \R^n$ be a bounded, convex domain, $p, \in (1, \infty)$, $k \in \{0, \dots, n-1\}$, and let $\omega \in L^p(\wedge^k T^* \Omega)$, where we extend $\omega$ to $L^p(\wedge^k T^* \R^n)$ by setting $\omega = 0$ outside $\Omega$. 
	Then the following conditions are equivalent:
	\begin{enumerate}[label=(\roman*)]
		\item $\omega \in W^{d,p}(\wedge^{k} T^*\Omega)$; 
		\item $\norm{dI_\omega}_{L^p M^{k+1}(\Omega)} < \infty$;
		\item $[dI_\omega]_{L^p M^{k+1}(\Omega)} < \infty$. 
	\end{enumerate}
	Moreover, if $\omega \in W^{d,p}(\wedge^{k} T^*\Omega)$, then 
	\[
		[dI_\omega]_{L^p M^{k+1}(\Omega)} 
		= \norm{dI_\omega}_{L^p M^{k+1}(\Omega)} 
		= K \bigl\lVert\abs{d\omega}_{\S,p}\bigr\rVert_{L^p(\Omega)},
	\]
	where $K = K(p, n, k)$ and $\abs{\cdot}_{\S,p}$ are as in Theorem \ref{thm:equiv_of_norms_general}.
\end{cor}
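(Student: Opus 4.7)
The plan is to deduce this corollary directly from Proposition \ref{prop:BBM_case_conv} with the choice $R = \diam \Omega$, using the equality \eqref{eq:bdd_set_equality} to identify the global seminorm with the localized one. Concretely, since $\Omega$ is bounded, for any $(x_0, \dots, x_{k+1}) \in \Omega^{k+2}$ we have $|x_i - x_0| \leq \diam \Omega$ for each $i$, and the set where some $|x_i - x_0| = \diam \Omega$ is a Lebesgue null-subset of $\Omega^{k+2}$. Hence $\B(\Omega, k+1, \diam \Omega)$ agrees with $\Omega^{k+2}$ up to a null set, so the integrands in the definitions of $\norm{dI_\omega}_{L^p M^{k+1}(\Omega, \theta)}$ and $\norm{dI_\omega}_{L^p M^{k+1}(\Omega \mid \diam\Omega, \theta)}$ agree almost everywhere and the two quantities coincide for every $\theta \in (0,1)$.

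Passing to $\limsup$ and $\liminf$ as $\theta \to 1^{-}$, this immediately gives
\[
	\norm{dI_\omega}_{L^p M^{k+1}(\Omega)} = \norm{dI_\omega}_{L^p M^{k+1}(\Omega \mid \diam\Omega)}, \qquad [dI_\omega]_{L^p M^{k+1}(\Omega)} = [dI_\omega]_{L^p M^{k+1}(\Omega \mid \diam\Omega)}.
\]
Applying Proposition \ref{prop:BBM_case_conv} with $R = \diam \Omega$, which is permissible since $\Omega$ is open and convex, the equivalence of the three conditions follows, and the identity $[dI_\omega]_{L^p M^{k+1}(\Omega)} = \norm{dI_\omega}_{L^p M^{k+1}(\Omega)} = K \lVert |d\omega|_{\S,p} \rVert_{L^p(\Omega)}$ for $\omega \in W^{d,p}(\wedge^k T^*\Omega)$ transfers verbatim.

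There is no substantive obstacle here: all the work has been done in establishing Proposition \ref{prop:BBM_case_conv} and recording \eqref{eq:bdd_set_equality}, so the proof is essentially a one-line reduction. The only minor care is verifying that the null-set discrepancy between $\B(\Omega, k+1, \diam \Omega)$ and $\Omega^{k+2}$ has no effect, which is immediate from Fubini and the fact that each constraint $|x_i - x_0| = \diam \Omega$ cuts out a set of measure zero in $\Omega^{k+2}$.
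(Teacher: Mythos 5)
Your proof is correct and follows the paper's own argument exactly: the corollary is deduced from Proposition \ref{prop:BBM_case_conv} with $R = \diam \Omega$ via the identity \eqref{eq:bdd_set_equality}. The only minor remark is that the null-set discussion is unnecessary: since $\Omega$ is open, any $x_0, x_i \in \Omega$ satisfy $\abs{x_i - x_0} < \diam \Omega$ strictly, so $\B(\Omega, k+1, \diam\Omega) = \Omega^{k+2}$ exactly, which is precisely what \eqref{eq:bdd_set_equality} records.
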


We then prove the parts of Theorem \ref{thm:BBM_for_Sobolev_forms_all_cases} with non-convex $\Omega$. We again start with the unbounded version.

\begin{prop}\label{prop:BBM_case_arbitrary}
	Let $\Omega \subset \R^n$ be open, $R > 0$, $c \in (0, 1]$, $p, \in (1, \infty)$, $k \in \{0, \dots, n-1\}$, and let $\omega \in L^p(\wedge^k T^* \Omega)$, where we extend $\omega$ to $L^p(\wedge^k T^* \R^n)$ by setting $\omega = 0$ outside $\Omega$. 
	Then the following conditions are equivalent:
	\begin{enumerate}[label=(\roman*)]
		\item $\omega \in W^{d,p}(\wedge^{k} T^*\Omega)$; \label{enum:BBM_gen_sob}
		\item $\norm{dI_\omega}_{L^p M^{k+1,c}(\Omega \mid R)} < \infty$; \label{enum:BBM_gen_sup}
		\item $[dI_\omega]_{L^p M^{k+1,c}(\Omega \mid R)} < \infty$. \label{enum:BBM_gen_inf}
	\end{enumerate}
	Moreover, if $\omega \in W^{d,p}(\wedge^{k} T^*\Omega)$, then 
	\[
		[dI_\omega]_{L^p M^{k+1,c}(\Omega \mid R)} 
		= \norm{dI_\omega}_{L^p M^{k+1,c}(\Omega \mid R)} 
		= K \bigl\lVert\abs{d\omega}_{\S,p}\bigr\rVert_{L^p(\Omega)},
	\]
	where $K = K(p, n, k)$ and $\abs{\cdot}_{\S,p}$ are as in Theorem \ref{thm:equiv_of_norms_general}.
\end{prop}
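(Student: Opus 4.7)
The implication \ref{enum:BBM_gen_sup}$\Rightarrow$\ref{enum:BBM_gen_inf} is immediate. For \ref{enum:BBM_gen_sob}$\Rightarrow$\ref{enum:BBM_gen_sup} together with the final equalities, the plan hinges on the geometric observation that the assumption $c \in (0, 1]$ forces any simplex $\Delta(x_0, \ldots, x_{k+1})$ with $(x_0, \ldots, x_{k+1}) \in \B(\Omega, k+1, R) \cap \Omega(k+1, c)$ to lie inside the convex ball $\B^n(x_0, \dist(x_0, \partial\Omega)) \subset \Omega$: indeed, $\max_i \abs{x_i - x_0} < c \dist(x_0, \partial\Omega) \le \dist(x_0, \partial\Omega)$ puts all $x_i$, hence all of $\Delta$, in this ball. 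Lemma \ref{lem:AS_Stokes} applied locally on such balls therefore gives $dI_\omega = I_{d\omega}$ almost everywhere on $\B(\Omega, k+1, R) \cap \Omega(k+1, c)$, and Proposition \ref{prop:equiv_case_unbdd_alt} applied to $d\omega \in L^p(\wedge^{k+1} T^* \Omega)$ delivers the claimed chain of equalities.

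The remaining implication \ref{enum:BBM_gen_inf}$\Rightarrow$\ref{enum:BBM_gen_sob} is the main obstacle and will be handled in two stages. For the local stage I first show $\omega \in W^{d,p}_\loc(\wedge^k T^* \Omega)$: fix $x \in \Omega$, pick a ball $B = \B^n(x, r)$ with $\overline{\B^n(x, 2r)} \subset \Omega$ so that $\dist(B, \partial\Omega) \ge r$, and set $R' = \min(R, cr)$. Every tuple in $\B(B, k+1, R')$ then lies in $\B(\Omega, k+1, R) \cap \Omega(k+1, c)$, and by convexity of $B$, every $k$-simplex arising as a face of a simplex with corners in $B^{k+3}$ stays in $B$, so $dI_\omega = dI_{\omega|_B}$ on $B^{k+3}$ (with $\omega|_B$ zero-extended outside $B$). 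Hence
\[
[dI_{\omega|_B}]_{L^p M^{k+1}(B|R')} \le [dI_\omega]_{L^p M^{k+1,c}(\Omega|R)} < \infty,
\]
and Proposition \ref{prop:BBM_case_conv} applied to the convex domain $B$ yields $\omega|_B \in W^{d,p}(\wedge^k T^* B)$. Varying $x$ produces a locally $L^p$-integrable weak exterior derivative $d\omega$ of $\omega$ on $\Omega$.

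For the global stage I would exhaust $\Omega$ by open sets $E_M \subset E_{M+1}$ with $\overline{E_M} \subset \Omega$ compact and $\bigcup_M E_M = \Omega$, e.g.\ $E_M = \{x \in \Omega : \dist(x, \partial\Omega) > 1/M\} \cap \B^n(0, M)$. The crucial geometric fact is that $\partial\Omega \subset E_M^c$ forces $\dist(y, \partial E_M) \le \dist(y, \partial\Omega)$ for every $y \in E_M$, so $E_M(k+1, c) \subset \Omega(k+1, c)$; moreover, the same ball argument as above shows that simplices underlying tuples in $\B(E_M, k+1, R) \cap E_M(k+1, c)$ remain in $E_M$, so $I_{d\omega|_{E_M}} = I_{d\omega}$ on this integration domain. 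Combining this with the identity $I_{d\omega} = dI_\omega$ established in the first paragraph, the containment $\B(E_M, k+1, R) \cap E_M(k+1, c) \subset \B(\Omega, k+1, R) \cap \Omega(k+1, c)$, and Proposition \ref{prop:equiv_case_unbdd_alt} applied to $d\omega|_{E_M} \in L^p(\wedge^{k+1} T^* E_M)$, we obtain
\[
K \bigl\lVert \abs{d\omega}_{\S,p} \bigr\rVert_{L^p(E_M)}
= [I_{d\omega|_{E_M}}]_{L^p M^{k+1,c}(E_M|R)}
\le [dI_\omega]_{L^p M^{k+1,c}(\Omega|R)}
< \infty.
\]
Monotone convergence as $M \to \infty$ then yields $d\omega \in L^p(\wedge^{k+1} T^* \Omega)$, so $\omega \in W^{d,p}(\wedge^k T^* \Omega)$.
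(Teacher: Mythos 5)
Your argument is correct. The reductions \ref{enum:BBM_gen_sup}$\Rightarrow$\ref{enum:BBM_gen_inf} and \ref{enum:BBM_gen_sob}$\Rightarrow$\ref{enum:BBM_gen_sup} (together with the displayed equalities) match the paper's reasoning: both rely on the observation that $c \le 1$ forces the simplices underlying the integration domain into $\Omega$, so that Lemma~\ref{lem:AS_Stokes} gives $dI_\omega = I_{d\omega}$ a.e.\ there, and Proposition~\ref{prop:equiv_case_unbdd_alt} applied to $d\omega$ then does the rest. Your local stage of \ref{enum:BBM_gen_inf}$\Rightarrow$\ref{enum:BBM_gen_sob} is also essentially the paper's: one restricts to a small ball $B$ whose $(k+2)$-tuples within the restricted range $R'$ lie in $\B(\Omega, k+1, R) \cap \Omega(k+1, c)$, and then invokes the convex case. (Minor typo: the tuples should live in $B^{k+2}$, not $B^{k+3}$ — a $(k+1)$-simplex has $k+2$ vertices.)

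Where you genuinely diverge from the paper is the global stage of \ref{enum:BBM_gen_inf}$\Rightarrow$\ref{enum:BBM_gen_sob}. The paper selects, via the Vitali covering theorem, a countable pairwise disjoint subfamily of admissible balls $B_i$ covering $\Omega$ up to a null set, bounds $\sum_i \norm{d\omega}_{L^p(B_i)}^p$ by $\sum_i [I_{d\omega}]_{L^p M^{k+1}(B_i)}^p$, and then uses superadditivity of $\liminf$ over the disjoint domains $B_i^{k+2}$. You instead exhaust $\Omega$ by nested open sets $E_M \nearrow \Omega$, verify the geometric containments $E_M(k+1, c) \subset \Omega(k+1, c)$ and $\Delta(x_0,\dots,x_{k+1}) \subset E_M$ for admissible tuples, apply Proposition~\ref{prop:equiv_case_unbdd_alt} to $d\omega|_{E_M} \in L^p(\wedge^{k+1} T^* E_M)$ (which is legitimate since $\omega \in W^{d,p}_\loc$ and $\overline{E_M}$ is compact in $\Omega$), and finish by monotone convergence. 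Both arguments are sound and of comparable length; your exhaustion route has the minor advantage of sidestepping the Vitali covering theorem and the superadditivity manipulation, at the cost of the extra containment $\dist(y, \partial E_M) \le \dist(y, \partial\Omega)$ — which you correctly justify via the segment/connectivity argument.
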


\begin{proof}
	We again trivially have that \ref{enum:BBM_gen_sup} implies \ref{enum:BBM_gen_inf}. We start by showing that \ref{enum:BBM_gen_inf} implies \ref{enum:BBM_gen_sob}. Thus, suppose that $\omega \in L^p(\wedge^k T^* \Omega)$ with  $[dI_\omega]_{L^p M^{k+1,c}(\Omega\mid R)} < \infty$. Let $x \in \Omega$, and let $B$ be a ball of the form
	\[
		B = \B^n(x, r), \quad \text{where } 0 < r < \min \left( \frac{c}{4}, \frac{1}{2}, \frac{R}{2} \right) \dist(x, \partial \Omega).
	\] 
	Note that since $c > 0$, we have $r \in (0, 1]$. Thus, $B$ is a neighborhood of $x$, and $B \subset \Omega$. 
	
	We first claim that $B^{k+2} \subset \B(\Omega, k+1, R) \cap \Omega(k+1, c)$. For this, let $(x_0, \dots, x_{k+1}) \in B^{k+2}$. For every $i \in \{1, \dots, k+1\}$, we have
	\[
		\abs{x_0 - x_i} \le \abs{x_0 - x} + \abs{x - x_i} < \min \left(R,  \frac{c}{2} \dist(x, \partial \Omega)\right).
	\] 
	In particular, $B^{k+2} \subset \B(\Omega, k+1, R)$. In addition to this, we have 
	\[
		\dist(x_0, \partial \Omega) \ge \dist(x, \partial \Omega) - \abs{x - x_0} >
		\frac{1}{2}\dist(x, \partial \Omega).
	\]
	Thus, $\abs{x_0 - x_i} < c \dist(x_0, \partial \Omega)$ for all $i \in \{1, \dots, k\}$, completing the proof that  $B^{k+2} \subset \B(\Omega, k+1, R) \cap \Omega(k+1, c)$.
	
	Now, we have
	\[
		\norm{dI_\omega}_{L^p M^{k+1}(B)} \le \norm{dI_\omega}_{L^p M^{k+1, c}(\Omega\mid R)} < \infty.
	\]
	Thus, by Corollary \ref{cor:BBM_case_conv_bdd} and the convexity of $B$, we have $\omega \in W^{d,p}(\wedge^k T^* B_x)$. Since the sets $B$ as above form an open cover of $\Omega$, it follows that $\omega \in W^{d,p}_\loc(\wedge^k T^* \Omega)$. 
	
	Moreover, the sets $B$ form a Vitali covering of $\Omega$, and we may thus use the Vitali covering theorem to select a countable pairwise disjoint subcollection $B_i$ that covers $\Omega$ up to a nullset. By Proposition \ref{prop:BBM_case_conv}, we conclude that
	\begin{equation*}
		\norm{d\omega}_{L^p(\Omega)}^p 
		= \sum_i \norm{d\omega}_{L^p(B_i)}^p
		\lesssim_{p,n,k} \sum_i [I_{d\omega}]_{L^p M^{k+1}(B_i)}^p
	\end{equation*}
	Now, since $\liminf$ is superadditive, and since the balls $B_i$ are disjoint, we have 
	\begin{multline*}
		\sum_i [I_{d\omega}]_{L^p M^{k+1}(B_i)}^p
		\le \liminf_{\theta \to 1^{-}} \sum_i \norm{I_{d\omega}}_{L^p M^{k+1}(B_i, \theta)}^p\\
		= \liminf_{\theta \to 1^{-}} \int_{\bigcup_i B_i^{k+2}} \frac{(1 - \theta)^k \abs{I_{d\omega}(x_0, \dots, x_{k+1})}^p}{\left(\abs{x_1 - x_0} \cdots \abs{x_{k+1} - x_0}\right)^{n + \theta p}} \, dx_0 \dots dx_{k+1}.
	\end{multline*}
	Now, since all $B_i$ are convex subsets of $\Omega$, Lemma \ref{lem:AS_Stokes} applies on $\bigcup_i B_i^{k+2}$. By combining this with the fact that $B_i^{k+2} \subset \B(\Omega, k+1, R) \cap \Omega(k+1, c)$ for all $i$, we obtain
	\begin{multline*}
		\liminf_{\theta \to 1^{-}} \int_{\bigcup_i B_i^{k+2}} \frac{(1 - \theta)^k \abs{I_{d\omega}(x_0, \dots, x_{k+1})}^p}{\left(\abs{x_1 - x_0} \cdots \abs{x_{k+1} - x_0}\right)^{n + \theta p}} \, dx_0 \dots dx_{k+1}\\
		= \liminf_{\theta \to 1^{-}} \int_{\bigcup_i B_i^{k+2}} \frac{(1 - \theta)^k \abs{dI_{\omega}(x_0, \dots, x_{k+1})}^p}{\left(\abs{x_1 - x_0} \cdots \abs{x_{k+1} - x_0}\right)^{n + \theta p}} \, dx_0 \dots dx_{k+1}\\
		\le [dI_{\omega}]_{L^p M^{k+1}(\Omega\mid R)}^p < \infty.
	\end{multline*}
	Thus, we have $\norm{d\omega}_{L^p(\Omega)}^p < \infty$, completing the proof that \ref{enum:BBM_gen_inf} implies \ref{enum:BBM_gen_sob}.
	
	It remains to prove that \ref{enum:BBM_gen_sob} implies \ref{enum:BBM_gen_sup}, along with the other claims at the end of the statement. Thus, suppose that $\omega \in W^{d,p}(\wedge^k T^* \Omega)$. Let $(x_0, \dots, x_{k+1}) \in \Omega(c, k)$. By the definition of $\Omega(c, k)$, all the coordinates $x_i$ are contained in $\B^n(x_0, c\dist(x_0, \partial \Omega))$. Since $\B^n(x_0, c\dist(x_0, \partial \Omega))$ is convex, it follows that $\Delta(x_0, \dots, x_{k+1}) \subset \B^n(x_0, c\dist(x_0, \partial \Omega))$. Moreover, since $c \le 1$, we have $\B^n(x_0, c\dist(x_0, \partial \Omega)) \subset \Omega$, and therefore $\Delta(x_0, \dots, x_{k+1}) \subset \Omega$. 
	
	Thus, Lemma \ref{lem:AS_Stokes} applies in all of $\Omega(c, k)$. We hence have $dI_\omega = I_{d\omega}$ a.e.\ in $\Omega(c, k)$, and thus also a.e.\ in $\B(\Omega, k+1, R) \cap \Omega(c, k)$. Now, it remains to apply Proposition \ref{prop:equiv_case_unbdd_alt}, with which we obtain
	\[
		\norm{dI_\omega}_{L^p M^{k+1,c}(\Omega\mid R)}
		= \norm{I_{d\omega}}_{L^p M^{k+1,c}(\Omega\mid R)}
		= K \bigl\lVert\abs{d\omega}_{\S,p}\bigr\rVert_{L^p(\Omega)} < \infty.
	\]
	The same application of Lemma \ref{lem:AS_Stokes} and Proposition \ref{prop:equiv_case_unbdd_alt} also yields
	\[
		[dI_\omega]_{L^p M^{k+1,c}(\Omega\mid R)}
		= [I_{d\omega}]_{L^p M^{k+1,c}(\Omega\mid R)}
		= K \bigl\lVert\abs{d\omega}_{\S,p}\bigr\rVert_{L^p(\Omega)},
	\]
	completing the proof.
\end{proof}

We again get the last case of Theorem \ref{thm:BBM_for_Sobolev_forms_all_cases} as a trivial consequence of the previous proposition with $R = \diam \Omega$ and \eqref{eq:bdd_set_equality_alt}.

\begin{prop}\label{cor:BBM_case_bdd}
	Let $\Omega \subset \R^n$ be open and bounded, $c \in (0, 1]$, $p, \in (1, \infty)$, $k \in \{0, \dots, n-1\}$, and let $\omega \in L^p(\wedge^k T^* \Omega)$, where we extend $\omega$ to $L^p(\wedge^k T^* \R^n)$ by setting $\omega = 0$ outside $\Omega$. 
	Then the following conditions are equivalent:
	\begin{enumerate}[label=(\roman*)]
		\item $\omega \in W^{d,p}(\wedge^{k} T^*\Omega)$; 
		\item $\norm{dI_\omega}_{L^p M^{k+1,c}(\Omega)} < \infty$; 
		\item $[dI_\omega]_{L^p M^{k+1,c}(\Omega)} < \infty$. 
	\end{enumerate}
	Moreover, if $\omega \in W^{d,p}(\wedge^{k} T^*\Omega)$, then 
	\[
		[dI_\omega]_{L^p M^{k+1,c}(\Omega)} 
		= \norm{dI_\omega}_{L^p M^{k+1,c}(\Omega)} 
		= K \bigl\lVert\abs{d\omega}_{\S,p}\bigr\rVert_{L^p(\Omega)},
	\]
	where $K = K(p, n, k)$ and $\abs{\cdot}_{\S,p}$ are as in Theorem \ref{thm:equiv_of_norms_general}.
\end{prop}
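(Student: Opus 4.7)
The plan is to deduce this as an immediate corollary of Proposition \ref{prop:BBM_case_arbitrary} by specializing to $R = \diam \Omega$. Since $\Omega$ is bounded, for any $(x_0, \dots, x_{k+1}) \in \Omega^{k+2}$ and any $i \in \{1, \dots, k+1\}$ we have $\abs{x_0 - x_i} \le \diam \Omega$, so the ``bounded length'' restriction imposed by $\B(\Omega, k+1, \diam \Omega)$ is vacuous. Consequently $\B(\Omega, k+1, \diam\Omega) \cap \Omega(k+1, c) = \Omega(k+1, c)$.

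Concretely, I would first invoke the fixed-$\theta$ identity \eqref{eq:bdd_set_equality_alt} applied with $E = \Omega$ and $F = dI_\omega$, which gives
\[
\norm{dI_\omega}_{L^p M^{k+1, c}(\Omega, \theta)} = \norm{dI_\omega}_{L^p M^{k+1, c}(\Omega \mid \diam \Omega, \theta)}
\]
for every $\theta \in (0, 1)$. Passing to $\limsup$ and to $\liminf$ as $\theta \to 1^{-}$, this identity upgrades to
\[
\norm{dI_\omega}_{L^p M^{k+1, c}(\Omega)} = \norm{dI_\omega}_{L^p M^{k+1, c}(\Omega \mid \diam \Omega)}
\]
and the analogous equality for the liminf versions $[dI_\omega]_{L^p M^{k+1, c}(\Omega)}$ and $[dI_\omega]_{L^p M^{k+1, c}(\Omega \mid \diam \Omega)}$.

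With these identifications in hand, the three conditions \ref{enum:BBM_gen_sob}--\ref{enum:BBM_gen_inf} of Proposition \ref{prop:BBM_case_arbitrary} (applied with $R = \diam \Omega$, which is a legitimate positive radius because $\Omega$ is bounded and nonempty) translate verbatim into the three conditions of the present statement. Likewise, the quantitative identity in the ``moreover'' part of Proposition \ref{prop:BBM_case_arbitrary} transfers directly to
\[
[dI_\omega]_{L^p M^{k+1,c}(\Omega)} = \norm{dI_\omega}_{L^p M^{k+1,c}(\Omega)} = K \bigl\lVert \abs{d\omega}_{\S,p} \bigr\rVert_{L^p(\Omega)},
\]
with the same constant $K = K(p, n, k)$.

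There is no genuine obstacle here: the argument is purely a reduction via the bounded-diameter identity \eqref{eq:bdd_set_equality_alt}. The only point worth spelling out carefully is that Proposition \ref{prop:BBM_case_arbitrary} already assumes $\Omega$ open (not bounded), so no hypothesis is lost, and that taking $R = \diam \Omega$ is permissible since $\diam \Omega \in (0, \infty)$ under our hypotheses. All other content of the statement is inherited directly from Proposition \ref{prop:BBM_case_arbitrary}.
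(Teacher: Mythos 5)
Your proposal is correct and follows exactly the paper's own reasoning: the paper states that this proposition is ``a trivial consequence of the previous proposition with $R = \diam \Omega$ and \eqref{eq:bdd_set_equality_alt},'' which is precisely the reduction you spell out. Nothing is missing.
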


Thus, by combining Propositions \ref{prop:BBM_case_conv} and \ref{prop:BBM_case_arbitrary} with Corollaries \ref{cor:BBM_case_conv_bdd} and \ref{cor:BBM_case_bdd}, the proof of Theorem \ref{thm:BBM_for_Sobolev_forms_all_cases} is complete.

It remains to prove Theorem \ref{thm:BBM_for_Sobolev_forms_nonconvex}, which is a relatively straightforward corollary of the results shown so far. We first recall the statement for convenience of the reader.

\BBMFormsNonconv*

\begin{proof}
	For \ref{enum:BBM_to_Sobolev}, if $[dI_\omega]_{L^p M^{k+1}(\Omega)}$ is finite, then we have
	\[
		[dI_\omega]_{L^p M^{k+1, c}(\Omega)} \le [dI_\omega]_{L^p M^{k+1}(\Omega)} < \infty.
	\]
	Thus, the claim follows immediately from Theorem \ref{thm:BBM_for_Sobolev_forms_all_cases}.
	For \ref{enum:Sobolev_to_BBM}, let $\omega \in W^{d,p}(\wedge^k T^* \Omega)$, and let $\omega_{\ext} \in W^{d,p}(\wedge^k T^* B)$ be a $W^{d,p}$-extension of $\omega$. Since $B$ is convex, we may thus use Theorem \ref{thm:equiv_of_norms_general} and \ref{thm:BBM_for_Sobolev_forms_all_cases} to obtain that
	\[
		\norm{dI_{\omega_{\ext}}}_{L^p M^{k+1}(\Omega)}
		\le \norm{dI_{\omega_{\ext}}}_{L^p M^{k+1}(B)}
		\lesssim_{p,n,k} \norm{d\omega_{\ext}}_{L^p(B)}
		< \infty,
	\]
	completing the proof of \ref{enum:Sobolev_to_BBM}.
\end{proof}


\bibliographystyle{abbrv}
\bibliography{sources}

\end{document}